\newcommand\R{\mathbb R}
\newcommand\T{\mathbb T}
\newcommand\N{\mathbb N}
\newcommand\E{\mathbb E}
\newcommand\e{\varepsilon}
\newcommand\p{\mathbb P}
\newcommand\W{\mathcal W}
\newcommand\G{\mathcal G}
\newcommand\EE{\mathcal E}
\newcommand\s{\mathcal S}
\newcommand\U{\mathcal U}
\newcommand\HH{\mathcal H}
\newcommand\LL{\mathcal L}
\newcommand\Wlip{W^{1,\infty}}
\newcommand\X{\mathcal X}
\newcommand{\norm}[1]{\left\lVert#1\right\rVert}
\theoremstyle{plain}
\numberwithin{equation}{section}
\newtheorem{Theorem}{Theorem}[section]
\newtheorem{Proposition}{Proposition}[section]
\newtheorem{Lemma}{Lemma}[section]
\newtheorem{Definition}{Definition}[section]
\theoremstyle{definition}
\newtheorem{Remark}{Remark}[section]
\newtheorem{Assumption}{Assumption}
\newcommand{\abs}[1]{\lvert#1\rvert}
\begin{document}

	\title[On a stochastic non-local transport equation]{Global existence and blow-up
		for a stochastic transport equation with non-local velocity}
	
	\author[D. Alonso-Or\'{a}n]{Diego Alonso-Or\'{a}n}
\address{Departamento de An\'{a}lisis Matem\'{a}tico, Universidad de La Laguna, C/ Astrof\'{i}sico Francisco S\'{a}nchez s/n, 38271, Spain}
\email{dalonsoo@ull.edu.es}
	\author[Y. Miao]{Yingting Miao}
	\address{Department of Mathematics, South China University of Technology, Guangzhou, Guangdong 510640, PR China}
	\email{yingtmiao2-c@my.cityu.edu.hk}

	\author[H. Tang]{Hao Tang}
	\address{Department of Mathematics, University of Oslo, P.O. Box 1053\\
	 Blindern, N-0316 Oslo, Norway}
	
	\email{haot@math.uio.no}

	\thanks{D.~Alonso-Or\'{a}n is supported by the Spanish MINECO through Juan de la Cierva fellowship FJC2020-046032-I. The major part of this work was carried out when  D.~Alonso-Or\'{a}n and H.~Tang where supported by the Alexander von Humboldt Foundation.}

	\subjclass[2020]{Primary: 60H15, 35Q51;  Secondary: 35A01, 35B44.}
	
	\date{\today}
	
	
	\keywords{Stochastic evolution equations; Pathwise solution; Blow-up criterion; Noise prevents blow-up; Weak instability.}
	
	\begin{abstract}
		In this paper we investigate a non-linear and non-local one dimensional transport equation under random perturbations on the real line. We first establish a local-in-time theory, i.e., existence, uniqueness and blow-up criterion for pathwise solutions in Sobolev spaces $H^{s}$ with $s>3$. Thereafter, we give a complete picture of the long time behavior of the solutions based on the type of noise we consider. On one hand, we identify a family of noises such that blow-up can be prevented with probability $1$, guaranteeing the existence and uniqueness of global solutions almost surely. On the other hand, in the particular linear noise case, we show that singularities occur in finite time with positive probability, and we derive lower bounds of these probabilities. To conclude, we introduce the notion of stability of exiting times and show that one cannot improve the stability of the exiting time and simultaneously improve the continuity of the dependence on initial data.
	\end{abstract}
	
	\maketitle
	
	%
	
	\tableofcontents

	\section{Introduction and main results}
	Consider the following one dimensional non-local transport equation 
	\begin{equation} \label{CCF}
		u_t+(\HH u)u_x=0,
	\end{equation}
	where $\mathcal{H}$ denotes the Hilbert transform.
	This equation first appears in the literature due to its analogy with the Birkhoff-Rott equations describing the evolution of vortex sheets with surface tension \cite{BLM, Morlet}. Moreover, \eqref{CCF} can be also viewed as a toy-model of the two-dimensional surface quasi-geostrophic equation (SQG) which describes the evolution of the potential temperature in a rapidly rotating stratified fluid with uniform potential vorticity \cite{HPGS95,Ped}.  A striking result showing the finite time blow-up of classical solutions to \eqref{CCF} for a generic class of smooth initial data was first obtained by C\'ordoba, C\'ordoba and Fontelos in \cite{Cordoba-etal-2005-Annals} by means of complex analysis techniques. After that breakthrough, equation \eqref{CCF} is known as the CCF equation. Subsequent works have shown finite blow-up avoiding complex analysis approach (cf. \cite{Kiselev-1,Silvestre-Vicol-TAMS}). In particular, in the later Silvestre and Vicol provided four elegant and simple real analysis proofs of the blow-up phenomena.

	In this paper, we are interested in stochastic variants of the CCF equation \eqref{CCF}. Indeed, the introduction of stochasticity into ideal fluid dynamics has received special attention over the past two decades. The inclusion of stochastic noise can be
	a way of representing model uncertainty and turbulence.  For example, in weather forecasting, phenomena as cloud formation
	is to this day poorly understood and the inclusion of stochastic noise has become an essential tool for gaining better understanding about it.  Since the pioneering work of Holm in \cite{Holm-2015-ProcA}, where  a variational approach for introducing noise in equations in a fashion that respects the geometry of the system is developed, the literature regarding the analysis of non-linear stochastic partial differential equations with transport type noise has increase substantially (cf. \cite{Alonso-etal-2019-NODEA, Alonso-Bethencourt-2020-JNS,Crisan-Flandoli-Holm-2018-JNS,Crisan-Lang-2019,Flandoli-Luo-2019} and the references therein). To the best of the authors knowledge, there are very few results regarding the CCF model under random perturbations. Only recently,  by applying an abstract framework for singular stochastic partial differential equations (SPDEs) derived by two of the authors, cf. \cite{Alonso-Rohde-Tang-2021-JNLS}, the local existence, uniqueness and blow-up criterion of pathwise solutions to \eqref{CCF} with  transport noise
	has been addressed in the periodic setting, i.e for $x\in\mathbb{T}=\R/2\pi\mathbb Z$.

	To extend the theory developed in \cite{Alonso-Rohde-Tang-2021-JNLS} to the real line case, i.e., $x\in\R$, and to study the noise effect, in this paper we will consider the following stochastic CCF model
	\begin{equation} \label{SCCF problem}
		\left\{
		\begin{aligned}
			&{\rm d}u+(\HH u)u_x{\rm d}t=h(t,u){\rm d}\W,\quad x\in\R, \ t>0,\\
			&u(\omega,0,x)=u_0(\omega,x),\quad x\in\R,
		\end{aligned} 
		\right.
	\end{equation}
	where $\W$ is a cylindrical Wiener process and $h$ is a non-linear function.  In this work, under some natural assumptions collected in Assumption \ref{Assumption-1}, we obtain the local existence, uniqueness and a blow-up criterion of  pathwise solutions to  \eqref{SCCF problem}. The detailed result is stated in Theorem \ref{Local pathwise solution}. Here we notice that classical probabilistic compactness arguments cannot be directly applied to deal with the whole space $\mathbb{R}$ and new ideas are in order, see Remark \ref{remark:Th1} for a more precise explanation.
	
	It is well-known that noise effect is one of the crucial subjects in the study of SPDEs. Indeed, regularization effects due to noise have been observed for various models and different settings. 
	For example, it is known that the well-posedness of linear stochastic transport equation with noise can be established under weaker hypotheses than its deterministic counterpart and restore uniqueness of solutions, cf. \cite{Fedrizzi-Flandoli-2013-JFA,Flandoli-Gubinelli-Priola-2010-Invention}. Regularization effects caused by the noise on flux for stochastic scalar conservation laws have been studied in \cite{Gess-Souganidis-2017-CPAM}. Extensions of the previous works for the
	stochastic transport and continuity equations to $k$-forms has been recently addressed in \cite{Bethencourt-Takao-2019}. In terms of numerical simulations, the regularization effects of noise can be found in \cite{Kroker-Rohde-2012-ANM}. Moreover, for different fluid models with linear multiplicative noise, we refer to \cite{GlattHoltz-Vicol-2014-AP,Kim-2010-JFA,Tang-2018-SIMA} where noise provides a damping effect on the pathwise behavior of solutions.

	%

	Compared to	the deterministic counterpart of \eqref{SCCF problem}, i.e., \eqref{CCF},
	in this artcicle we focus on the 
	following issues for the problem \eqref{SCCF problem} regarding the noise effect: 
	\begin{itemize}
		\item Noise versus finite time blow-up;
		\item Noise versus dependence on initial data.
	\end{itemize}
	
	\subsection{Noise versus finite time blow-up} 	
	In this direction, we attempt to answer the following two important questions:
	\begin{enumerate}[label={\textbf{(Q-\arabic*)}}]
		\item\label{Q:noise global solution} What kind of noise can prevent blow-up?
		\item\label{Q:noise blow up} If blow-up may occur, what is the corresponding probability?
	\end{enumerate}
	We remark here that most previous results in the literature on regularization by noise are restricted to linear equations or linear noises. For instance, we refer to \cite{Fedrizzi-Flandoli-2013-JFA, Flandoli-Gubinelli-Priola-2010-Invention, FNO-18, Kim-2010-JFA, NO-15-NODEA} for linear transport equations, and to \cite{Flandoli-Gubinelli-Priola-2011-SPTA,GlattHoltz-Vicol-2014-AP,Rockner-Zhu-Zhu-2014-SPTA,Tang-2018-SIMA} for linear noise. Therefore, for non-linear SPDEs, it is very natural to analyze the validity of the regularization effects by non-linear noise. Indeed, searching for nonlinear noise such that blow-up can be prevented is important
because it helps us to understand the regularization mechanisms of noise, and this is the main motivation to study question \ref{Q:noise global solution}.   Actually, even in the case of non-linear equations with linear noise, the noise effects are complicated because there are both, examples in positive direction, i.e., noises can  regularize singularities, and negative direction, i.e., noises cannot regularize singularities. For example, for the stochastic 2D Euler equations, coalescence of vortices disappears  (see \cite{Flandoli-Gubinelli-Priola-2011-SPTA}) but noise cannot prevent the formation of shocks in the Burgers' equation (see \cite{Alonso-etal-2019-NODEA,Flandoli-2011-book}). 
	
	For simplicity, we set \ref{Q:noise global solution} in the framework where $h(t,u)\,{\rm d}\W
	=\alpha(t,u)\, {\rm d}W$, with $W$ a standard 1-D Brownian motion and $\alpha$ a non-linear function. We then focus on the system
	\begin{equation}\label{SCCF non blow up Eq}
		\left\{\begin{aligned}
			&{\rm d}u+(\HH u)u_x\, {\rm d}t
			= \alpha(t,u)\, {\rm d}W,\quad x\in\R, \ t>0,\\
			&u(\omega,0,x)=u_{0}(\omega,x), \qquad  x\in \R.
		\end{aligned} \right.
	\end{equation}
	We will show in Theorem \ref{Non blowup} that if $\alpha(t,\cdot)$ grows fast enough, then global existence of pathwise solutions holds true with probability $1$. This is strongly in contrast with its deterministic counterpart where the breakdown of classical solutions to \eqref{CCF} with generic smooth initial data occurs, cf. \cite{Cordoba-etal-2005-Annals}. Hence we justify the idea that fast growing  non-linear noise (\textit{strong} noise) has regularization effects on the solutions in terms of preventing singularities.
	
	 	By Theorem \ref{Non blowup}, we have identified a family of noises that can prevent blow-up, and this partially answers \ref{Q:noise global solution}.
	 Next, we
	will pay our attention to the  case that blow-up may occur. Indeed, as a toy-model for the 2D surface quasi-geostrophic equation (SQG)(and hence for the 3D incompressible Euler equation), analyzing the possible blow-up of solutions is 
	is one of the central questions in the study of  
	non-local transport type equations, cf. \cite{Cordoba-etal-2005-Annals,Dong-2008-JFA, Kiselev-1,Li-Rodrigo-2008,Silvestre-Vicol-TAMS}. In this paper we are also interested in identifying the possible formation of singularities in finite time and estimating its probability, hence given a partial answer to \ref{Q:noise blow up}. 
	Since Theorem \ref{Non blowup} shows that fast enough growing noises can prevent blow-up, it is natural to ponder that singularities can only occur when the noise is somehow weak. Indeed, in contrast to the fast growing noise, we will show that in the case of linear noise (\textit{weak} noise) given by equation
	\begin{equation} \label{linear:SCCF:problem}
		\left\{
		\begin{aligned}
			&{\rm d}u+(\HH u)u_x{\rm d}t=b(t)u\, {\rm d}W,\quad x\in\R, \ t>0,\\
			&u(\omega,0,x)=u_0(\omega,x),\quad x\in\R,
		\end{aligned} 
		\right.
	\end{equation}
	where $b$ is some continuous function and $W$ is a standard 1-D Brownian motion, finite time blow-up cannot be prevented and finite time singularities occur. The precise statement of this result is given in Theorem \ref{linear:theorem:blowup}.  
	
	\subsection{Noise versus dependence on initial data}
	
	Now we turn to the problem of noise effect on the initial-data dependence. There are very few results concerning the noise effect in the direction of dependence on initial data. In this work we will partially answer the following question:
	\begin{enumerate}[label={\textbf{(Q-3)}}]
		\item\label{Q:noise dependence} Whether and how noise can affect the initial-data dependence?
	\end{enumerate}
	The main motivation to consider question \ref{Q:noise dependence} relies on the following observation. 
	On the one hand, regularization provided by noise may look related to regularization effects induced by an additional dissipative term (a Laplacian). On the other hand, if one would add a real Laplacian to the governing equations, parabolic techniques may be used to improve the continuity of the initial-data dependence. For example, in the deterministic incompressible Euler equations, the solution map $u_0\mapsto u$ cannot be better than continuous \cite{Himonas-Misiolek-2010-CMP} but for the deterministic incompressible Navier-Stokes equations with sufficiently large viscosity, it is at least Lipschitz continuous in sufficiently high Sobolev spaces (see pp. 79--81 in \cite{Henry-1981-book}). In the deterministic setting, similar questions regarding the continuity map on the initial-data have been widely investigated for various non-linear dispersive and integrable equations of which we only mention a few related results. Koch and Tzvetkov \cite{Koch-Tzvetkov-2005-IMRN} proved that the solution map of the {B}enjamin--{O}no equation cannot be uniform continuous. For Camassa--Holm type equations, we refer to \cite{Himonas-Kenig-2009-DIE,Himonas-Kenig-Misiolek-2010-CPDE}  for the non-uniform dependence on initial data in Sobolev spaces $H^s$ with $s>3/2$. Similar results in Besov spaces $B^{s}_{p,q}$ first appear in \cite{Tang-Zhao-Liu-2014-AA,Tang-Shi-Liu-2015-MM}, where the critical index $s$ can be also covered. In the case of the SQG system, we refer the reader to \cite{Inci-2018-JDE}. In the stochastic setting, the interplay between regularization provided by noise and the dependence on initial conditions is first studied in \cite{Rohde-Tang-2020-JDDE,Tang-2020-arxiv}. 
	
	In this article we consider question \ref{Q:noise dependence} for \eqref{SCCF problem}. More precisely, we first recall the concept of stability of the exiting time as in \cite{Rohde-Tang-2020-JDDE,Tang-2020-arxiv}. Roughly speaking, this notion refers to the continuous changes of the point in time with respect to the initial condition, where such point is defined as the time when the solution leaves a certain range, see Definition \ref{Definition stability of exiting time} below. Later on, in Theorem \ref{Weak instability}, we show that when $h(t,u)$ satisfies certain conditions (see Assumption \ref{Assumption-2}), the multiplicative noise cannot improve the stability of the exiting time, and, at the same time, improve the continuity of the map $u_0\mapsto u$ defined by \eqref{SCCF problem}.

	\subsection{Notations, definitions and hypotheses}

	We now introduce some notations. 
	$L^2(\R)$ is the usual space of square--integrable functions  on $\R$. For $s\in\R$,  $D^s=(1-\partial_{xx}^2)^{s/2}$ is defined by
	$\widehat{D^sf}(\xi)=(1+\xi^2)^{s/2}\widehat{f}(\xi)$, where $\widehat{g}$ is the Fourier transform of $g$. The Sobolev space $H^s(\R)$ is defined as
	\begin{align*}
		H^s(\R)\triangleq\left\{f\in L^2(\R):\|f\|_{H^s(\R)}^2=\int_\R(1+\xi^2)^s|\widehat{f}(\xi)|^2<+\infty\right\},
	\end{align*}
	in which the inner product $(f,g)_{H^s}$ is given by
	$$(f,g)_{H^s}\triangleq\int_\R(1+\xi^2)^s\widehat{f}(\xi)\cdot\overline{\widehat{g}}(\xi)=(D^sf,D^sg)_{L^2}.$$
	When the function space refers to  $\R$, we will drop $\R$ if there is no ambiguity. $x \lesssim y$ ($x \gtrsim y$) means that
	$x\le c y$ ($x\ge c y$) holds for some universal \textit{deterministic} constant $c$. Such constant may differ from line to line.
	For linear operators $A$ and $B$, the commutator $[A,B]$ is defined by $[A,B]=AB-BA$.

	The triplet $(\Omega, \mathcal{F},\p)$ denotes a complete probability space, where $\p$ is a probability measure on $\Omega$ and $\mathcal{F}$ is a $\sigma$-algebra. $\E X$ is the mathematical expectation
	of $X$ with respect to $\p$. Let
	$\W(t)=\W(\omega,t),\omega\in\Omega$ be a cylindrical Wiener process.  More precisely, we consider a separable Hilbert space $\U$ and let $\{e_k\}$  be a complete orthonormal basis of $\U$. 
	Then we define
	\begin{equation*}
		\W\triangleq\sum_{k=1}^\infty W_ke_k,
	\end{equation*}
	where  $\{W_k\}_{k\geq1}$ is a sequence of mutually independent standard one-dimensional Brownian motions. 
	We call
	$\s=(\Omega, \mathcal{F},\p,\{\mathcal{F}_t\}_{t\geq0}, \W)$ a stochastic basis, where $\{\mathcal{F}_t\}_{t\geq0}$ is a right-continuous filtration endowed on $(\Omega, \mathcal{F},\p)$ such that $\{\mathcal{F}_0\}$
	contains all the $\p$-negligible subsets. 
	
	$\LL_2(\U; \X)$ stands for the set of Hilbert-Schmidt operators from $\U$ to another Hilbert space $\X$.  	For a predictable process $Z\in \LL_2(\U; \X)$,
	\begin{equation*}
		\int_0^t Z{\rm d}\W\triangleq\sum_{k=1}^\infty\int_0^t Z e_k{\rm d}W_k
	\end{equation*}
	is a well-defined $\X$-valued continuous
	square integrable martingale, see \cite{Prato-Zabczyk-2014-Cambridge,Prevot-Rockner-2007-book} for more details. 
	In the sequel of the paper, when a stopping time is defined, we set $\inf\emptyset\triangleq\infty$ by convention.
	%
	%

	We now  give the precise notion of a pathwise solution to \eqref{SCCF problem}.
	
	\begin{Definition}[Pathwise solutions]\label{pathwise solution definition}
		Let $\s=(\Omega, \mathcal{F},\p,\{\mathcal{F}_t\}_{t\geq0}, \W)$ be a fixed stochastic basis. Let $s>3/2$ and $u_0$ be an $H^s$-valued $\mathcal{F}_0$-measurable random variable.
		\begin{enumerate}
			\item A local pathwise solution to \eqref{SCCF problem} is a pair $(u,\tau)$, where $\tau$ is a stopping time satisfying $\p\{\tau>0\}=1$ and
			$u:\Omega\times[0,\infty]\rightarrow H^s$  is an $\mathcal{F}_t$-predictable $H^s$-valued process satisfying
			\begin{equation*}
				u(\cdot\wedge \tau)\in C([0,\infty);H^s)\ \ \p-a.s.,
			\end{equation*}
			and for all $t>0$,
			\begin{equation*} 
				u(t\wedge \tau)-u(0)+\int_0^{t\wedge \tau}
				(\HH u)u_x{\rm d}t'
				=\int_0^{t\wedge \tau}h(t',u){\rm d}\W\ \ \p-a.s.
			\end{equation*}
			\item The local pathwise solutions are said to be pathwise unique, if given any two pairs of local pathwise solutions $(u_1,\tau_1)$ and $(u_2,\tau_2)$ with $\p\left\{u_1(0)=u_2(0)\right\}=1,$ we have
			\begin{equation*}
				\p\left\{u_1(t,x)=u_2(t,x),\ \forall\ (t,x)\in[0,\tau_1\wedge\tau_2]\times \R\right\}=1.
			\end{equation*}
			\item Additionally, $(u,\tau^*)$ is called a maximal pathwise solution to \eqref{SCCF problem} if $\tau^*>0$ almost surely and if there is an increasing sequence $\tau_n\rightarrow\tau^*$ such that for any $n\in\N$, $(u,\tau_n)$ is a pathwise solution to \eqref{SCCF problem} and on the set $\{\tau^*<\infty\}$,
			\begin{equation*} 
				\sup_{t\in[0,\tau_n]}\|u\|_{H^s}\geq n.
			\end{equation*}
			\item If $(u,\tau^*)$ is a maximal pathwise solution and
			$\tau^*=\infty$ almost surely, then we say that the pathwise solution exists globally.
		\end{enumerate}
	\end{Definition}
Inspired by \cite{Rohde-Tang-2020-JDDE,Tang-2020-arxiv}, we introduce the concept on 
	stability of  exiting time in Sobolev spaces. Exiting time, as the name suggests, describes the first time that the solution leaves a given range. More precisely,
	
	\begin{Definition}[Stability of exiting time]\label{Definition stability of exiting time}
		Let $s>3/2$ and $\s=(\Omega, \mathcal{F},\{\mathcal{F}_t\}_{t\geq0},\p, \W)$ be a fixed stochastic basis. Let $u_0\in L^2(\Omega;H^s)$ and $\{u_{0,n}\}\subset L^2(\Omega;H^s)$ be $\mathcal{F}_0$-measurable. For each $n$, let $u$ and $u_n$ be the unique solutions to \eqref{SCCF problem}, as in Definition \ref{pathwise solution definition}, with initial values $u_0$ and $u_{0,n}$, respectively. For any $R>0$, define the $R$-exiting times as
		\begin{equation}
			\tau_n^R\triangleq\inf\{t\geq 0:\|u_n\|_{H^s}>R\},\ \ \tau^R\triangleq\inf\{t\geq 0:\|u\|_{H^s}>R\}.
		\end{equation}
		Then we define the following properties on stability:
		\begin{enumerate}
			\item If $u_{0,n}\rightarrow u_0$ in $H^{s}$ $\p-a.s.$ implies that
			\begin{align}
				\lim_{n\rightarrow\infty}\tau^R_{n}=\tau^R\ \  \p-a.s.,\label{hitting time convergence}
			\end{align}
			then the $R$-exiting time of $u$ is said to be stable.
			\item If $u_{0,n}\rightarrow u_0$ in $H^{s'}$ for all $s'<s$ almost surely,  implies that \eqref{hitting time convergence} holds true,
			the $R$-exiting time of $u$ is said to be strongly stable.
		\end{enumerate}
	\end{Definition}
	
	To study the existence of  pathwise solutions to \eqref{SCCF problem}, we  need the following assumptions on the $h$:
	
	\begin{Assumption}\label{Assumption-1} 
		
		We assume that when $s>3/2$,   $h:[0,\infty)\times H^s\ni (t,u)\mapsto h(t,u)\in \LL_2(\U; H^s)$ is continuous. Furthermore, we assume that there are two non-decreasing  locally bounded functions $f,q:[0,+\infty)\rightarrow[0,+\infty)$ such that
		\begin{itemize}

			\item For any $t>0$ and $u\in H^s$,
			\begin{align}\label{assumption 1 for h}
				\|h(t,u)\|_{\LL_2(\U; H^s)}\leq f(\|u_x\|_{L^{\infty}}+\|\HH u_x\|_{L^\infty})(1+\|u\|_{H^s}).
			\end{align}
			\item 
			For any $t>0$,
			\begin{equation}\label{assumption 2 for h}
				\sup_{\|u\|_{H^s},\|v\|_{H^s}\le N}\left\{{\bf 1}_{\{u\ne v\}}  \frac{\|h(t,u)-h(t,v)\|_{\LL_2(\U; H^s)}}{\|u-v\|_{H^s}}\right\} \le q(N),\ \ N\ge 1.
			\end{equation}
			
		\end{itemize}
	\end{Assumption}

\textbf{Example.} Now we give an example of noise coefficient satisfying Assumption \ref{Assumption-1}. For simplicity, we only consider the 1-D case, i.e., $\W=W$, and $h:(t,u)\mapsto  h(t,u)$ is a map from $[0,\infty)\times H^s$ to $ H^{s}$.  It is easy to verify that if that $h$ can be constructed, the extension of this example to the general  $\LL_2(\U;H^s)$-valued $h$ can be carried out by considering $h(e_k)$, where $\{e_k\}_{k\ge1}$ is a complete orthonormal basis of $\U$ (as 1-D case with suitable coefficient) such that $\sum_{k\geq1}\|h(e_k)\|^2_{H^s}<\infty$. To that purpose, let $G=\frac{1}{2} {\rm e}^{-|x|}$. Then in 1-D case, $(1-\partial_{xx}^2)^{-1}$ can be understood as
	\begin{align}\label{Helmboltz operator 2}
		\left[(1-\partial_{xx}^2)^{-1}f\right](x)=[G\star f](x) \ \text{for}\ f\in L^2(\R),
	\end{align}
	where $\star$ stands for the convolution. Now we let 
	$$h(t,u)=q(t)(1-\partial_{xx}^2)^{-1}\partial_{x}[(u_x)^k+(\HH u_x)^n],\ \ k,n\ge1.$$
	If $q(\cdot)$ is smooth with both upper and lower bounds, then it is easy to see that $h$ satisfies Assumption \ref{Assumption-1}.

	To find global existence, we need some stronger condition on the noise coefficient $h$ and we make the following assumption:
	
	\begin{Assumption}\label{Assumption-alpha}
		We assume that when $s>\frac32$,  $\alpha:[0,\infty)\times H^s\ni (t,u)\mapsto \alpha(t,u)\in H^s$  is continuous. Moreover, we assume the following properties hold true:
		
		\begin{itemize}
			\item $\alpha(\cdot,u)$ is  bounded for all $u\in H^s$ and  there is a non-decreasing  locally bounded function $l(\cdot):[0,\infty)\rightarrow[0,\infty)$ such that
			for any $t\ge 0$,
			\begin{equation}
				\sup_{\|u\|_{H^s},\|v\|_{H^s}\le N}\left\{{\bf 1}_{\{u\ne v\}}  \frac{\|\alpha(t,u)-\alpha(t,v)\|_{H^s}}{\|u-v\|_{H^s}}\right\} \le l(N),\ \ N\ge 1,\ s>3/2.\label{alpha-Lip}
			\end{equation}

			\item Define
			\begin{equation*}
				\mathfrak{G}=\left\{\G\in C^2([0,\infty);[0,\infty)): \G(0)=0,\ \G'(x)>0, \ \G''(x)\le 0 \ \text{and} \ \lim_{x\to\infty} \G(x)=\infty\right\},
			\end{equation*}
			and we assume that there is a function $\G\in \mathfrak{G}$ and constants $K_1, K_2>0$ such that for all $(t,u)\in [0,\infty)\times H^s$ with $s>5/2$,
		\begin{align} 
			\G'(\|u\|^2_{H^{s-1}})\mathcal{M}(t)+
			2\G''(\|u\|^2_{H^{s-1}})\left|\left(\alpha(t,u), u\right)_{H^{s-1}}\right|^2 
			\leq K_1
			-K_2 \frac{\left\{\G'(\|u\|^2_{H^{s-1}})\left|\left(\alpha(t,u), u\right)_{H^{s-1}}\right|\right\}^2}{ 1+\G(\|u\|^2_{H^{s-1}})},\label{noise alpha Lyapunov}
		\end{align}
			where 
			\begin{eqnarray*}
			\mathcal{M}(t)=2Q(\|u_x\|_{L^\infty}+\|\HH u_x\|_{L^\infty})\|u\|^2_{H^{s-1}}
				+\|\alpha(t,u)\|^2_{H^{s-1}}
			\end{eqnarray*}
		and	$Q$ is the constant given in Lemma \ref{Huux Hs inner product}.
			
		\end{itemize}
	\end{Assumption}

\textbf{Example.}
Let $q(t)$ be a continuous function such that $q_*<q^2(t)<q^*$ for all $t$ and let $Q$ be the constant given in Lemma \ref{Huux Hs inner product} below. Then it is easy to check that 
\begin{align}
\alpha(t,u)\triangleq q(t)\left(1+\|u_x\|_{L^\infty}+\|\HH u_x\|_{L^\infty}\right)^\theta u \label{example:global:alpha}
\end{align}
with 
\begin{equation}\label{example:global:coefficients}
	\text{either}\ \theta>\frac12,\ q^*>q_*>0\ \text{or}\ \theta=\frac12,\ q^*>q_*>2Q
\end{equation}
satisfies Assumption \ref{Assumption-alpha} with $\G(x)=\log(1+x)\in\mathfrak{G}$. For simplicity we only prove that \eqref{noise alpha Lyapunov}. Indeed, we observe that
\begin{align*}
&\G'(\|u\|^2_{H^{s-1}})\mathcal{M}(t)+
2\G''(\|u\|^2_{H^{s-1}})\left|\left(\alpha(t,u), u\right)_{H^{s-1}}\right|^2+K_2 \frac{\left\{\G'(\|u\|^2_{H^{s-1}})\left|\left(\alpha(t,u), u\right)_{H^{s-1}}\right|\right\}^2}{ 1+\G(\|u\|^2_{H^{s-1}})}\notag\\
=\, & \frac{2Q(\|u_x\|_{L^\infty}+\|\HH u_x\|_{L^\infty})\|u\|^2_{H^{s-1}}
	+q^2(t)\left(1+\|u_x\|_{L^\infty}+\|\HH u_x\|_{L^\infty}\right)^{2\theta} \|u\|^2_{H^{s-1}}}{1+\|u\|^2_{H^{s-1}}}\notag\\
&-\frac{2q^2(t)\left(1+\|u_x\|_{L^\infty}+\|\HH u_x\|_{L^\infty}\right)^{2\theta} \|u\|^4_{H^{s-1}}}{\left(1+\|u\|^2_{H^{s-1}}\right)^2}\notag +K_2\frac{q^2(t)\left(1+\|u_x\|_{L^\infty}+\|\HH u_x\|_{L^\infty}\right)^{2\theta} \|u\|^4_{H^{s-1}}}{\left(1+\|u\|^2_{H^{s-1}}\right)^2\left(1+\log(1+\|u\|^2_{H^{s-1}})\right)}\notag\\
\leq\, &2Q(\|u_x\|_{L^\infty}+\|\HH u_x\|_{L^\infty})+q^2(t)\left(1+\|u_x\|_{L^\infty}+\|\HH u_x\|_{L^\infty}\right)^{2\theta}\notag\\
&-\frac{2q^2(t)\left(1+\|u_x\|_{L^\infty}+\|\HH u_x\|_{L^\infty}\right)^{2\theta} \|u\|^4_{H^{s-1}}}{\left(1+\|u\|^2_{H^{s-1}}\right)^2}+K_2\frac{q^2(t)\left(1+\|u_x\|_{L^\infty}+\|\HH u_x\|_{L^\infty}\right)^{2\theta} }{\left(1+\log(1+\|u\|^2_{H^{s-1}})\right)}:=\mathfrak{J}
\end{align*}
If $\|u_x\|_{L^\infty}+\|\HH u_x\|_{L^\infty}$ is bounded, then $\mathfrak{J}$ is also bounded, and hence can be controlled by some constant $K_1>0$. To prove \eqref{noise alpha Lyapunov}, we only need to check that $\mathfrak{J}$ can be also controlled by $K_1$ when $\|u_x\|_{L^\infty}+\|\HH u_x\|_{L^\infty}\rightarrow\infty$.
Let $\mathcal{P}=q^2(t)\left(1+\|u_x\|_{L^\infty}+\|\HH u_x\|_{L^\infty}\right)^{2\theta}$.	Due to the embedding $H^{s-1}\hookrightarrow W^{1,\infty}$, when $\|u_x\|_{L^\infty}+\|\HH u_x\|_{L^\infty}\rightarrow\infty$, $\|u\|_{H^{s-1}}\rightarrow\infty$. 
Hence, we have that
	\begin{align*}
		\limsup_{\|u_x\|_{L^\infty}+\|\HH u_x\|_{L^\infty}\rightarrow +\infty}\mathfrak{J}
		&\leq \,\limsup_{\|u_x\|_{L^\infty}+\|\HH u_x\|_{L^\infty}\rightarrow +\infty}	\left\{\frac{2Q(\|u_x\|_{L^\infty}+\|\HH u_x\|_{L^\infty})}{\mathcal{P}}
		+	1-\frac{2\|u\|^4_{H^{s-1}}}{\left(1+\|u\|^2_{H^{s-1}}\right)^2}\right.\\
		&\hspace*{5cm}\left.
		+K_2\frac{\|u\|^2_{H^{s-1}}}{\left(1+\|u\|^2_{H^{s-1}}\right)^2\left(1+\log(1+\|u\|^2_{H^{s-1}})\right)}\right\}\mathcal{P}.
	\end{align*}
Moreover, if \eqref{example:global:coefficients} is satisfied,  $$\limsup_{\|u_x\|_{L^\infty}+\|\HH u_x\|_{L^\infty}\rightarrow +\infty}\frac{2Q(\|u_x\|_{L^\infty}+\|\HH u_x\|_{L^\infty})}{\mathcal{P}}<1$$ and consequently
$\limsup_{\|u_x\|_{L^\infty}+\|\HH u_x\|_{L^\infty}\rightarrow +\infty}\mathfrak{J}\leq0$. 
Therefore $\alpha$ given by \eqref{example:global:alpha} and $\G(\cdot)=\log(1+\cdot)$ satisfies \eqref{noise alpha Lyapunov}.

In the linear noise case \eqref{linear:SCCF:problem}, we make the following assumption on $b$:
	
	\begin{Assumption}\label{Assumption-3} 
	We assume $b(t)$ in \eqref{linear:SCCF:problem} satisfies that $b(t)\in C\left([0,\infty);[0,\infty)\right)$ and there exists some $b^*>0$ such that $b^2(t)<b^*$ for all $t\geq 0$.
	\end{Assumption}

	Finally, we assume the following conditions to study the question \ref{Q:noise dependence}:
	\begin{Assumption}\label{Assumption-2} 
		When considering \eqref{SCCF problem} in Section \ref{sect:weak instability}, we assume that for $s>3/2$, $h:[0,\infty)\times H^s\ni (t,u)\mapsto h(t,u)\in \LL_2(\U; H^s)$ is continuous. Moreover, we assume the following:
		
		\begin{itemize} 
			
			\item There exists a non-decreasing and locally bounded functions $l(\cdot):[0,+\infty)\rightarrow[0,+\infty)$ such that for any $t\ge 0$ and $u\in H^s$ with $s>3$, we have that
			\begin{align*}
				\|h(t,u)\|_{\LL_2(\U; H^s)}\leq l(\|u_x\|_{L^{\infty}}+\|\HH u_x\|_{L^{\infty}})\|u\|_{H^s}.
			\end{align*} 
			\item  There exists a non-decreasing and locally bounded functions $g(\cdot):[0,+\infty)\rightarrow[0,+\infty)$ and a real number $\sigma_0\in(3/2,7/4)$ such that for all $N\ge1$,
			\begin{align*}
				\sup_{t\ge0,\|u\|_{H^s}\le N}\|h(t,u)\|_{\LL_2(\U; H^{\sigma_0})}\leq g(N) {\rm e}^{-\frac{1}{\|u\|_{H^{\sigma_0}}}}.
			\end{align*}
			
			\item  Property \eqref{assumption 2 for h} holds true.
		\end{itemize}
		
	\end{Assumption}

\textbf{Example.} 
Let us here give an example of noise structure satisfying Assumption \ref{Assumption-2}. As above, we only consider the case where $h(t,u)\ {\rm d}\W
	=b(t,u) \ {\rm d}W$ with $W$ a standard 1-D Brownian motion. Let $k\ge1$ and $q(\cdot)$ be a continuous and bounded function, then for any $k,n\ge1$,
	$$b(t,u)= q(t) {\rm e}^{-\frac{1}{\|u\|_{H^{\sigma_0}}}}(1-\partial_{xx}^2)^{-1}\partial_{x}[(u_x)^k+(\HH u_x)^n]$$
	satisfies Assumption \ref{Assumption-2}, where $(1-\partial_{xx}^2)^{-1}$ is defined in \eqref{Helmboltz operator 2}.

	\subsection{Main results and remarks}
   In this subsection, we present the precise statements of the different results shown in this article. The first result reads
	
	\begin{Theorem}\label{Local pathwise solution}
		Let $s>3$ and let $h(t,u)$ satisfy Assumption \ref{Assumption-1}. If $u_0$ is an $H^s$-valued $\mathcal{F}_0$-measurable random variable satisfying $\E\|u_0\|^2_{H^s}<\infty$, then there is a local unique pathwise solution $(u,\tau)$ to \eqref{SCCF problem} in the sense of Definition \ref{pathwise solution definition} with
		\begin{equation}\label{L2 moment bound}
			u(\cdot\wedge \tau)\in L^2\left(\Omega; C\left([0,\infty);H^s\right)\right).
		\end{equation}
		Moreover, $(u,\tau)$ can be extended to a unique maximal pathwise solution $(u,\tau^*)$ with the following blow-up criterion:
		\begin{equation}\label{Blow-up criterion common}
			\textbf{1}_{\left\{\limsup_{t\rightarrow \tau^*}\|u(t)\|_{H^{s}}=\infty\right\}}=\textbf{1}_{\left\{\limsup_{t\rightarrow \tau^*}\|u_x(t)\|_{L^\infty}+\|\HH u_x(t)\|_{L^\infty}=\infty\right\}}\ \p-a.s.
		\end{equation}
	\end{Theorem}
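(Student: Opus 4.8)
The plan is to construct the solution by a two-parameter regularization — a Friedrichs mollifier together with a cut-off of the $\Wlip$-type quantity $\Lambda(u):=\|u_x\|_{L^\infty}+\|\HH u_x\|_{L^\infty}$ — to close uniform-in-parameter energy estimates in $H^s$ via It\^o's formula, and then to pass to the limit \emph{without} the usual probabilistic compactness machinery, which is unavailable here because $H^s(\R)$ does not embed compactly into lower-order Sobolev spaces. The replacement, and the main obstacle, is to prove that the approximating family is Cauchy in $H^{s-1}$ by closing a stochastic Gronwall inequality for the difference of two mollified solutions with constants independent of the mollification parameter. The blow-up criterion \eqref{Blow-up criterion common} will then follow from a conditional version of the $H^s$-energy estimate.

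\emph{Step 1: regularized problem and uniform bounds.} For $R>0$ fix $\theta_R\in C_c^\infty([0,\infty))$ with $\mathbf 1_{[0,R]}\le\theta_R\le\mathbf 1_{[0,2R]}$, and for $\e\in(0,1)$ let $J_\e$ be a standard mollifier. I would first solve
\[
{\rm d}u+\theta_R(\Lambda(u))\,J_\e\big[(\HH J_\e u)\,\partial_x J_\e u\big]\,{\rm d}t=\theta_R(\Lambda(u))\,h(t,u)\,{\rm d}\W,\qquad u(0)=u_0 .
\]
Since $J_\e$ is smoothing, $\Lambda$ is locally Lipschitz on $H^s$, and $h$ is locally Lipschitz by \eqref{assumption 2 for h}, both coefficients are locally Lipschitz on $H^s$; moreover on $\{\theta_R\neq0\}$ one has $\theta_R(\Lambda(u))\|h(t,u)\|_{\LL_2(\U;H^s)}\le f(2R)(1+\|u\|_{H^s})$ by \eqref{assumption 1 for h}. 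Applying It\^o's formula to $\|u\|_{H^s}^2$, using that $\HH$ is skew-adjoint on $L^2$, commutes with $D^s$, and is bounded on $H^s$, the commutator/transport term is bounded — uniformly in $\e$ by the mollifier contractions and Lemma \ref{Huux Hs inner product} — by $Q\,\theta_R(\Lambda(u))\,\Lambda(u)\,\|u\|_{H^s}^2\le 2QR\,\|u\|_{H^s}^2$, a \emph{deterministic} coefficient; hence there is no explosion and the SDE has a unique global solution $u^{R,\e}$. A stochastic Gronwall argument together with the Burkholder--Davis--Gundy inequality then gives, for every $T>0$,
\[
\sup_{\e\in(0,1)}\E\Big[\sup_{t\le T}\|u^{R,\e}(t)\|_{H^s}^2\Big]\le C\big(R,T,\E\|u_0\|_{H^s}^2\big).
\]

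\emph{Step 2: convergence, uniqueness, and localization.} For $\e,\e'\in(0,1)$ set $v=u^{R,\e}-u^{R,\e'}$ and apply It\^o's formula to $\|v\|_{H^{s-1}}^2$. Estimating each term of the difference equation at regularity $H^{s-1}$ — using the uniform $H^s$-bounds of Step 1, the commutator estimate one order lower, the mollifier approximation $\|(J_\e-I)w\|_{H^{s-1}}\le C\e\|w\|_{H^s}$, the local Lipschitz bound \eqref{assumption 2 for h} (interpolated to $H^{s-1}$), and BDG — produces a stochastic Gronwall inequality, hence $\E\big[\sup_{t\le T}\|u^{R,\e}-u^{R,\e'}\|_{H^{s-1}}^2\big]\le C(R,T)(\e+\e')\to0$. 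Thus $u^{R,\e}\to u^R$ in $L^2(\Omega;C([0,T];H^{s-1}))$; interpolating with the uniform $L^2(\Omega;L^\infty(0,T;H^s))$ bound yields convergence in $L^2(\Omega;C([0,T];H^{s'}))$ for every $s'\in(3/2,s)$ and $u^R\in L^2(\Omega;L^\infty(0,T;H^s))$, which suffices to pass to the limit in the truncated, unmollified equation; that $u^R\in C([0,\infty);H^s)$ $\p$-a.s. follows from weak continuity together with continuity of $t\mapsto\|u^R(t)\|_{H^s}$, the latter from It\^o applied to $\|u^R\|_{H^s}^2$. Pathwise uniqueness of the truncated equation comes from the same $H^{s-1}$ difference estimate with $\e=\e'=0$. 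Finally, letting $\tau_R=\inf\{t\ge0:\Lambda(u^R(t))\ge R\}$, the cut-off is inactive on $[0,\tau_R]$, so $(u^R,\tau_R)$ solves \eqref{SCCF problem} and satisfies \eqref{L2 moment bound}; by uniqueness the family $\{(u^R,\tau_R)\}_R$ is consistent, and $\tau^*=\lim_{R\to\infty}\tau_R$ together with a localizing sequence $\tau_n$ produces the maximal pathwise solution of Definition \ref{pathwise solution definition}, with $\sup_{t\in[0,\tau_n]}\|u\|_{H^s}\to\infty$ on $\{\tau^*<\infty\}$ built in by construction.

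\emph{Step 3: blow-up criterion.} Since $H^s\hookrightarrow\Wlip$ for $s>3/2$ and $\HH$ is bounded on $H^s$, we have $\Lambda(u)\lesssim\|u\|_{H^s}$, so $\{\limsup_{t\to\tau^*}\Lambda(u)=\infty\}\subseteq\{\limsup_{t\to\tau^*}\|u\|_{H^s}=\infty\}$, which gives one inequality between the indicators in \eqref{Blow-up criterion common}. For the reverse inclusion, on the event $\{\limsup_{t\to\tau^*}\Lambda(u)<\infty\}$ one has $\sup_{t<\tau^*}\Lambda(u)<\infty$; arguing then on $[0,\tau_M\wedge T]$ with $\tau_M=\inf\{t:\Lambda(u)\ge M\}$, the drift coefficient in the It\^o expansion of $\|u\|_{H^s}^2$ is again deterministic (depending only on $M$ through Lemma \ref{Huux Hs inner product} and $f(M)$), so a stochastic Gronwall estimate followed by $T\to\infty$ forces $\sup_{t<\tau^*}\|u\|_{H^s}<\infty$ on that event — incompatible with the maximality of $\tau^*$ when $\tau^*<\infty$, and with $\limsup_{t\to\tau^*}\|u\|_{H^s}=\infty$ in general. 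Hence the two events coincide $\p$-a.s., proving \eqref{Blow-up criterion common}. The delicate point throughout is Step 2, where convergence of the approximants must be obtained quantitatively rather than by extracting subsequences; it is precisely there that the local Lipschitz hypothesis \eqref{assumption 2 for h} on $h$ is indispensable.
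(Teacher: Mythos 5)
Your overall strategy coincides with the paper's: mollify the transport term, cut off the nonlinearity, prove uniform\hbox{-}in\hbox{-}$\e$ bounds in $H^s$ via It\^o and BDG, replace the (unavailable) compactness argument by a quantitative Cauchy estimate for differences of approximants in a weaker norm, pass to the limit, remove the cut-off by a stopping time, and prove \eqref{Blow-up criterion common} by comparing two families of stopping times. The genuine gap is in your Step~2: the Cauchy estimate cannot be closed in $H^{s-1}$. Any decomposition of $G_{1,\e}(u_\e)-G_{1,\e'}(u_{\e'})$ must contain terms in which $J_\e-J_{\e'}$ acts on the full product $J_\e(\HH u_\e)\,\partial_xJ_\e u_\e$, and terms containing $\partial_x(J_\e-J_{\e'})u_\e$. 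The product is bounded in $H^{s-1}$ but not, uniformly in $\e$, in anything better, and $\partial_x(J_\e-J_{\e'})u_\e$ measured in $H^{s-1}$ is $(J_\e-J_{\e'})u_\e$ measured in $H^{s}$; in both cases the mollifier difference yields only $O(1)\,\|u_\e\|_{H^s}^2$ with no positive power of $\e$. Your inequality $\|(J_\e-I)w\|_{H^{s-1}}\le C\e\|w\|_{H^s}$ is correct but inapplicable here, because the relevant $w$ is the nonlinearity, which sits one full derivative below $H^s$. The resulting Gronwall inequality for $\|u^{R,\e}-u^{R,\e'}\|_{H^{s-1}}^2$ then closes only up to an $O(1)$ forcing term and gives boundedness, not convergence. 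This is exactly why the paper measures the difference in $H^{s-3/2}$: the extra half derivative of room produces the factor $\max\{\e,\e'\}^{1/2}$ (see Lemma \ref{N2 Lemma} and \eqref{Cauchy H s-3/2 ee}). Since $s-3/2>3/2$, that weaker topology still controls $\|u_x\|_{L^\infty}+\|\HH u_x\|_{L^\infty}$, the cut-off, and the Lipschitz bound \eqref{assumption 2 for h} at the lower index, so repairing your argument amounts to replacing $H^{s-1}$ by $H^{s-1-\delta}$ for some $\delta>0$ throughout Step~2.

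Two further points need attention. First, for a fixed deterministic $R$ your exiting time $\tau_R=\inf\{t:\Lambda(u^R(t))\ge R\}$ vanishes on the event $\{\Lambda(u_0)\ge R\}$, which has positive probability for a general $u_0\in L^2(\Omega;H^s)$; hence $(u^R,\tau_R)$ is not a local pathwise solution in the sense of Definition \ref{pathwise solution definition}, and ``$\tau^*=\lim_{R\to\infty}\tau_R$'' does not by itself supply the required increasing sequence of almost surely positive stopping times. The paper resolves this by partitioning $\Omega$ into $\Omega_k=\{k-1\le\|u_0\|_{H^s}<k\}$ and taking $R=R_k$ on each piece; some such device is needed, since $R$ cannot be allowed to depend on $\omega$ without destroying the uniform estimates. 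Second, in Steps~2 and~3 you apply It\^o's formula directly to $\|u\|_{H^s}^2$ for the unmollified equation; this is not justified, because $(\HH u)u_x$ lies only in $H^{s-1}$ and neither $\left((\HH u)u_x,u\right)_{H^s}$ nor the corresponding dual pairing in a Gelfand triple is defined. One must instead apply It\^o to $\|J_\e u\|_{H^s}^2$, use Lemma \ref{Huux Hs inner product}, and pass to the limit (the paper combines this with Kolmogorov's continuity criterion to recover continuity of $t\mapsto\|u(t)\|_{H^s}$). These two issues are repairable by standard means; the $H^{s-1}$ Cauchy estimate is the step that actually fails as written.
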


\begin{Remark}\label{Blow-up time remark}
	Before we explain the ideas and difficulties regarding the existence of local pathwise solutions,let us first stress some important remarks on the blow-up criterion \eqref{Blow-up criterion common}. 
	
	\begin{itemize}
		\item The blow-up criterion \eqref{Blow-up criterion common} implies that the 
		$H^{s'}$ norm of the solution within the range $s'\in(\frac32, s]$ blows up at the same time $\tau^*$. Indeed,	for any fixed $s$ and $s'\in(\frac32, s)$, since 
		$$\|u_x\|_{L^{\infty}}+\|\HH u_x\|_{L^{\infty}}\lesssim \|u\|_{H^{s'}}\leq \|u\|_{H^{s}},$$ we can conclude that $\|u\|_{H^{s'}}$ blows up no later than the time $\|u_x(t)\|_{L^{\infty}}+\|\HH u_x\|_{L^{\infty}}$ blows up  but no earlier than the time $\|u\|_{H^{s}}$ blows up. Therefore, equality \eqref{Blow-up criterion common} shows that all of the 	$H^{s'}$ norms have the same blow-up time. This fact will be used to prove Theorem \ref{Non blowup}  (see \eqref{hat tau=tau star}).
		
		\item Invoking the well-known logarithmic Sobolev inequality involving the Hilbert transform $\HH$
		\begin{equation}\label{Dong estimate}
			\|\mathcal{H}u_x\|_{L^\infty}\lesssim\, 
			\left(1+\|u_x\|_{L^\infty}\log\left({\rm e}+\|u_x\|_{H^1}\right)+\|u_x\|_{L^2}\right),
		\end{equation}
		one can show in deterministic case, by using \eqref{Dong estimate}, 
		that the blow-up criterion \eqref{Blow-up criterion common} can be improved into (cf.~\cite{Dong-2008-JFA})
		$$ \limsup_{t\rightarrow \tau^*}\|u(t)\|_{H^s}=\infty
		\Longleftrightarrow  \limsup_{t\rightarrow \tau^*}\|u_x(t)\|_{L^{\infty}}=\infty.$$
		However, it is still not clear how to achieve this in the stochastic setting. Technically, because $\E\left[\left(1+\|u_x\|_{L^\infty}\log\left({\rm e}+\|u_x\|_{H^1}\right)+\|u_x\|_{L^2}\right)\|u\|_{H^s}\right]$ is involved, we have not been able to close the estimate for 
		$\E\|u\|^2_{H^s}$.
	\end{itemize}

\end{Remark}
	
	\begin{Remark}\label{remark:Th1}
		Now we give a remark regarding  the existence part in Theorem \ref{Local pathwise solution}. Since we focus on a Cauchy problem defined on the whole space $\R$, the classical 
		probabilistic compactness argument for non-linear SPDEs in bounded domain seems inapplicable in this work. Indeed, we have the following essential difficulties:

		\begin{itemize}
			\item For smooth $u$, in the \textit{a priori} $L^2(\Omega;H^s)$ estimate for $u$, after using the It\^{o} formula for $\|u\|^2_{H^s}$, we will have to deal with
			$\E \left(f^2(\|u_x\|_{L^\infty})(1+\|u\|^2_{H^s})\right)$ and 
			$\E \left(\left(\|u_x\|_{L^{\infty}}+\|\HH u_x\|_{L^{\infty}}\right)\|u\|^2_{H^s}\right)$, coming from $h(t,u)$ and $(\HH u)u_x$, respectively. To close the estimate, these two terms should be controlled in terms of 
			$\E \left(1+\|u\|^2_{H^s}\right)$. Since we cannot split the mathematical expectation, 
			we add a cut-off function to the original problem to cut the non-linear parts in terms of $\|\cdot\|_{H^r}$ (see \eqref{cut-off problem} below) with suitable $r$ such that $H^{s}\hookrightarrow H^{r}\hookrightarrow \Wlip$. This enables us to close the \textit{a priori} estimate. In this work, $s>3$ and we let $r=s-3/2$.

			\item As a second step, we construct the approximation scheme and obtain certain uniform estimates. Our next aim, is to pass to the limit to obtain the existence of a solution.
			If the target problem is defined on bounded domain, usually one can find $q\in(r, s)$ such that $H^s\hookrightarrow\hookrightarrow H^q\hookrightarrow H^r$ (here $\hookrightarrow\hookrightarrow$ means the embedding is compact). By the uniform estimates, one can follow the compactness argument (cf. Prokhorov's Theorem and Skorokhod's Theorem) to obtain the convergence in $H^q$,  which enables us to pass to the limit to find a martingale solution to the \textit{cut-off} version of the target SPDE. We can \textit{a posteriori} introduce a stopping time to remove the cut-off.  We refer to \cite{Bensoussan-1995-AAM,Brzezniak-Ondrejat-2007-JFA,Debussche-Glatt-Temam-2011-PhyD,Hofmanova-2013-SPTA,GlattHoltz-Vicol-2014-AP,Tang-2018-SIMA,Tang-2020-arxiv} for different examples. On the contrary, in unbounded domains, the compact embedding only holds true for the
			``local" space $H^s_{\rm loc}\hookrightarrow\hookrightarrow H^q_{\rm loc}$. Because the \textit{cut-off} $\|\cdot\|_{H^r}$ appears in the problem itself, even though we obtain the convergence in $H^q_{\rm loc}$ (cf. Prokhorov's Theorem and Skorokhod's Theorem), it is still \textit{not} enough to guarantee the convergence of $\|\cdot\|_{H^r}$ because $\|\cdot\|_{H^r}$  is a \textit{global} object (for space variable), which cannot be controlled by a \textit{local} condition. As mentioned  above, because the \textit{cut-off} is needed,  we have to establish a convergence result in some topology no weaker than $H^{r}$.
			%
			
			\item In this paper, motivated by \cite{Li-Liu-Tang-2021-SPA}, by a careful analysis on the differences between any two approximate solutions, we  will show that there exists a sub-sequence of the approximate solutions converging in $C([0,T];H^{r})$ almost surely. 
			We also remark that our analysis is also available for the torus case, i.e.  $x\in\T=\R/2\pi\mathbb Z$, since all the estimates can be perform in the same way (up to some obvious modifications) for $x\in\T$. In the  deterministic case where the \textit{cut-off} is no longer needed, convergence in local Sobolev spaces is sufficient to take limit to find a solution.

		\end{itemize}

	\end{Remark}

	Hereafter, we consider the noise effect versus finite time blow-up. Our second result gives a partial answer to \ref{Q:noise global solution}:

	\begin{Theorem}[Noise preventing blow-up]\label{Non blowup}  Let $s>3$ and $u_0\in H^s$ be an $H^s$-valued $\mathcal{F}_0$-measurable random variable with $\E\|u_0\|^2_{H^s}<\infty$.  If 
		Assumption \ref{Assumption-alpha} holds true, then
		the corresponding maximal pathwise solution
		$(u,\tau^*)$  to \eqref{SCCF non blow up Eq} satisfies
		$$\p\left\{	\tau^*=\infty\right\}
		=1.$$
	\end{Theorem}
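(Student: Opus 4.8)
The plan is to show that the solution cannot leave every ball $\{\|u\|_{H^{s-1}} \le n\}$ in finite time, by constructing a supermartingale (up to a drift bounded by a deterministic constant) out of $\log(1+\G(\|u\|_{H^{s-1}}^2))$ and appealing to a Lyapunov-type argument. First I would fix the maximal pathwise solution $(u,\tau^*)$ given by Theorem~\ref{Local pathwise solution}, and note that by the blow-up criterion \eqref{Blow-up criterion common} together with the first bullet of Remark~\ref{Blow-up time remark}, the blow-up time of $\|u\|_{H^{s-1}}$ coincides with $\tau^*$ (this is the point referenced as \eqref{hat tau=tau star}); since $s>3$ we have $s-1>2>5/2$ only when $s>7/2$, so some care is needed, but in any case $s-1>3/2$ and the embedding $H^{s-1}\hookrightarrow W^{1,\infty}$ holds, which is all that the criterion requires. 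Thus it suffices to prove $\sup_{t<\tau^*}\|u(t)\|_{H^{s-1}}<\infty$ a.s., i.e. that $\tau^*=\infty$ a.s.

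Next I would apply It\^o's formula to $\|u\|_{H^{s-1}}^2$. The drift from the transport term $(\HH u)u_x$ is estimated by Lemma~\ref{Huux Hs inner product}, which produces $|((\HH u)u_x, u)_{H^{s-1}}| \le Q(\|u_x\|_{L^\infty}+\|\HH u_x\|_{L^\infty})\|u\|_{H^{s-1}}^2$; the It\^o correction term contributes $\|\alpha(t,u)\|_{H^{s-1}}^2$. Hence
\begin{equation*}
	{\rm d}\|u\|_{H^{s-1}}^2 \le \mathcal{M}(t)\,{\rm d}t + 2\left(\alpha(t,u),u\right)_{H^{s-1}}{\rm d}W,
\end{equation*}
with $\mathcal{M}(t)$ exactly as in Assumption~\ref{Assumption-alpha}. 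Then I would set $y(t) = \G(\|u(t)\|_{H^{s-1}}^2)$ and apply It\^o's formula again (using $\G\in C^2$): the drift of $y$ is $\G'(\|u\|_{H^{s-1}}^2)\mathcal{M}(t) + 2\G''(\|u\|_{H^{s-1}}^2)|(\alpha,u)_{H^{s-1}}|^2$ (the $\G''$ term picking up the quadratic variation of $\|u\|_{H^{s-1}}^2$, which is $4|(\alpha,u)_{H^{s-1}}|^2$, and the inequality going the right way since $\G''\le 0$ and the transport drift is a genuine inequality). Finally I would consider $z(t)=\log(1+y(t))$: another application of It\^o's formula gives a drift
\begin{equation*}
	\frac{\G'\mathcal{M}(t) + 2\G''|(\alpha,u)_{H^{s-1}}|^2}{1+\G(\|u\|_{H^{s-1}}^2)} - \frac{\{\G'(\|u\|_{H^{s-1}}^2)|(\alpha,u)_{H^{s-1}}|\}^2}{(1+\G(\|u\|_{H^{s-1}}^2))^2}\cdot 2,
\end{equation*}
and Assumption~\ref{Assumption-alpha}, inequality \eqref{noise alpha Lyapunov}, is precisely engineered so that, after multiplying \eqref{noise alpha Lyapunov} by $(1+\G(\|u\|_{H^{s-1}}^2))^{-1}$, this drift is bounded above by $K_1/(1+\G(\|u\|_{H^{s-1}}^2)) \le K_1$. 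Therefore $z(t) - K_1 t$ is a local supermartingale.

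With the supermartingale in hand, I would run a standard stopping-time argument: for the exiting times $\tau_n = \inf\{t: \|u\|_{H^{s-1}}>n\}$ (which increase to $\tau^*$), localize with an additional stopping time to make the stochastic integral a true martingale, take expectations to get $\E\,z(t\wedge\tau_n) \le \E\, z(0) + K_1 t$, and hence $\E\log(1+\G(\|u(t\wedge\tau_n)\|_{H^{s-1}}^2)) \le C(t)$ uniformly in $n$. Since $\G(x)\to\infty$ as $x\to\infty$, on the event $\{\tau^*\le t\}\cap\{\tau_n<t \text{ for all } n\}$ the quantity $z(t\wedge\tau_n)=z(\tau_n)\to\infty$, forcing $\p\{\tau^*\le t, \sup_{s<\tau^*}\|u\|_{H^{s-1}}=\infty\}=0$ by Chebyshev/Fatou; combined with the blow-up criterion this gives $\p\{\tau^*\le t\}=0$ for every $t$, hence $\p\{\tau^*=\infty\}=1$. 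The main obstacle I anticipate is bookkeeping around the regularity threshold: Lemma~\ref{Huux Hs inner product} and the It\^o formula must be justified at the level $H^{s-1}$, which requires $s-1$ large enough (the paper needs $s>5/2$ in parts of Assumption~\ref{Assumption-alpha}, so presumably one works with $u(\cdot\wedge\tau)\in C([0,\infty);H^s)$ and propagates the estimate down to $H^{s-1}$), and ensuring the various localizing stopping times are handled so that the supermartingale property survives passing $n\to\infty$. The algebraic heart of the argument, however, is entirely encoded in \eqref{noise alpha Lyapunov}, so once the It\^o computations are set up correctly the conclusion follows cleanly.
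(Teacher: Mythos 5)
Your proposal is correct, and its skeleton (reduction to the $H^{s-1}$ level via the blow-up criterion and \eqref{hat tau=tau star}, It\^o's formula for $\|u\|^2_{H^{s-1}}$ and then for $\G(\|u\|^2_{H^{s-1}})$, the transport drift controlled by Lemma \ref{Huux Hs inner product}, and the Lyapunov condition \eqref{noise alpha Lyapunov} doing the algebraic work) coincides with the paper's. Where you genuinely diverge is the final step: the paper stops at $\G(\|u\|^2_{H^{s-1}})$, first takes fixed-time expectations to extract the integrated bound \eqref{to use BDG 2} on $\int_0^t K_2\{\G'|(\alpha,u)_{H^{s-1}}|\}^2/(1+\G)\,{\rm d}t'$, then applies the BDG inequality (absorbing the martingale term using that same integrated bound) to get $\E\sup_{t\le T}\G(\|u\|^2_{H^{s-1}})\le C$, and concludes by Chebyshev on the running supremum. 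You instead compose once more with $\log(1+\cdot)$, observe that the It\^o correction of the logarithm itself supplies a negative term $-2\{\G'|(\alpha,u)_{H^{s-1}}|\}^2/(1+\G)^2$, deduce that $\log(1+\G(\|u\|^2_{H^{s-1}}))-K_1t$ is a local supermartingale, and finish with optional stopping at $\tau_n$ plus Chebyshev, with no BDG and no $\E\sup$ estimate. Both are valid; your route is slightly leaner and, as a bonus, never actually uses the $-K_2\{\cdots\}^2/(1+\G)$ term of \eqref{noise alpha Lyapunov} (the logarithm's own quadratic-variation correction plays that role), so it would survive under the weaker hypothesis $K_2=0$, whereas the paper's BDG step genuinely needs $K_2>0$ to control the martingale part. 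The price is that you only obtain $\E\log(1+\G(\|u(t\wedge\tau_n)\|^2_{H^{s-1}}))\le C(t)$ rather than the paper's stronger moment bound on $\sup_{t\le T}\G(\|u\|^2_{H^{s-1}})$, but that is more than enough for non-explosion. Your side remark about needing $s-1>5/2$ is a red herring: condition \eqref{noise alpha Lyapunov} is an inequality for $u\in H^s$ with $s>5/2$ expressed in $H^{s-1}$ norms, so $s>3$ suffices, exactly as you end up using.
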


	\begin{Remark}\label{remark on global existence}
		Let us first recall that in the deterministic counterpart of \eqref{SCCF non blow up Eq}, the blow-up of regular enough solutions actually cannot be prevented,  cf.  \cite{Cordoba-etal-2005-Annals,Silvestre-Vicol-TAMS,Li-Rodrigo-2008}. Therefore, Theorem \ref{Non blowup} justifies the idea that fast growing non-linear noise (\textit{strong} noise) can regularize the solutions in terms of preventing singularities. Here we use the terminology ``fast growing" condition, described by \eqref{noise alpha Lyapunov} in Assumption \ref{Assumption-alpha}, to cancel (notice that $\G''<0$ in \eqref{noise alpha Lyapunov}) the growth of the non-local transport term $(\HH u)u$ such that $\E\G(\|u\|^2_{H^s})$ can be controlled with
		a Lyapunov type function $\G$. The idea of using a Lyapunov type function is motivated by the works \cite{Brzezniak-etal-2005-PTRF,Hasminskii-1969-Book, Ren-Tang-Wang-2020-Arxiv,Rohde-Tang-2021-NoDEA}.
	\end{Remark}	
	
	Complementary, in the case of linear noise, we can show that singularities occur in finite time with positive probability which yields a partial answers to \ref{Q:noise blow up}. The precise statement reads
	\begin{Theorem}[Blow-up linear noise]\label{linear:theorem:blowup}
		Let $0<K<1$, $s>3$ and $b(t)$  satisfy Assumption \ref{Assumption-3}. Assume that  $u_0=u_{0}(\omega,x)$ is an $H^s$ valued $\mathcal{F}_{0}$ measurable random variable satisfying $\mathbb{E}\norm{u_{0}}^2_{H^s}< \infty$ and attaining a global maximum at $x_0$. If
		\begin{equation}\label{condition:blow:up}
			\Lambda u_{0}(\omega,x_0)> \frac{b^*}{K}\ \ \p-a.s.,
		\end{equation} 
		where $b^*$ is given in Assumption \ref{Assumption-3}, then the unique maximal pathwise solution $(u,\tau^*)$ to \eqref{linear:SCCF:problem} satisfies that		
		\begin{equation}\label{blow:up}
			\mathbb{P}\{\tau^*<\infty\}\geq\mathbb{P}\{\limsup_{t\rightarrow \tau^*}\max_{x\in \mathbb{R}}\Lambda u(t)=+\infty\}\geq 	\mathbb{P}\{e^{\int_{0}^{t} b(t') \, {\rm d}\W_{t'}}> K \ \forall t \}>0.
		\end{equation}
	\end{Theorem}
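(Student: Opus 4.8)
The plan is to exploit the linear (multiplicative) structure of the noise in \eqref{linear:SCCF:problem} to reduce the stochastic equation to a random (pathwise) transport equation with a time-dependent amplitude, and then run a deterministic-type blow-up argument along particle trajectories — essentially the real-analysis proof of blow-up for the CCF equation adapted to the random coefficient. First I would introduce the exponential integrating factor: set $\beta(t)=\exp\!\left(\int_0^t b(t')\,{\rm d}W_{t'}-\tfrac12\int_0^t b^2(t')\,{\rm d}t'\right)$ (the stochastic exponential solving ${\rm d}\beta=b(t)\beta\,{\rm d}W$), and define $v=\beta^{-1}u$. An application of the It\^o product rule, valid up to the maximal existence time $\tau^*$, should kill the martingale term and show that $v$ solves, pathwise, a transport equation of the form $v_t+\beta(t)(\mathcal H v)v_x=0$ — note $\mathcal H$ commutes with multiplication by the (spatially constant) factor $\beta$. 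Thus $\beta$ appears only as a non-negative time reparametrisation coefficient, and $v$ has the same regularity and the same blow-up time $\tau^*$ as $u$, since $\beta(\cdot)$ is a.s.\ strictly positive and continuous on $[0,\tau^*)$.

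Next I would test the equation for $v$ against the quantity that detects CCF-type blow-up. Following the Silvestre--Vicol / C\'ordoba--C\'ordoba--Fontelos approach, I would track $m(t)=\max_{x\in\mathbb R}\Lambda v(t,x)$ where $\Lambda=(-\partial_{xx})^{1/2}=\mathcal H\partial_x$; equivalently one works with $\Lambda u=\beta\,\Lambda v$. Let $x(t)$ be a point where the maximum of $\Lambda v(t,\cdot)$ is attained; by the usual Rademacher/Danskin argument $m$ is locally Lipschitz and $\frac{{\rm d}}{{\rm d}t}m(t)=\partial_t(\Lambda v)(t,x(t))$ for a.e.\ $t$. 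Using $\partial_t\Lambda v = -\beta\,\Lambda\big((\mathcal H v)v_x\big)$ together with the pointwise identity/inequality for the CCF nonlinearity at an extremal point of $\Lambda v$ (this is the key deterministic estimate, giving $\Lambda\big((\mathcal H v)v_x\big)(x(t))\ge c\,(\Lambda v(t,x(t)))^2$ at a max, with $c$ a universal constant, here normalised to match the constant in \eqref{condition:blow:up}), one obtains the Riccati differential inequality
\begin{equation*}
	\frac{{\rm d}}{{\rm d}t}m(t)\le -\,c\,\beta(t)\,m(t)^2 \quad\Longrightarrow\quad \frac{{\rm d}}{{\rm d}t}\big(\beta(t)m(t)\big)\ \text{controlled by}\ -c\,\beta(t)\,\big(\beta(t)m(t)\big)^2 + (\text{sign-favourable drift}),
\end{equation*}
and more to the point one shows that $M(t):=\Lambda u(t,x(t)) = \beta(t)m(t)$ satisfies a comparison of the form $M(t)\ge \beta(t)\,\big(M(0)^{-1}-c\int_0^t\beta(s)\,{\rm d}s\big)^{-1}$ on the event where the bracket stays positive. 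Since $M(0)\ge\Lambda u_0(\omega,x_0)$ (using that $u_0$ attains its global maximum at $x_0$, so $x_0$ is an admissible extremal point at $t=0$, with the relevant sign), condition \eqref{condition:blow:up} forces $M(0)>b^*/K$.

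Finally I would convert this into the probability estimate \eqref{blow:up}. On the event $\mathcal A=\{\,{\rm e}^{\int_0^t b(t')\,{\rm d}W_{t'}}>K\ \ \forall t\ge0\,\}$, we have $\beta(t)=\mathrm e^{\int_0^t b\,{\rm d}W}\cdot\mathrm e^{-\frac12\int_0^t b^2}\ \ge\ K\,\mathrm e^{-\frac12 b^* t}$ — wait, that lower bound degenerates; instead I would use $\mathcal A$ only to get $\beta$ bounded below on finite windows together with $\int_0^t\beta(s)\,{\rm d}s\to\infty$, or more simply observe that the relevant Riccati quantity to integrate is $\int_0^t\mathrm e^{\int_0^s b\,{\rm d}W}\,{\rm d}s$, so I would instead factor the $\mathrm e^{-\frac12\int b^2}$ differently and phrase the blow-up condition directly in terms of the martingale exponential $\mathcal E(t)=\mathrm e^{\int_0^t b\,{\rm d}W}$, which is precisely why the final bound in \eqref{blow:up} features $\mathcal E$ and not $\beta$. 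Concretely: on $\mathcal A$ one shows $\int_0^t$ of the driving coefficient is bounded below by $Kt$ (since $\mathcal E>K$), so the denominator $M(0)^{-1}-cK t$ reaches zero in finite time, whence $\limsup_{t\to\tau^*}\max_x\Lambda u(t)=+\infty$ and therefore (by the blow-up criterion \eqref{Blow-up criterion common}, since $\|\Lambda u\|_{L^\infty}\lesssim\|u_x\|_{L^\infty}+\|\mathcal H u_x\|_{L^\infty}$) $\tau^*<\infty$ on $\mathcal A$. This gives the chain of inclusions behind \eqref{blow:up}, and it remains to note $\mathbb P(\mathcal A)>0$: this follows because $\mathcal E$ is a non-negative continuous local martingale hence a supermartingale converging a.s.\ to a finite limit, and a standard small-ball/support argument (or the fact that $\int_0^\infty b^2\,{\rm d}t$ may be infinite only on a set one controls, together with the exponential Doob/Novikov-type estimate) shows the event that $\mathcal E$ never drops below a fixed $K<1$ has strictly positive probability.

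I expect the main obstacle to be the rigorous justification of the It\^o-to-pathwise reduction and, more seriously, making the extremal-point (Rademacher-type) argument fully rigorous for the random field $v$: one must ensure $v(t,\cdot)\in H^s$ with $s>3$ so that $\Lambda v$ is $C^1$ in space, that $t\mapsto m(t)$ is absolutely continuous with the stated a.e.\ derivative, and that the extremum is attained (which uses decay at infinity coming from $H^s(\mathbb R)$). The second delicate point is the positivity $\mathbb P(\mathcal A)>0$ and correctly matching the coefficient $c$ in the Riccati inequality with the constant $1$ implicit in the hypothesis $\Lambda u_0(\omega,x_0)>b^*/K$; the constant bookkeeping in the pointwise CCF estimate is where I would be most careful.
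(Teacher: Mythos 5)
Your reduction of \eqref{linear:SCCF:problem} to the random transport equation $v_t+\beta(\mathcal{H}v)v_x=0$ via $v=\beta^{-1}u$ with $\beta$ the stochastic exponential is exactly the paper's first step (Lemma \ref{WP:gisanov:transformation}) and is fine. The trouble starts at the Riccati step. You propose to track $m(t)=\max_x\Lambda v(t,x)$ at its own maximizer $x(t)$ and assert the pointwise bound $\Lambda\bigl((\mathcal{H}v)v_x\bigr)(x(t))\ge c\,\bigl(\Lambda v(t,x(t))\bigr)^2$ ``at a max''. But the identity that produces the $(\Lambda v)^2$ term (Lemma \ref{identity:SV Lemma}, i.e.\ \cite[Prop.~3.5]{Silvestre-Vicol-TAMS}) is only available at a point where $v_x$ vanishes; at a maximizer of $\Lambda v$ one has $(\Lambda v)_x=0$ but not $v_x=0$, and the product rule for $\Lambda$ leaves an unsigned cross term $\tfrac1\pi\int\frac{(\mathcal{H}v(x)-\mathcal{H}v(y))(v_x(x)-v_x(y))}{(x-y)^2}\,{\rm d}y$, so your key inequality is not justified and does not obviously hold. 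The paper instead follows the characteristic $\phi(t,x_0)$ emanating from the global maximum $x_0$ of $u_0$: transport preserves the maximum, hence $v_x(\phi(t,x_0))=0$ for all $t$, the identity \eqref{identity:SV} applies, and one obtains $\frac{{\rm d}}{{\rm d}t}\Lambda v(\phi(t,x_0))\ge\frac{\beta}{2}\bigl(\Lambda v(\phi(t,x_0))\bigr)^2$. (Your displayed inequality $\frac{{\rm d}}{{\rm d}t}m\le -c\beta m^2$ also has the wrong sign relative to the integrated comparison you subsequently invoke.)

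The second gap is in the final integration. On $\mathcal{A}=\{e^{\int_0^t b\,{\rm d}W}>K\ \forall t\}$ one only gets $\beta(t)>Ke^{-b^*t/2}$, hence $\int_0^\infty\beta\,{\rm d}t\ge 2K/b^*$, a \emph{finite} quantity; your claim that the relevant integral is bounded below by $Kt$ (so that the Riccati denominator vanishes for any positive initial datum) is false, and would render the hypothesis \eqref{condition:blow:up} superfluous. The condition $\Lambda u_0(\omega,x_0)>b^*/K$ is precisely what guarantees $1/F(0)<\frac12\cdot\frac{2K}{b^*}$, so that the integrated inequality $1/F(0)\ge\frac12\int_0^{\tau^*}\beta\,{\rm d}t\ge\frac{K}{b^*}\bigl(1-e^{-b^*\tau^*/2}\bigr)$ forces $e^{-b^*\tau^*/2}\ge 1-\frac{b^*}{K F(0)}>0$ and hence $\tau^*<\infty$ on $\mathcal{A}$. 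The remaining ingredients of your plan (the inclusion chain in \eqref{blow:up}, the use of the blow-up criterion via $\|\Lambda u\|_{L^\infty}\lesssim\|u_x\|_{L^\infty}+\|\mathcal{H}u_x\|_{L^\infty}$) match the paper, but as written the two central estimates above do not go through.
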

	
	\begin{Remark}\label{remark blow-up}	We make the following remarks regarding Theorem \ref{linear:theorem:blowup}:
		\begin{itemize}
			
			\item Motivated by previous works \cite{GlattHoltz-Vicol-2014-AP,Rockner-Zhu-Zhu-2014-SPTA,Rohde-Tang-2021-NoDEA}, to study \eqref{linear:SCCF:problem}, we make use of Girsanov type transform to obtain a PDE with random coefficient instead of a SPDE to study blow-up. In this linear noise case, the blow-up criterion \eqref{Blow-up criterion common} also holds true. 			
			By direct computation, we have $\HH u_x=\Lambda u$ for the fractional Laplace operator $\Lambda=(-\Delta)^{\frac{1}{2}}$. Hence the blow-up criterion \eqref{Blow-up criterion common} becomes 
			\begin{equation*}
				\textbf{1}_{\left\{\limsup_{t\rightarrow \tau^*}\|u(t)\|_{H^{s}}=\infty\right\}}=\textbf{1}_{\left\{\limsup_{t\rightarrow \tau^*}\|u_x(t)\|_{L^\infty}+\|\Lambda u(t)\|_{L^\infty}=\infty\right\}}\ \p-a.s.
			\end{equation*}
	This motivates us to consider blow-up for the particular case $\limsup_{t\rightarrow \tau^*}\|\Lambda u(t)\|_{L^\infty}=\infty$. Actually the case we obtain, as shown in \eqref{blow:up},  is $\limsup_{t\rightarrow \tau^*}\max_{x\in \mathbb{R}}\Lambda u(t)=+\infty$. So far it is still not clear if $\limsup_{t\rightarrow \tau^*}\min_{x\in \mathbb{R}}\Lambda u(t)=-\infty$ holds for blow-up.

			\item  The key identity used in our analysis on blow-up is originally introduced in  \cite[Proposition 3.5]{Silvestre-Vicol-TAMS}, where the authors show that $C^1$ global solutions to \eqref{linear:SCCF:problem} cannot exists.  The precise statement of this identity in our stochastic context is given in \eqref{identity:SV}.  Although the idea towards the proof of \eqref{identity:SV} is similar to \cite{Silvestre-Vicol-TAMS}, we cannot assume without loss of generality (as in \cite[Proposition 3.5]{Silvestre-Vicol-TAMS}) that $\tilde{v}(z_0) = 0$ which simplifies the proof of \eqref{identity:SV}. The main reason relies on the fact that in the stochastic setting $z_0$ depends on $t$ and $\omega\in\Omega$. As a consequence, for different $\omega$, the time $t$ such that $\tilde{v}(z_0) = 0$ may be different, and therefore we cannot assume that $\tilde{v}(z_0) = 0$. On the other hand, if we fix $t$ such that $\tilde{v}(z_0) = 0$, then $\tilde{v}(z_0) = 0$ may not hold almost surely, which brings further obstacles in proving \eqref{identity:SV}.

				\item For trivial $b^*=0$, which means $b(t)\equiv0$, Theorem \ref{linear:theorem:blowup} recovers the deterministic result \cite[Theorem 3.7]{Silvestre-Vicol-TAMS}. Due to the presence of the noise term $b(t)u {\rm d}W$, the condition \eqref{condition:blow:up} is more restrictive compared to the deterministic case where $\Lambda u_{0}(x_0)>0$ suffices to show the finite time blow-up, \cite[Theorem 3.7]{Silvestre-Vicol-TAMS}. Therefore, although finite time singularities can be shown in the stochastic case it is somehow restrictive and strongly depends on the choice of the coefficient $b(t)$. 
			
		\end{itemize}
		
	\end{Remark}

	Regarding question \ref{Q:noise dependence}, we have the following partial (negative) answer:
	\begin{Theorem}[Weak instability]\label{Weak instability}
		Let $\s=(\Omega, \mathcal{F},\{\mathcal{F}_t\}_{t\geq0},\p, \W)$ be a fixed stochastic basis and $s>3$. If $h$ satisfies Assumption \ref{Assumption-2}, then at least one of the following properties holds true.
		\begin{enumerate}
			\item For any $R\gg 1$, the $R$-exiting time is not strongly stable for the zero solution to \eqref{SCCF problem} in the sense of Definition \ref{Definition stability of exiting time};
			\item There is a $T>0$ such that solution map $u_0\mapsto u$ defined by \eqref{SCCF problem} is not uniformly continuous as a map from $L^\infty(\Omega,H^s)$ into $L^1(\Omega;C([0,T],H^s))$. 
			More precisely, there exist two sequences of solutions $u^{1,n}(t,x)$ and $u^{2,n}(t,x)$, and two sequences of stopping time $\tau_{1,n}$ and $\tau_{2,n}$, such that
			\begin{itemize}
				\item For $i=1,2$, $\p\{\tau_{i,n}>0\}=1$ for each $n>1$. Besides,
				\begin{equation}\label{tau 1 2 n}
					\lim_{n\rightarrow\infty}\tau_{1,n}=\lim_{n\rightarrow\infty}\tau_{2,n}=\infty\ \ \p-a.s.
				\end{equation}
				\item For $i=1,2$, $u^{i,n}\in C([0,\tau_{i,n}],H^s)$ $\p-a.s.$, and
				\begin{equation}\label{sup u}
					\E\left(\sup_{t\in[0,\tau_{1,n}]}\|u^{1,n}(t)\|_{H^s}+\sup_{t\in[0,\tau_{2,n}]}\|u^{2,n}(t)\|_{H^s}\right)\lesssim 1.
				\end{equation}
				\item At initial time $t=0$,
				\begin{equation}\label{same initail data}
					\lim_{n\rightarrow\infty}\E\|u^{1,n}(0)-u^{2,n}(0)\|_{H^s}^2=0.
				\end{equation}
				\item When $t>0$,
				\begin{equation}\label{sup sin t}
					\liminf_{n\rightarrow\infty}\E\sup_{t\in[0,T\wedge\tau_{1,n}\wedge\tau_{2,n}]}
					\|u^{1,n}(t)-u^{2,n}(t)\|_{H^s}
					\gtrsim \sup_{t\in[0,T]}|\sin(t)|.
				\end{equation}
			\end{itemize}
		\end{enumerate}
		
	\end{Theorem}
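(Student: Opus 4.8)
The plan is to prove the dichotomy by showing that if property (1) fails then property (2) must hold. Note first that $u\equiv 0$ solves \eqref{SCCF problem}: either of the first two bullets of Assumption \ref{Assumption-2} forces $h(t,0)=0$, so the zero solution's $R$-exiting time is $\tau^R=\infty$ for every $R$. Hence ``(1) fails'' means there is a fixed $R\gg1$ for which the $R$-exiting time of the zero solution is strongly stable in the sense of Definition \ref{Definition stability of exiting time}: whenever $v_{0,n}\to 0$ in $H^{s'}$ for all $s'<s$ $\p$-a.s., the corresponding $R$-exiting times converge to $+\infty$ $\p$-a.s. All of what follows will be fed into this implication.

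I would then fix smooth compactly supported $\phi,\psi$ with $\HH\psi$ bounded away from $0$ on $\operatorname{supp}\phi$, fix reals $b_1\neq b_2$ (calibrated at the end together with a small $T>0$), and set
\[
 u^{i,n}_{\mathrm{app}}(t,x)=b_i n^{-1}\psi(x)+n^{-s}\cos\!\big(n\varphi^{i,n}(t,x)\big)\phi(x),\qquad i=1,2,
\]
where $\varphi^{i,n}$ solves the eikonal equation $\varphi^{i,n}_t+\big(\HH(b_i n^{-1}\psi)\big)\varphi^{i,n}_x=0$, $\varphi^{i,n}(0,x)=x$, so that $n\varphi^{i,n}(t,x)=nx-b_i(\HH\psi)(x)\,t+O(n^{-1})$. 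A direct computation, as in the deterministic non-uniform-dependence theory, shows that $u^{i,n}_{\mathrm{app}}$ solves $\partial_t u+(\HH u)u_x=0$ up to an error $E^{i,n}$ with $\|E^{i,n}\|_{H^s}\to0$ (much smaller in $H^{\sigma_0}$), that $\|u^{i,n}_{\mathrm{app}}(t)\|_{H^s}\lesssim1$ uniformly, that $u^{i,n}_{\mathrm{app}}(0)\to0$ in every $H^{s'}$ with $s'<s$ and — crucially — in $H^{\sigma_0}$ and $H^{\sigma_0+1}$ (here $\sigma_0<7/4<s-1$ is used), that $\|u^{1,n}_{\mathrm{app}}(0)-u^{2,n}_{\mathrm{app}}(0)\|_{H^s}=|b_1-b_2|n^{-1}\|\psi\|_{H^s}\to0$, and that the two phases produce the separation $\|u^{1,n}_{\mathrm{app}}(t)-u^{2,n}_{\mathrm{app}}(t)\|_{H^s}\gtrsim|\sin t|$ on $[0,T]$.

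Now let $u^{i,n}$ be the unique maximal pathwise solution of \eqref{SCCF problem} with $u^{i,n}(0)=u^{i,n}_{\mathrm{app}}(0)$, which exists by Theorem \ref{Local pathwise solution} since Assumption \ref{Assumption-2} implies Assumption \ref{Assumption-1}, and let $\tau_{i,n}\triangleq\tau^R_{i,n}$ be its $R$-exiting time. Because $u^{i,n}(0)\to0$ in all $H^{s'}$, $s'<s$, and $\|u^{i,n}(0)\|_{H^s}\lesssim1\ll R$, the assumed strong stability of the zero solution yields $\p\{\tau_{i,n}>0\}=1$ and $\tau_{i,n}\to\infty$ $\p$-a.s., i.e. \eqref{tau 1 2 n}; the definition of $\tau_{i,n}$ gives $\sup_{[0,\tau_{i,n}]}\|u^{i,n}\|_{H^s}\le R$, hence \eqref{sup u}; and \eqref{same initail data} is immediate. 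It remains to prove \eqref{sup sin t}, i.e. that the deterministic separation is inherited by $u^{1,n},u^{2,n}$.

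This is the heart of the matter and the main obstacle. One cannot expect $u^{i,n}$ to stay $H^s$-close to $u^{i,n}_{\mathrm{app}}$, since Assumption \ref{Assumption-2} only makes $\|h(t,u^{i,n})\|_{\LL_2(\U;H^{\sigma_0})}$ exponentially small while $\|h(t,u^{i,n})\|_{\LL_2(\U;H^s)}$ need only be bounded, so an $O(1)$ amount of high-frequency noise may live in $u^{i,n}$ in the $H^s$-topology. The remedy is a two-scale/averaging argument. First, a bootstrap estimate for $u^{i,n}-u^{i,n}_{\mathrm{app}}$ in $H^{\sigma_0}$ — and, by interpolating the noise bound, in $H^{r}$ for every $r<s$ — on $[0,T\wedge\tau_{1,n}\wedge\tau_{2,n}]$, using the commutator structure of $(\HH u)u_x$, the smallness of $E^{i,n}$ and of $\|u^{i,n}_{\mathrm{app}}\|_{H^{\sigma_0+1}}$, and the exponentially small $H^{\sigma_0}$ noise bound, shows that $\E\sup\|u^{i,n}-u^{i,n}_{\mathrm{app}}\|_{H^{\sigma_0}}^2$ is exponentially small; in particular $u^{i,n}$ is uniformly small in $W^{1,\infty}$ and the mean-zero fluctuation $w^{i,n}\triangleq u^{i,n}-\E u^{i,n}$ obeys $\E\|w^{i,n}\|_{H^{\sigma_0}}^2=o(n^{-1})$. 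Stopping both solutions at the common time $t\wedge\tau_{1,n}\wedge\tau_{2,n}$, the stochastic integrals are genuine martingales, so the stopped mean $\E u^{i,n}(t\wedge\tau_{1,n}\wedge\tau_{2,n})$ solves the CCF equation with the extra ``Reynolds'' forcing $-\E[(\HH w^{i,n})w^{i,n}_x]$, up to an $o(1)$ error from the indicator $\mathbf 1_{\{t<\tau_{1,n}\wedge\tau_{2,n}\}}$ controlled by $\p\{\tau_{1,n}\wedge\tau_{2,n}\le t\}\to0$. Projecting onto frequencies $|\xi|\sim n$, this forcing is a factor $n^{-1}$ smaller than the genuine transport term that drives the phase (because $w^{i,n}$ is tiny at low frequencies and in $W^{1,\infty}$), so the frequency-$n$ part of $\E u^{i,n}(t\wedge\tau_{1,n}\wedge\tau_{2,n})$ still transports at speed $\approx b_i n^{-1}\HH\psi$, giving $\|\E u^{1,n}(t\wedge\tau_{1,n}\wedge\tau_{2,n})-\E u^{2,n}(t\wedge\tau_{1,n}\wedge\tau_{2,n})\|_{H^s}\gtrsim|\sin t|-o(1)$. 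Jensen's inequality then yields $\E\sup_{[0,T\wedge\tau_{1,n}\wedge\tau_{2,n}]}\|u^{1,n}-u^{2,n}\|_{H^s}\ge\sup_{t\in[0,T]}\|\E u^{1,n}(t\wedge\tau_{1,n}\wedge\tau_{2,n})-\E u^{2,n}(t\wedge\tau_{1,n}\wedge\tau_{2,n})\|_{H^s}\gtrsim\sup_{[0,T]}|\sin t|-o(1)$, and $n\to\infty$ gives \eqref{sup sin t}. The genuinely delicate step is to make rigorous that the mean-zero, $O(1)$-in-$H^s$ fluctuation $w^{i,n}$ does not feed back into the low-frequency transport and destroy the $\sin t$-separation; this is exactly where the exponential $H^{\sigma_0}$ bound of Assumption \ref{Assumption-2}, with $\sigma_0>3/2$ controlling $W^{1,\infty}$, is indispensable.
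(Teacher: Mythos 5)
Your setup is sound: the reduction of the dichotomy to ``strong stability of the $R_0$-exiting time at zero implies failure of uniform continuity'', the construction of a high-frequency/low-frequency approximate solution, and the verification of \eqref{tau 1 2 n}, \eqref{sup u} and \eqref{same initail data} all match the paper's strategy. (The paper's low-frequency part solves the CCF equation itself with datum $-\HH(mn^{-1}\tilde\phi(x/n^\delta))$, the dilation $n^\delta$ being used so that $\HH u_l(0)$ is exactly the constant $mn^{-1}$ on the support of the high-frequency bump and the phase is simply $nx-mt$; your variable-coefficient eikonal phase is workable in principle but is a difference of detail.)

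The gap is in your proof of \eqref{sup sin t}. You assert that one cannot expect $u^{i,n}$ to remain $H^s$-close to the approximate solution, and you replace this closeness by an averaging argument on $\E u^{i,n}$. Two problems. First, the averaging does not repair the $O(1)$ $H^s$-error you concede may be present: the difference $v=u^{i,n}-u^{i,n}_{\mathrm{app}}$ satisfies an equation whose drift part, $-\big[(\HH u^{i,n})u^{i,n}_x-(\HH u^{i,n}_{\mathrm{app}})\partial_x u^{i,n}_{\mathrm{app}}\big]+E^{i,n}$, is not mean-zero, so $\E v$ need not be small in $H^s$ even after the martingale part is removed; your claim that the frequency-$n$ component of $\E u^{i,n}$ ``still transports at speed $\approx b_i n^{-1}\HH\psi$'' is exactly the statement that needs proof and is not supplied. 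Second, the premise is false: $H^s$-closeness of $u^{i,n}$ to $u^{i,n}_{\mathrm{app}}$ \emph{can} be proved, and this is how the paper closes the argument. One estimates $v$ in \emph{two} norms on $[0,T_l\wedge\tau^{m,n}_{R_0}]$: in $H^{\sigma_0}$, where the exponential bound of Assumption \ref{Assumption-2} makes the noise contribution smaller than any prescribed power of $n$ and yields $\E\sup\|v\|^2_{H^{\sigma_0}}\lesssim n^{2r_s}$ with $r_s=-s-1+\sigma_0+\delta<0$; and in the \emph{higher} norm $H^{2s-\sigma_0}$, where only the crude bound $\E\sup\|v\|^2_{H^{2s-\sigma_0}}\lesssim n^{2s-2\sigma_0}$ inherited from the size of the data is needed. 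The interpolation $\|v\|_{H^s}\le\|v\|_{H^{\sigma_0}}^{1/2}\|v\|_{H^{2s-\sigma_0}}^{1/2}$ then gives $\E\sup\|v\|_{H^s}\lesssim n^{(\delta-1)/2}\to0$ for $\delta<1$, after which \eqref{sup sin t} follows from the explicit separation of the approximate solutions by the triangle inequality. Without this (or an equivalent) quantitative control in a norm at least as strong as $H^s$, your Reynolds-forcing step cannot be closed.
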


	\begin{Remark}\label{remark on weak instability}
	The idea behind Theorem \ref{Weak instability} can be understood as follows: one cannot improve the continuity of the map $u_{0}\mapsto u$, and simultaneously, the		
		stability of the exiting time at $u\equiv0$. 
		Hereafter, we briefly outline the main difficulties encountered in the proof of Theorem \ref{Weak instability} and the main strategies used to tackle them.
		\begin{itemize}
			
			\item For system \eqref{SCCF problem}, we do not know how to obtain any explicit expression of the solutions. Therefore, to establish \eqref{sup sin t}, the idea relies on finding two sequences of approximate solutions $\{u_{m,n}\}$ ($m\in\{1,2\}$)  such that the difference between $u_{m,n}$ and the actual solution $u^{m,n}$ tends to zero as $n\to\infty$, from which one can prove \eqref{sup sin t} by using $\{u_{m,n}\}$ rather than $\{u^{m,n}\}$. In this way, the first difficulty lies in the construction of such approximate solutions $\{u_{m,n}\}$.
			In this work, by some delicate calculation, we are able to construct two sequences of approximate solutions $\{u_{m,n}\}$ such that the actual solutions $\{u^{m,n}\}$ satisfy 
			\begin{align}
				u^{m,n}(0)=u_{m,n}(0),\ \ 	\lim_{n\rightarrow\infty}\E\sup_{[0,\tau_{m,n}]}\|u^{m,n}-u_{m,n}\|_{H^s}
				=0,\label{non uniform remark equ}
			\end{align}
			where $u^{m,n}$ exists at least on $[0,\tau_{m,n}]$. Technically, since the equation involves the non-local Hilbert transform $\HH$ and the problem is defined on $\R$, the construction of approximate solutions $u_{m,n}$ is much more elaborated than the constructions in \cite{Miao-Rohde-Tang-2021-arXiv,Rohde-Tang-2020-JDDE,Tang-2020-arxiv}.

			\item The second difficulty we have to surpass is that we need to guarantee that $\inf_{n}\tau_{m,n}>0$ almost surely when dealing with \eqref{non uniform remark equ}. This obstacle  comes from the lack of lifespan estimates which we believe is quite a common issue in SPDEs. Indeed, in
			deterministic cases, one can easily obtain the lifespan estimate, from which it is not difficult to find a common interval $[0,T]$ such that all actual solutions exist on $[0,T]$ (see for example Lemma  \ref{lemma ul Hr}). In the stochastic setting, we are not able to show the precise estimate. The key observation to surpass this difficulty is that, the property $\inf_{n}\tau_{m,n}>0$ can be connected to the stability property of the exiting time (see Definition \ref{Definition stability of exiting time}). The condition comprising the fact that the $R_0$-exiting time is strongly stable at the zero solution will be used to provide a common existence time $T>0$ such that for all $n$, $u^{m,n} $ exists up to $T$ (see Lemma \ref{exiting time infty lemma} below). Therefore, to prove Theorem \ref{Weak instability}, we will show that, if the $R_0$-exiting time is strongly stable at the zero solution for some $R_0\gg1$,  then the solution map $u_0\mapsto u$ defined by \eqref{SCCF problem} cannot be uniformly continuous.

		\end{itemize}

	\end{Remark}

	\subsection*{Plan of the paper}
	
	We outline the structure of the paper.  In Section \ref{Preliminaries} we provide some relevant preliminaries and recall well-known estimates that will be employed throughout the paper. In Section \ref{local existence section} we show the existence and uniqueness of local pathwise solutions and derived a blow-up criterion proving Theorem \ref{Local pathwise solution}. We will divide the proof into several subsections. First we use an approximation scheme and perform uniform bounds in Subsection \ref{section:appro and estimate}. Next, in Subsection \ref{subsec:convergence} we show the convergence of the approximated solutions and afterwards in Subsection \ref{subsec:global:path} we prove the global pathwise solutions to the cur-off problem. We conclude the proof of Theorem \ref{Local pathwise solution} in Subsection \ref{Local pathwise solution}. Section \ref{Section:global:strong} is devoted to study the effect of strong noise and prove Theorem \ref{Non blowup}. In Section \ref{Sec:blow:up:linear} we show that finite time singularity occur in the case of linear noise thus proving Theorem \ref{linear:theorem:blowup}. To conclude the article, in Section \ref{sect:weak instability}, we prove Theorem \ref{Weak instability}. The proof is divided into Subsection \ref{subsec:approximate:weak} - Subsection \ref{conclude:weak:theorem}.

	\section{Preliminaries}\label{Preliminaries}
	For any $\varepsilon\in(0, 1)$, $J_{\varepsilon}$ is the Friedrichs mollifier defined by $J_{\varepsilon}f(x)=j_{\varepsilon}\star f(x)$,
	where $\star$ stands for the convolution, $j_{\varepsilon}(x)=\frac{1}{\e}j(\frac{x}{\e})$ and $j(x)$ is a Schwartz function satisfying $0\leq\widehat{j}(\xi)\leq1$ for all $\xi\in \R$ and $\widehat{j}(\xi)=1$ for any $|\xi|\leq1$, where $\widehat{f}$ denotes the Fourier transform of $f$.
	It is obvious that  $\widehat{j}_{\varepsilon}(\xi)=\widehat{j}(\varepsilon \xi)$.  Moreover, for any $u\in H^s$, we have, cf. \cite{Tang-2018-SIMA,Tang-2020-arxiv},
	\begin{align}
		\|u-J_{\varepsilon}u\|_{H^r}&\sim o(\varepsilon^{s-r}),\ \ r\leq s,\label{mollifier property 1}\\
		\|J_{\varepsilon}u\|_{H^{r}}&\lesssim  O(\varepsilon^{s-r})\|u\|_{H^{s}},\ \ r> s,\label{mollifier property 2}\\
		(J_{\varepsilon}f, g)_{L^2}&=(f, J_{\varepsilon}g)_{L^2}.\label{mollifier property 4}
	\end{align}
	The Hilbert transform is defined as 
	\begin{equation}\label{Hilbert:transform}
		\HH f(x)= \frac{1}{\pi}\mbox{p.v.}\int_{\mathbb{R}}\frac{f(y)}{y-x} \, {\rm d}y.
	\end{equation}
	The fractional differential operator $\Lambda^{\alpha}=(-\Delta)^{\frac{\alpha}{2}}$ is defined as the following singular integral operator 
	\begin{equation}\label{frac:integral:rep}
		\Lambda^{\alpha} f(x)= c_{\alpha} \mbox{p.v.} \int_{\mathbb{R}}\frac{f(x)-f(y)}{|x-y|^{1+\alpha}}\, {\rm d}y,
	\end{equation}
	where the  constant $c_\alpha=\frac{4^{\frac{\alpha}{2}}\Gamma(\frac{1}{2}+\alpha/2)}{\sqrt{\pi}\abs{\Gamma(-\alpha/2)}}$ is a normalization constant and $\Gamma$ represents the classical gamma function. In particular when $\alpha=1$, $\HH f_{x}=-\Lambda f.$ Since $J_\e$, $\HH$ and $D^s$ can be characterized by their Fourier multipliers, it is  easy to  see 
	\begin{equation}\label{H Je Ds}
		[D^s,\mathcal{H}]=[D^s,J_\e]=[\partial_x,\mathcal{H}]=[\partial_x,J_\e]=[J_\e,\mathcal{H}]=0,
	\end{equation}
	and for any $s\geq 0$,
	\begin{align}
		\|\HH u\|_{H^s}\leq \|u\|_{H^s},\ 
		\|J_{\varepsilon}u\|_{H^s}\leq \|u\|_{H^s}.\label{H Je Hs norm}
	\end{align}

	\begin{Lemma}[Page 3 in \cite{Taylor-2011-PDEbook3}]\label{Je commutator} Let $J_\e$ be defined as in the above. Assume $g\in W^{1,\infty}$ and $f\in L^2$. Then for some $C>0$,
		\begin{align*}
			\|[J_{\varepsilon}, g]\partial_xf\|_{L^2}
			\leq C\|\partial_x g\|_{L^\infty}\|f\|_{L^2}.
		\end{align*}
	\end{Lemma}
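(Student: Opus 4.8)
\textit{Proof proposal.} The plan is to exploit the explicit convolution representation $J_{\varepsilon}f = j_\varepsilon\star f$ and to reduce the estimate to Young's convolution inequality, with the scaling of $j_\varepsilon$ providing the uniformity in $\varepsilon$. It suffices to prove the bound for $f\in C_c^\infty(\R)$ and then pass to general $f\in L^2$ by density, since $f\mapsto [J_\varepsilon,g]\partial_x f = j_\varepsilon'\star(gf) - J_\varepsilon(g'f) - g\,(j_\varepsilon'\star f)$ is a bounded operator on $L^2$ (a priori with $\varepsilon$-dependent norm) whenever $g\in W^{1,\infty}$. For smooth compactly supported $f$ one has the pointwise identity
\[
[J_\varepsilon,g]\partial_x f(x) = \int_\R j_\varepsilon(x-y)\big(g(y)-g(x)\big)\partial_y f(y)\,{\rm d}y,
\]
and integrating by parts in $y$ (the boundary terms vanish since $f$ has compact support) moves the derivative onto the kernel or onto $g$:
\[
[J_\varepsilon,g]\partial_x f(x) = \underbrace{\int_\R j_\varepsilon'(x-y)\big(g(y)-g(x)\big) f(y)\,{\rm d}y}_{=:I_1(x)} \;-\; \underbrace{\int_\R j_\varepsilon(x-y)\, g'(y)\, f(y)\,{\rm d}y}_{=:I_2(x)}.
\]

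The term $I_2$ is exactly $J_\varepsilon(g'f)$, so using $\norm{J_\varepsilon}_{L^2\to L^2}\le 1$ (a consequence of $0\le\widehat j\le 1$) gives $\norm{I_2}_{L^2}\le\norm{g'f}_{L^2}\le\norm{\partial_x g}_{L^\infty}\norm{f}_{L^2}$. For $I_1$ we use that $g\in W^{1,\infty}$ is Lipschitz with constant $\norm{\partial_x g}_{L^\infty}$, so $|g(y)-g(x)|\le\norm{\partial_x g}_{L^\infty}|x-y|$, and hence
\[
|I_1(x)|\le\norm{\partial_x g}_{L^\infty}\int_\R |x-y|\,\big|j_\varepsilon'(x-y)\big|\,|f(y)|\,{\rm d}y = \norm{\partial_x g}_{L^\infty}\,(\Psi_\varepsilon\star|f|)(x),\qquad \Psi_\varepsilon(z):=|z|\,|j_\varepsilon'(z)|.
\]
Writing $j_\varepsilon'(z)=\varepsilon^{-2}j'(z/\varepsilon)$ and substituting $w=z/\varepsilon$ shows $\norm{\Psi_\varepsilon}_{L^1}=\int_\R |w|\,|j'(w)|\,{\rm d}w=:C_0$, a finite constant independent of $\varepsilon$ because $j$ is Schwartz. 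Young's inequality then yields $\norm{I_1}_{L^2}\le C_0\,\norm{\partial_x g}_{L^\infty}\norm{f}_{L^2}$, and adding the two estimates proves the lemma with $C=C_0+1$.

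The main (and essentially the only) subtle point is the passage from $C_c^\infty$ to $L^2$: one must first record that $[J_\varepsilon,g]\partial_x$ is a well-defined bounded operator on $L^2$ — which follows from the decomposition displayed in the first paragraph together with the smoothing property of $J_\varepsilon$ — so that the inequality, once established on a dense subset, extends to all of $L^2$ by continuity. The remainder is the routine scaling bookkeeping for $\norm{\Psi_\varepsilon}_{L^1}$, which is precisely where the $\varepsilon$-uniformity (and the integrability hypothesis on $j$) enters.
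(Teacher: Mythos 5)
Your argument is correct: the pointwise kernel identity, the integration by parts producing $I_1-I_2$, the Lipschitz bound $|g(y)-g(x)|\le\|\partial_x g\|_{L^\infty}|x-y|$, the $\varepsilon$-uniform computation $\|\,|z|\,|j_\varepsilon'(z)|\,\|_{L^1}=\int_\R|w||j'(w)|\,{\rm d}w$, and Young's inequality all check out, and the density step is handled properly. The paper itself gives no proof of this lemma (it is quoted from Taylor's book), and what you have written is precisely the standard Friedrichs-commutator argument that the cited reference uses, so there is nothing substantive to compare beyond confirming correctness.
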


	We also recall the following well-known estimates.
	\begin{Lemma}[\cite{Kato-Ponce-1988-CPAM,Kenig-Ponce-Vega-1991-JAMS}]\label{Kato-Ponce commutator estimate}
		If $f,g\in H^s\bigcap W^{1,\infty}$ with $s>0$, then for $p,p_i\in(1,\infty)$ with $i=2,3$ and 
		$\frac{1}{p}=\frac{1}{p_1}+\frac{1}{p_2}=\frac{1}{p_3}+\frac{1}{p_4}$, we have
		$$
		\|\left[D^s,f\right]g\|_{L^p}\leq C_{s,p}(\|\nabla f\|_{L^{p_1}}\|D^{s-1}g\|_{L^{p_2}}+\|D^sf\|_{L^{p_3}}\|g\|_{L^{p_4}}),$$
		and
		$$\|D^s(fg)\|_{L^p}\leq C_{s,p}(\|f\|_{L^{p_1}}\|D^s g\|_{L^{p_2}}+\|D^s f\|_{L^{p_3}}\|g\|_{L^{p_4}}).$$
	\end{Lemma}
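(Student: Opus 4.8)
This lemma is the classical Kato--Ponce commutator estimate together with the fractional Leibniz rule, and the plan is to reproduce the standard harmonic-analysis proof via a Littlewood--Paley / Bony paraproduct decomposition. Writing $\Delta_j$ for the dyadic frequency projections and $S_j=\sum_{k<j}\Delta_k$ for the low-frequency cutoffs, I would first record Bony's decomposition of a product, $fg=T_fg+T_gf+R(f,g)$, where $T_fg=\sum_j S_{j-2}f\,\Delta_j g$ is the paraproduct and $R(f,g)=\sum_{|j-k|\le2}\Delta_j f\,\Delta_k g$ collects the high--high interactions. Since $D^s=(1-\partial_{xx}^2)^{s/2}$ is inhomogeneous, I would split into low and high frequencies at the outset: on low frequencies the symbol $(1+\xi^2)^{s/2}$ is smooth and bounded, so $D^s$ acts like a harmless bounded Fourier multiplier and the corresponding contributions are controlled trivially by the $L^{p_i}$ norms; the genuine work is at high frequencies, where $(1+\xi^2)^{s/2}$ matches the homogeneous weight $|\xi|^s$ up to acceptable lower-order errors.

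For the commutator I would pass to the bilinear Fourier-multiplier representation. With $\widehat f$ carrying frequency $\xi$ and $\widehat g$ frequency $\eta$, the operator $(f,g)\mapsto[D^s,f]g$ has symbol $m(\xi,\eta)=(1+|\xi+\eta|^2)^{s/2}-(1+|\eta|^2)^{s/2}$, and I would decompose the frequency plane into the three standard regions. In the region $|\xi|\lesssim|\eta|$ (the factor $f$ at lower frequency), a first-order Taylor expansion of $(1+|\cdot|^2)^{s/2}$ gives $m(\xi,\eta)\approx s(1+|\eta|^2)^{(s-2)/2}\eta\xi$, of size $|\eta|^{s-1}|\xi|$; after normalizing by $|\eta|^{s-1}|\xi|$ the remaining factor is a Coifman--Meyer symbol, so this piece equals a bounded bilinear multiplier applied to $\partial_x f$ and $D^{s-1}g$, producing the term $\|\nabla f\|_{L^{p_1}}\|D^{s-1}g\|_{L^{p_2}}$. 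In the region $|\eta|\lesssim|\xi|$ one has $m(\xi,\eta)\approx|\xi|^s$, and the analogous normalization yields a multiplier acting on $D^s f$ and $g$, giving $\|D^s f\|_{L^{p_3}}\|g\|_{L^{p_4}}$. The high--high region $|\xi|\sim|\eta|$ is symmetric and is absorbed into the second term. In each region I would verify the Mihlin--H\"ormander-type derivative bounds $|\partial_\xi^a\partial_\eta^b\,\tilde m|\lesssim(|\xi|+|\eta|)^{-a-b}$ on the normalized symbol $\tilde m$ and then invoke boundedness of bilinear Coifman--Meyer multipliers from $L^{p_1}\times L^{p_2}$ into $L^p$ under $1/p=1/p_1+1/p_2$, which follows from the $T(1)$ theorem or directly from vector-valued (Fefferman--Stein) Littlewood--Paley inequalities applied to the paraproduct pieces.

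The fractional Leibniz estimate is proved by the same paraproduct splitting applied to $D^s(fg)$ directly. In $T_fg$ the high frequency is carried by $g$, so $D^s$ effectively lands on $g$ and, using Bernstein to control the low-frequency factor $S_{j-2}f$, this piece is bounded by $\|f\|_{L^{p_1}}\|D^s g\|_{L^{p_2}}$; in $T_gf$ and in the remainder $R(f,g)$ the top derivative falls on $f$ (or may be placed on either factor in $R$), yielding $\|D^s f\|_{L^{p_3}}\|g\|_{L^{p_4}}$. Summing the three contributions gives the claimed bound.

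I expect the main obstacle to be the off-diagonal and endpoint bookkeeping rather than the conceptual decomposition. Concretely, two points require care. First, because $D^s$ is the inhomogeneous Bessel potential, the Taylor-expansion errors and the low-frequency corrections must be separated from the leading homogeneous behavior, so that the normalized symbols genuinely satisfy Coifman--Meyer bounds uniformly across regions. Second, the admissible exponents include the $L^\infty$ endpoints on the factors $\nabla f$ and $g$ (reflected in the hypothesis that only $p,p_2,p_3\in(1,\infty)$ need be finite, while $p_1,p_4$ may equal $\infty$); at these endpoints the basic Coifman--Meyer theory fails and one must use the Kenig--Ponce--Vega refinement, replacing the $L^\infty$ factor by its action through $S_j$ on the appropriate paraproduct and controlling the output via a maximal-function / Fefferman--Stein argument. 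Once these two technical points are settled, both estimates follow by summing the dyadic pieces.
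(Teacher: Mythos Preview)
The paper does not provide its own proof of this lemma; it simply cites the original references \cite{Kato-Ponce-1988-CPAM,Kenig-Ponce-Vega-1991-JAMS} and states the result as a known tool. Your proposal outlines precisely the standard harmonic-analysis proof from those references (Bony paraproduct decomposition, Coifman--Meyer bilinear multiplier bounds, and the Kenig--Ponce--Vega endpoint refinement), so there is nothing to compare: your sketch is correct and is essentially the argument one would find by following the citations.
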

\begin{Lemma}\label{Huux Hs inner product}
	Let $s>3$. Let  $J_\e$ be the Friedrichs mollifier defined before. 
	There is a constant $Q=Q(s)>0$ such that
	\begin{equation}
		\left|\left((\HH u)u_x, u\right)_{H^{s-1}}\right|
		\leq Q(\|u_x\|_{L^\infty}+\|\HH u_x\|_{L^\infty})\|u\|^2_{H^{s-1}},\label{Huux u Hs-1}
		\end{equation}
	and for all $\e>0$,
	\begin{equation}
		\left|\left(J_\e  \left[(\HH u)u_x\right], J_\e  u\right)_{H^s}\right|
		\leq Q(\|u_x\|_{L^\infty}+\|\HH u_x\|_{L^\infty})\|u\|^2_{H^s}\label{Huux u Hs}.
	\end{equation}
\end{Lemma}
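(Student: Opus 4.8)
The plan is to prove the two commutator-type estimates \eqref{Huux u Hs-1} and \eqref{Huux u Hs} by reducing everything to the standard Kato--Ponce commutator estimate (Lemma \ref{Kato-Ponce commutator estimate}) together with the $H^s$-boundedness and the commuting properties \eqref{H Je Ds}--\eqref{H Je Hs norm} of $\HH$ and $J_\e$. The two inequalities are essentially the same computation, the second one just carrying along the mollifier; so I would first prove \eqref{Huux u Hs-1} carefully and then indicate how \eqref{Huux u Hs} follows with only notational changes.

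For \eqref{Huux u Hs-1}, write $\left((\HH u)u_x,u\right)_{H^{s-1}} = \left(D^{s-1}\big((\HH u)u_x\big), D^{s-1}u\right)_{L^2}$. The key move is to insert and remove $(\HH u)\,D^{s-1}u_x$ to produce a commutator:
\begin{equation*}
D^{s-1}\big((\HH u)u_x\big) = \big[D^{s-1},\HH u\big]u_x + (\HH u)\,D^{s-1}u_x .
\end{equation*}
The first (commutator) term is handled by Lemma \ref{Kato-Ponce commutator estimate} with $p=2$, $f=\HH u$, $g=u_x$: it is bounded in $L^2$ by $\|\partial_x(\HH u)\|_{L^\infty}\|D^{s-2}u_x\|_{L^2} + \|D^{s-1}(\HH u)\|_{L^2}\|u_x\|_{L^\infty}$, and using $\partial_x\HH u = \HH u_x$ together with $\|D^{s-1}\HH u\|_{L^2}=\|\HH u\|_{H^{s-1}}\le\|u\|_{H^{s-1}}$ (from \eqref{H Je Hs norm}) this is $\lesssim (\|\HH u_x\|_{L^\infty}+\|u_x\|_{L^\infty})\|u\|_{H^{s-1}}$. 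Pairing with $D^{s-1}u$ in $L^2$ by Cauchy--Schwarz gives the desired bound for this piece. For the second term, the standard transport-structure trick applies: integrate by parts,
\begin{equation*}
\left((\HH u)\,D^{s-1}u_x,\, D^{s-1}u\right)_{L^2} = \tfrac12\int_{\R}(\HH u)\,\partial_x\big((D^{s-1}u)^2\big)\,{\rm d}x = -\tfrac12\int_{\R}(\HH u_x)\,(D^{s-1}u)^2\,{\rm d}x,
\end{equation*}
whose absolute value is $\le \tfrac12\|\HH u_x\|_{L^\infty}\|u\|_{H^{s-1}}^2$. Collecting the two contributions yields \eqref{Huux u Hs-1} with an explicit constant $Q=Q(s)$ coming from the Kato--Ponce constant $C_{s-1,2}$.

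For \eqref{Huux u Hs}, I would expand $\big(J_\e[(\HH u)u_x],J_\e u\big)_{H^s} = \big(D^s J_\e[(\HH u)u_x],\,D^s J_\e u\big)_{L^2}$ and, using that $J_\e$ and $D^s$ commute \eqref{H Je Ds} and that $J_\e$ is self-adjoint on $L^2$ \eqref{mollifier property 4}, rewrite the pairing so that one mollifier acts on each factor; then insert and remove $J_\e\big((\HH u)\,D^s u_x\big)$ to split off a $J_\e$-commutator term $[J_\e,\HH u]\partial_x D^{s-1}(\cdot)$-type piece (controlled by Lemma \ref{Je commutator}, uniformly in $\e$) and a $D^s$-commutator term (controlled by Lemma \ref{Kato-Ponce commutator estimate} as above), leaving the symmetric transport term $\big((\HH u)D^s J_\e u_x, D^s J_\e u\big)_{L^2}$, which is again $-\tfrac12\int(\HH u_x)(D^s J_\e u)^2$ and bounded using \eqref{H Je Hs norm}. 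The main technical obstacle is the bookkeeping of these two distinct commutators ($[D^s,\cdot]$ and $[J_\e,\cdot]$) in the mollified estimate and, crucially, checking that \emph{all} bounds are uniform in $\e$ so that the constant $Q$ does not degenerate as $\e\to0$; this uniformity is exactly what Lemma \ref{Je commutator} and the contraction property $\|J_\e\cdot\|_{H^s}\le\|\cdot\|_{H^s}$ provide, and it is what makes $Q$ a legitimate deterministic constant usable later in Assumption \ref{Assumption-alpha} and in the a priori $H^s$ estimates. Everything else is routine application of the product/commutator calculus, so the argument is short once the splitting is set up correctly.
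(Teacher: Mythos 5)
Your proposal is correct and follows essentially the same route as the paper: the same splitting into a Kato--Ponce commutator $[D^{s}, \HH u]$ (resp. $[D^{s-1},\HH u]$), a mollifier commutator $[J_\e,\HH u]$ controlled uniformly in $\e$ by Lemma \ref{Je commutator}, and the remaining transport term killed by integration by parts to produce $-\tfrac12\int (\HH u_x)(D^s J_\e u)^2$. The only cosmetic difference is the order (the paper proves \eqref{Huux u Hs} first and obtains \eqref{Huux u Hs-1} by repeating the argument without the mollifier), which is immaterial.
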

\begin{proof}
	We first prove \eqref{Huux u Hs}.
	Due to  \eqref{H Je Ds} and \eqref{mollifier property 4}, we commute the operator  to derive
	\begin{align*}
		&\left(D^sJ_\e  
		\left[(\HH u)u_x\right],D^sJ_\e  u\right)_{L^2}\notag\\
		=&\left(\left[D^s,\HH u\right]u_x,D^sJ^2_\e u\right)_{L^2}+
		\left([J_\e ,\HH u]D^su_x, D^sJ_\e  u\right)_{L^2}
		+\left((\HH u)D^sJ_\e  u_x, D^sJ_\e  u\right)_{L^2}.
	\end{align*}
	Then it follows from Lemmas   \ref{Je commutator} and \ref{Kato-Ponce commutator estimate}, integration by parts, \eqref{H Je Hs norm} and $H^s\hookrightarrow W^{1,\infty}$ that
	\begin{equation*}
		\left|\left(\left[D^s,\HH u\right]u_x,D^sJ^2_\e u\right)_{L^2}\right|
		\lesssim
		\left(\|u_x\|_{L^{\infty}}+\|\HH u_x\|_{L^{\infty}}\right)\|u\|^2_{H^s},
	\end{equation*}
	\begin{equation*}
		\left|\left([J_\e ,\HH u]D^su_x, D^sJ_\e  u\right)_{L^2}\right|
		\lesssim\ \|\HH u_x\|_{L^{\infty}} \|u\|^2_{H^s},
	\end{equation*}
	and 
	\begin{equation*}
		\left|\left((\HH u)D^sJ_\e  u_x, D^sJ_\e  u\right)_{L^2}\right|
		\lesssim\ \|\HH u_x\|_{L^{\infty}} \|u\|^2_{H^s}.
	\end{equation*}
	Combining the above inequalities gives rise to \eqref{Huux u Hs}. For \eqref{Huux u Hs-1}, since $\left((\HH u)u_x, u\right)_{H^{s-1}}$ is well-defined in this case, by repeating the above analysis, one can easily obtain the \eqref{Huux u Hs-1} and the proof is therefore completed.
\end{proof}

\begin{Remark}
We remark that \eqref{Huux u Hs} will be used in the proof of blow-up criterion \eqref{Blow-up criterion common} (see \eqref{Huux u blow-up} below) and \eqref{Huux u Hs-1} will be used in the proof of Theorem \ref{Non blowup} (see \eqref{Huux u global existence} below). But it is worthwhile pointing out that \eqref{Huux u Hs-1} is also used implicitly in the proof of Theorem \ref{Non blowup}. We refer to Remark \ref{Remark global existence} for more details.
\end{Remark}

	Let us collect some identities and formulas regarding the Hilbert transform and the fractional Laplacian operator, cf. \cite{Silvestre-Vicol-TAMS,Cordoba-etal-2005-Annals}.
	The first one is the so-called Cotlar's identity
	\begin{equation}\label{hilbert:iden:2}
		2\HH \left( f (\HH f) \right)  = (\HH f)^{2}-f^{2}.
	\end{equation}

The second one is the following equality:

	\begin{Lemma}[Corollary 3.3, \cite{Silvestre-Vicol-TAMS}]\label{hilbert:iden:3}
		For any $f\in L^1(\mathbb{R};\mathbb{R})$, we have that for $g(x)=xf(x)$,
		\begin{equation*}
			\Lambda g (x) = x \Lambda f(x) -  \HH f(x) 
		\end{equation*}
	\end{Lemma}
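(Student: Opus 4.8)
The plan is to work directly from the singular-integral representations \eqref{Hilbert:transform} and \eqref{frac:integral:rep} of $\HH$ and $\Lambda=\Lambda^1$, using that the normalization constant is $c_1=\frac1\pi$, so that $\Lambda f(x)=\frac1\pi\,\mathrm{p.v.}\int_{\mathbb R}\frac{f(x)-f(y)}{|x-y|^2}\,dy$. I would first reduce to the case where $f$ is smooth with sufficient decay (say $f\in\mathcal S(\mathbb R)$), which is the only situation needed in the sequel since there $f$ will be a spatial derivative of an $H^s$ solution with $s>3$; for such $f$ all three quantities $\Lambda g$, $x\Lambda f$ and $\HH f$ are classically well defined, and the general $L^1$ statement then follows by a density argument together with the boundedness of the operators involved on the relevant spaces.

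For the core computation, write, with $g(y)=yf(y)$,
$$\Lambda g(x)=\frac1\pi\,\mathrm{p.v.}\int_{\mathbb R}\frac{xf(x)-yf(y)}{|x-y|^2}\,dy,$$
and perform the elementary splitting of the numerator
$$xf(x)-yf(y)=x\bigl(f(x)-f(y)\bigr)+(x-y)f(y).$$
The first piece contributes $\frac{x}{\pi}\,\mathrm{p.v.}\int_{\mathbb R}\frac{f(x)-f(y)}{|x-y|^2}\,dy=x\,\Lambda f(x)$ by \eqref{frac:integral:rep}. For the second piece, since $|x-y|^2=(x-y)^2$ we have $\frac{x-y}{|x-y|^2}=\frac1{x-y}$, so it equals $\frac1\pi\,\mathrm{p.v.}\int_{\mathbb R}\frac{f(y)}{x-y}\,dy=-\HH f(x)$ by \eqref{Hilbert:transform}. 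Adding the two contributions gives the claimed identity $\Lambda g(x)=x\Lambda f(x)-\HH f(x)$.

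The one point requiring genuine care — and the only real obstacle — is the legitimacy of splitting the original principal value into the sum of the two principal-value integrals: I would check that each of the two resulting integrals is individually convergent in the p.v. sense near $y=x$ (the first because $f(x)-f(y)=O(|x-y|)$ makes the integrand locally integrable, the second because it is the Hilbert transform of the $C^1\cap L^1$ function $f$), so that no spurious local contribution is lost in the decomposition, and that the tails are controlled by $f\in L^1$ together with the extra factor $x-y$ being cancelled. An alternative operator-level proof would instead use $\Lambda=-\HH\partial_x$, expand $\Lambda g=-\HH\partial_x(xf)=-\HH f-\HH(xf_x)$, and invoke the commutator relation $\HH(x\psi)=x\,\HH\psi$, valid whenever $\int_{\mathbb R}\psi\,dx=0$ (applied to $\psi=f_x$, using $\HH f_x=-\Lambda f$); I would nonetheless prefer the direct computation above, since it makes the hypothesis $f\in L^1$ natural and avoids the distributional subtleties of the commutator $[\HH,x\,\cdot\,]$.
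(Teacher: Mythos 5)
The paper does not actually prove this lemma: it is imported verbatim as Corollary~3.3 of \cite{Silvestre-Vicol-TAMS}, so there is no in-paper argument to compare yours against. Your direct computation is correct and supplies exactly the proof the cited reference gives in spirit: the normalization $c_1=\frac{4^{1/2}\Gamma(1)}{\sqrt{\pi}\,|\Gamma(-1/2)|}=\frac{1}{\pi}$ is right, the splitting $xf(x)-yf(y)=x\bigl(f(x)-f(y)\bigr)+(x-y)f(y)$ identifies the two pieces as $x\Lambda f(x)$ and $\frac{1}{\pi}\,\mathrm{p.v.}\!\int_{\R}\frac{f(y)}{x-y}\,{\rm d}y=-\HH f(x)$ under the sign convention \eqref{Hilbert:transform}, and your alternative route via $\Lambda=-\HH\partial_x$ and $\HH(x\psi)=x\HH\psi$ for $\int\psi=0$ also closes correctly. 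One small imprecision: the first piece $\frac{f(x)-f(y)}{|x-y|^2}$ is \emph{not} locally integrable near $y=x$ for $C^1$ functions $f$ (it is of size $|x-y|^{-1}$); it converges only as a principal value, because the leading term $-f'(x)/(y-x)$ is odd about $y=x$. Since you frame the whole verification in the p.v.\ sense anyway, this does not affect the validity of the decomposition, but the parenthetical justification should invoke that odd-symmetry cancellation rather than local integrability. The reduction to Schwartz $f$ is also the right move, since the statement for bare $f\in L^1$ is formal ($\Lambda f$ need not exist pointwise), and in the paper the lemma is only ever applied to the smooth function $\eta$ built from an $H^s$ solution with $s>3$.
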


	Finally, we recall the following estimate on the product of a  Schwartz function and a trigonometric function.
	\begin{Lemma}[\cite{Himonas-Kenig-2009-DIE}]\label{lemma uh Hr}
		Let $\delta>0$ and $\alpha\in\R$. Then for any $r\geq 0$ and any Schwartz function $\psi$, the following equation holds true:
		\begin{equation}\label{uh Hr estimate}
			\lim_{n\rightarrow\infty}n^{-\frac{\delta}{2}-r}\left\|\psi\left(\frac{x}{n^{\delta}}\right)\cos(nx-\alpha)\right\|_{H^r}=\frac{1}{\sqrt{2}}\|\psi\|_{L^2}.
		\end{equation}
		Moreover,  \eqref{uh Hr estimate}  also holds true when $\cos$ is replaced by $\sin$.
	\end{Lemma}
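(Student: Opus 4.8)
The plan is to compute the exact leading-order asymptotics of $\|f_n\|_{H^r}$ on the Fourier side, where $f_n(x)\triangleq\psi(x/n^\delta)\cos(nx-\alpha)$. Writing $\cos(nx-\alpha)=\tfrac12\big(e^{-i\alpha}e^{inx}+e^{i\alpha}e^{-inx}\big)$ and combining the scaling identity $\widehat{\psi(\cdot/n^\delta)}(\xi)=n^\delta\widehat\psi(n^\delta\xi)$ with the modulation rule yields
\[
\widehat{f_n}(\xi)=\frac{n^\delta}{2}\left(e^{-i\alpha}\widehat\psi\big(n^\delta(\xi-n)\big)+e^{i\alpha}\widehat\psi\big(n^\delta(\xi+n)\big)\right),
\]
so that, by the definition of the $H^r$ norm,
\[
\|f_n\|_{H^r}^2=\frac{n^{2\delta}}{4}\int_\R(1+\xi^2)^r\left|e^{-i\alpha}\widehat\psi\big(n^\delta(\xi-n)\big)+e^{i\alpha}\widehat\psi\big(n^\delta(\xi+n)\big)\right|^2{\rm d}\xi.
\]
The heuristic is that $\widehat\psi(n^\delta(\xi\mp n))$ are two bumps of width $\sim n^{-\delta}$ localized at $\xi=\pm n$; on their supports $(1+\xi^2)^r\approx n^{2r}$, and since $\widehat\psi$ is Schwartz and the bumps are separated by $2n$, the interaction term is negligible. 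I would therefore expand the modulus squared into two ``diagonal'' terms and one cross term and estimate each of them separately.

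For the cross term one uses that $\widehat\psi$ is Schwartz, so $|\widehat\psi(y)|\le C_N(1+|y|)^{-N}$ for every $N\in\N$, together with the elementary bound $(1+n^\delta|\xi-n|)(1+n^\delta|\xi+n|)\ge 1+n^\delta\big(|\xi-n|+|\xi+n|\big)\ge 1+2n^{1+\delta}$. Splitting $[(1+a)(1+b)]^{-N}$, with $a=n^\delta|\xi-n|$ and $b=n^\delta|\xi+n|$, as $[(1+a)(1+b)]^{-N/2}\cdot[(1+a)(1+b)]^{-N/2}$ and bounding the first factor by $(1+2n^{1+\delta})^{-N/2}$ and the second by $(1+a)^{-N/2}$, the cross contribution is bounded by $C_N\,n^{2\delta}(1+2n^{1+\delta})^{-N/2}\int_\R(1+\xi^2)^r(1+n^\delta|\xi-n|)^{-N/2}{\rm d}\xi$; after the substitution $\eta=n^\delta(\xi-n)$ and the elementary estimate $1+(n+n^{-\delta}\eta)^2\le n^2(3+2\eta^2)$ valid for $n\ge1$, this last integral is $O(n^{2r-\delta})$ once $N$ is large, so the whole cross term is $O\!\big(n^{2r+\delta}(1+2n^{1+\delta})^{-N/2}\big)=o(n^{2r+\delta})$.

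For each diagonal term, say $\tfrac{n^{2\delta}}{4}\int_\R(1+\xi^2)^r\big|\widehat\psi(n^\delta(\xi-n))\big|^2{\rm d}\xi$, the substitution $\eta=n^\delta(\xi-n)$ rewrites $n^{-2r-\delta}$ times it as $\tfrac14\int_\R\big(\tfrac{1+(n+n^{-\delta}\eta)^2}{n^2}\big)^r|\widehat\psi(\eta)|^2{\rm d}\eta$. Since $\tfrac{1+(n+n^{-\delta}\eta)^2}{n^2}\to1$ pointwise in $\eta$ and is dominated by $3+2\eta^2$ for all $n\ge1$, with $(3+2\eta^2)^r|\widehat\psi(\eta)|^2\in L^1(\R)$, dominated convergence gives the limit $\tfrac14\int_\R|\widehat\psi(\eta)|^2{\rm d}\eta=\tfrac14\|\psi\|_{L^2}^2$ by Plancherel (the normalization of the Fourier transform implicit in the definition of $H^s$ makes it an $L^2$-isometry). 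The term localized at $\xi=-n$ contributes the same, by the symmetric substitution $\eta=n^\delta(\xi+n)$. Summing the three pieces gives $n^{-2r-\delta}\|f_n\|_{H^r}^2\to\tfrac12\|\psi\|_{L^2}^2$, which is exactly \eqref{uh Hr estimate}; replacing $\alpha$ by $\alpha+\tfrac{\pi}{2}$ handles the case where $\cos$ is replaced by $\sin$. The only slightly delicate points are the bookkeeping for the cross term and exhibiting a uniform $L^1$-majorant for the diagonal terms, but both follow immediately from the rapid decay of $\widehat\psi$, so I do not expect a genuine obstacle here — the lemma is essentially a direct Fourier-side computation.
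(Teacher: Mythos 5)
The paper does not prove this lemma at all — it is imported verbatim from \cite{Himonas-Kenig-2009-DIE} — so there is no in-paper argument to compare against; your write-up is correct and is essentially the standard proof from that reference. The key points all check out: the exact Fourier transform $\widehat{f_n}(\xi)=\tfrac{n^\delta}{2}\bigl(e^{-i\alpha}\widehat\psi(n^\delta(\xi-n))+e^{i\alpha}\widehat\psi(n^\delta(\xi+n))\bigr)$, the diagonal terms each converging to $\tfrac14\|\psi\|_{L^2}^2$ by dominated convergence with the majorant $(3+2\eta^2)^r|\widehat\psi(\eta)|^2$, and the cross term being $o(n^{2r+\delta})$ because the two bumps are separated by $2n$ while $\widehat\psi$ decays rapidly.
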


	\section{Proof of Theorem \ref{Local pathwise solution}}\label{local existence section}

	For the sake of clarity, the proof is divided into several subsections.

	\subsection{Approximation scheme and uniform estimates}\label{section:appro and estimate}

	%
	%
	
	The first step is to construct a suitable approximation scheme. For any $R>1$, we let $\chi_R(x):[0,\infty)\rightarrow[0,1]$ be a $C^{\infty}$ function such that $\chi_R(x)=1$ for $x\in[0,R]$ and $\chi_R(x)=0$ for $x>2R$.
	Then we consider the following cut-off problem on $\R$,
	\begin{equation} \label{cut-off problem}
		\left\{\begin{aligned}
			&{\rm d}u+\chi_R(\|u\|_{H^{s-3/2}})(\HH u)u_x{\rm d}t=\chi_R(\|u\|_{H^{s-3/2}})h(t,u){\rm d}\W,\\
			&u(\omega,0,x)=u_0(\omega,x)\in H^{s},\ \ s>3.
		\end{aligned} \right.
	\end{equation}
	To apply the theory of SDEs in Hilbert space to \eqref{cut-off problem}, we will have to mollify the transport term $(\HH u)u_x$ since the product $(\HH u)u_x$ loses regularity. To this end, we consider the following approximation scheme:
	\begin{equation} \label{approximate problem}
		\left\{\begin{aligned}
			{\rm d}u+G_{1,\e}(u){\rm d}t&=G_{2}(t,u){\rm d}\W,\\
			G_{1,\e}(u)&=\chi_R(\|u\|_{H^{s-3/2}})J_\e [\left(\HH J_\e u \right)\partial_{x}J_\e u],\\
			G_{2}(t,u)&=\chi_R(\|u\|_{H^{s-3/2}})h(t,u),\\
			u(0,x)&=u_0(x)\in H^{s},
		\end{aligned} \right.
	\end{equation}
	where $J_{\e}$ is the Friedrichs mollifier defined in Section \ref{Preliminaries}. 
	
	After mollifying the non-local transport term $(\HH u)u_x$,  we see that $G_{1,\e}(\cdot)$ and $G_2(t,\cdot)$ are locally Lipschitz continuous in $H^s$. Moreover, the cut-off function $\chi_R(\|\cdot\|_{H^{s-3/2}})$ gives the linear growth condition (cf, Lemma \ref{Huux Hs inner product} and \eqref{assumption 1 for h}), i.e., there are constants $l_1=l_1(\e,R)>0$ and $l_2=l_2(R)>0$ such that for all $t\ge0$ and $s>3$,
	\begin{equation} \label{H1 H2 condition}
		\|G_{1,\e}(u)\|_{H^s}\leq l_1(1+\|u\|_{H^s}),\ \
		\|G_{2}(t,u)\|_{\LL_2(\U;H^s)}\leq l_2(1+\|u\|_{H^s}),\ \ t\in[0,T].
	\end{equation}

	Therefore,  for a fixed stochastic basis $\s=(\Omega, \mathcal{F},\p,\{\mathcal{F}_t\}_{t\geq0}, \W)$ and for $u_0\in L^2(\Omega;H^s)$ with $s>3$, the existence theory of SDEs in Hilbert space  (see for example \cite[Theorem 4.2.4 with Example 4.1.3]{Prevot-Rockner-2007-book} and \cite{Kallianpur-Xiong-1995-book}), \eqref{approximate problem} admits a unique solution $u_{\e}\in C([0,\infty ),H^s)$ $\p-a.s.$   
	
	We have the following uniform-in-$\e$ estimate:
	
	\begin{Proposition}\label{global solution to appro and estimates}
		Let $\s=(\Omega, \mathcal{F},\p,\{\mathcal{F}_t\}_{t\geq0}, \W)$ be a fixed stochastic basis. Let $s>3$, $R>1$ and $\e\in(0,1)$. Assume $h$ satisfies Assumption \ref{Assumption-1} and $u_0\in L^2(\Omega;H^s)$ is an $H^s$-valued $\mathcal{F}_0$-measurable random variable. Let $u_{\e}\in C([0,\infty);H^{s})$ solve \eqref{approximate problem} $\p-a.s.$, then  for any $T>0$, 
		there are $C=C(R,T,u_0)>0$ $(i=0,1,2,3,4)$ such that
		\begin{align}
			\sup_{\e>0}\E\sup_{t\in[0,T]}\|u_{\varepsilon}(t)\|^2_{H^s}
			\leq C.\label{E r e}
		\end{align}
		
		\begin{proof}
			Using the It\^{o} formula for $\|u_\e\|^2_{H^s}$, we have that for any $t>0$,	
			\begin{align}
				{\rm d}\|u_\e(t)\|^2_{H^s}
				= \, & 2\chi_R(\|u_\e\|_{H^{s-3/2}})\left( h(t,u_\e){\rm d}\W,u_\e \right)_{H^s}\notag\\
				&-2\chi_R(\|u_\e\|_{H^{s-3/2}})
				\left(D^sJ_{\varepsilon}\left[
				J_{\varepsilon}(\HH u_\e)\partial_xJ_{\varepsilon}u_\e\right],D^s u_\e \right)_{L^2}{\rm d}t\notag\\
				&+ \chi^2_R(\|u_\e\|_{H^{s-3/2}})\|h(t,u_\e)\|_{\LL_2(\U; H^s)}^2{\rm d}t.\notag
			\end{align}
		Then, by means of the BDG inequality, \eqref{assumption 1 for h},  $H^{s-3/2}\hookrightarrow W^{1,\infty}$ and \eqref{H Je Hs norm}, we find that for some constant $C=C(R)>0$,
			\begin{align*}
				&\E\sup_{t\in[0,T]}\|u_\e(t)\|^2_{H^s}-\E\|u_0\|^2_{H^s}\\
				\lesssim&
				\E\left(\int_0^{T}\chi^2_R(\|u_\e\|_{H^{s-3/2}})
				f^2(2\|u_\e\|_{H^{s-3/2}})(1+\|u_\e\|^2_{H^s})\|u_\e\|^2_{H^s}{\rm d}t\right)^{\frac12}\\
				&+2\E\int_0^{T}\chi_R(\|u_\e\|_{H^{s-3/2}})\left|\left(D^s J_{\varepsilon}\left[
				J_{\varepsilon}(\HH u_\e)\partial_xJ_{\varepsilon}u_\e\right],D^s u_\e \right)_{L^2}\right|{\rm d}t\\
				&+\E\int_0^{T}\chi^2_R(\|u_\e\|_{H^{s-3/2}})f^2(2\|u_\e\|_{H^{s-3/2}})(1+\|u_\e\|^2_{H^s}){\rm d}t\\
				\leq&\frac12\E\sup_{t\in[0,T]}\|u_\e\|_{H^s}^2+
				C(R)\E \int_0^{T}\left(1+\|u_\e\|^2_{H^s}\right){\rm d}t \\
				&+2\E\int_0^{T}\chi_R(\|u_\e\|_{H^{s-3/2}})\left|\left(D^sJ_{\varepsilon}\left[
				J_{\varepsilon}(\HH u_\e)\partial_xJ_{\varepsilon}u_\e\right],D^s u_\e \right)_{L^2}\right|{\rm d}t.
			\end{align*}
			Let $J_\e u_\e=v$.
			It follows from \eqref{mollifier property 4}, Lemma \ref{Kato-Ponce commutator estimate}, integration by parts and \eqref{H Je Hs norm} that
			\begin{align*}
				\left|\left(D^sJ_{\varepsilon}\left[
				J_{\varepsilon}(\HH u_\e)\partial_xJ_{\varepsilon}u_\e\right],D^s u_\e \right)_{L^2}\right|
				\leq&\left|\left(\left[D^s,
				\HH v \right]v_x,D^sv\right)_{L^2}\right|+
				\left|\left((\HH v ) D^s v_x, D^sv\right)_{L^2}\right|\\
				\leq&  C\left(\|\HH v\|_{H^s}\|v_x\|_{L^\infty}+\|\HH v_x\|_{L^\infty}\|v\|_{H^s}\right)\|v\|_{H^s}\\
				\leq&  C\|v\|_{H^{s-3/2}}\|v\|^2_{H^s},
			\end{align*}
			which implies
			\begin{align*}
				\E\int_0^{T}	\chi_R(\|u_\e\|_{H^{s-3/2}})\left|\left(D^sJ_{\varepsilon}\left[
				J_{\varepsilon}(\HH u_\e)\partial_xJ_{\varepsilon}u_\e\right],D^s u_\e \right)_{L^2}\right|{\rm d}t\leq C(R)\E\int_0^{T} \|u_\e\|_{H^s}^2{\rm d}t.
			\end{align*}
			Therefore we obtain
			\begin{align*}
				\E\sup_{t\in[0,T]}\|u_\e(t)\|^2_{H^s}
				\leq 2\E\|u_0\|^2_{H^s}+ C(R)\int_0^{T} \left(1+\E\sup_{t'\in[0,t]}\|u(t')\|_{H^s}^2\right){\rm d}t.
			\end{align*}
			Using Gr\"{o}nwall's inequality to the above estimate implies that for some $C=C(R,T,u_0)>0$,
			\begin{align*}
				\E\sup_{t\in[0,T]}\|u_\e(t)\|^2_{H^s}\leq C(R,T,u_0),
			\end{align*}
			which is \eqref{E r e}.
		\end{proof}

	\end{Proposition}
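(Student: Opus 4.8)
The plan is to run a direct energy estimate in $H^s$ on the mollified equation \eqref{approximate problem}, exploiting the cut-off $\chi_R$ to linearize every nonlinear growth and making sure that all constants produced are independent of $\e$. Since for fixed $\e\in(0,1)$ the solution $u_\e$ lies in $C([0,\infty);H^s)$ and the coefficients are locally Lipschitz with linear growth \eqref{H1 H2 condition}, the infinite-dimensional It\^o formula applies to the functional $u\mapsto\|u\|_{H^s}^2=\|D^su\|_{L^2}^2$. After applying it, I would take $\sup_{t\in[0,T]}$ and then $\E$, and bound the three resulting contributions separately: the stochastic integral by the Burkholder--Davis--Gundy inequality, the It\^o correction $\chi_R^2\|h(t,u_\e)\|_{\LL_2(\U;H^s)}^2$ directly, and the drift coming from the mollified transport term.

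For the two noise contributions I would use \eqref{assumption 1 for h}. On the support of $\chi_R(\|u_\e\|_{H^{s-3/2}})$ one has $\|u_\e\|_{H^{s-3/2}}\le 2R$, and since $s-3/2>3/2$ the embedding $H^{s-3/2}\hookrightarrow\Wlip$ gives $\|u_{\e,x}\|_{L^\infty}+\|\HH u_{\e,x}\|_{L^\infty}\le c_R$; because $f$ is non-decreasing and locally bounded this yields $\chi_R^2\|h(t,u_\e)\|_{\LL_2(\U;H^s)}^2\le C(R)(1+\|u_\e\|_{H^s}^2)$. For the transport drift I would first use $[D^s,J_\e]=0$ and the self-adjointness \eqref{mollifier property 4} of $J_\e$ to rewrite the inner product with $v\triangleq J_\e u_\e$ as $(D^s[(\HH v)v_x],D^sv)_{L^2}$, then split $D^s[(\HH v)v_x]$ into the commutator term $[D^s,\HH v]v_x$, controlled by Lemma \ref{Kato-Ponce commutator estimate}, and the top-order piece $(\HH v)D^sv_x$, which I would integrate by parts against $D^sv$ to produce $-\tfrac12(\HH v_x)(D^sv)^2$. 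Using $\|\HH v\|_{H^s}\le\|v\|_{H^s}\le\|u_\e\|_{H^s}$ together with $H^{s-3/2}\hookrightarrow\Wlip$, the full drift is bounded by $C\|u_\e\|_{H^{s-3/2}}\|u_\e\|_{H^s}^2\le C(R)\|u_\e\|_{H^s}^2$ on the support of $\chi_R$.

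Combining these, the BDG term is $\lesssim\E\big(\int_0^T\chi_R^2\|h\|_{\LL_2(\U;H^s)}^2\|u_\e\|_{H^s}^2\,{\rm d}t\big)^{1/2}\le\E\big(\sup_{t\le T}\|u_\e\|_{H^s}^2\cdot C(R)\int_0^T(1+\|u_\e\|_{H^s}^2)\,{\rm d}t\big)^{1/2}$, which by Young's inequality is absorbed as $\tfrac12\E\sup_{t\le T}\|u_\e\|_{H^s}^2+C(R)\E\int_0^T(1+\|u_\e\|_{H^s}^2)\,{\rm d}t$; the other two contributions are already of the form $C(R)\E\int_0^T(1+\|u_\e\|_{H^s}^2)\,{\rm d}t$. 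Moving the absorbed term to the left-hand side and bounding $\E\int_0^T\|u_\e(t)\|_{H^s}^2\,{\rm d}t\le\int_0^T\E\sup_{t'\le t}\|u_\e(t')\|_{H^s}^2\,{\rm d}t$, I would close with Gr\"onwall's inequality to get $\E\sup_{t\in[0,T]}\|u_\e(t)\|_{H^s}^2\le C(R,T,u_0)$.

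The point to watch throughout is uniformity in $\e$: the argument must only invoke the $\e$-independent mollifier facts, namely $\|J_\e f\|_{H^r}\le\|f\|_{H^r}$ (i.e.\ \eqref{H Je Hs norm}) and the vanishing commutators \eqref{H Je Ds}, and never \eqref{mollifier property 2}; then the Kato--Ponce and Sobolev embedding constants carry no $\e$, so $C(R,T,u_0)$ is genuinely $\e$-free. The only genuinely delicate step is the transport term, which at first sight seems to need $H^{s+1}$-control of $v$; the resolution is the classical commutator-plus-integration-by-parts cancellation of the top derivative, clean here because the leftover bad factor is merely $\HH v_x\in L^\infty$ rather than a derivative of $v$, and because the cut-off in $\|\cdot\|_{H^{s-3/2}}$ with $s-3/2>3/2$ is exactly what turns that $L^\infty$ norm into a constant.
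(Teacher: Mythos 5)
Your proposal is correct and follows essentially the same route as the paper: It\^o formula for $\|u_\e\|^2_{H^s}$, BDG for the martingale part, the cut-off plus $H^{s-3/2}\hookrightarrow\Wlip$ to reduce the noise terms to $C(R)(1+\|u_\e\|^2_{H^s})$, and the same treatment of the transport drift via $v=J_\e u_\e$, the Kato--Ponce commutator, and integration by parts on the top-order piece, closed by Gr\"onwall. Your explicit remarks on $\e$-uniformity (using only \eqref{H Je Hs norm} and \eqref{H Je Ds}, never \eqref{mollifier property 2}) match what the paper does implicitly.
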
 
	
	\subsection{Convergence of approximate solutions}\label{subsec:convergence}

	Now we are going to show that, there is a subsequence of $u_\e$ converging in $C([0,T],H^{s-3/2})$ almost surely. To this end, for the solutions $u_\e$ and $u_\eta$ to \eqref{approximate problem}, we consider the following problem for $v_{\e,\eta}=u_\e-u_\eta$,
	\begin{equation}\label{v ee equation}
		{\rm d}v_{\e,\eta}+\left[G_{1,\e}(u_\e)-G_{1,\eta}(u_{\eta})\right]{\rm d}t=\left[G_{2}(t,u_\e)-G_{2}(t,u_{\eta})\right]{\rm d}\W,\ \ v_{\e,\eta}(0)=0.
	\end{equation}
	We notice that
	\begin{align}
		&G_{1,\e}(u_\e)-G_{1,\eta}(u_{\eta})\notag\\
		=&\chi_R(\|u_\e\|_{H^{s-3/2}})
		\left[J_{\e}\left(J_{\e}(\HH u_\e)\partial_xJ_{\e}u_\e\right)
		\right]
		-\chi_R(\|u_\eta\|_{H^{s-3/2}})
		\left[J_{\eta}\left(J_\eta (\HH u_\eta)\partial_xJ_\eta u_\eta\right)
		\right]\notag\\
		=& \left[\chi_R(\|u_{\e}\|_{H^{s-3/2}})-\chi_R(\|u_{\eta}\|_{H^{s-3/2}})\right]
		J_\e[J_\e (\HH u_\e)\partial_xJ_\e u_\e]\notag\\
		&+\chi_R(\|u_{\eta}\|_{H^{s-3/2}})
		(J_\e-J_\eta)[J_\e (\HH u_\e)\partial_xJ_\e u_\e]\notag\\
		&+\chi_R(\|u_{\eta}\|_{H^{s-3/2}})
		J_\eta[(J_\e-J_\eta)(\HH u_\e)\partial_xJ_\e u_\e]+\chi_R(\|u_{\eta}\|_{H^{s-3/2}})
		J_\eta[J_\eta(\HH u_\e-\HH u_\eta)\partial_xJ_\e u_\e]\notag\\
		&+\chi_R(\|u_{\eta}\|_{H^{s-3/2}})
		J_\eta[J_\eta (\HH u_{\eta})\partial_x(J_\e-J_\eta) u_\e]
		+\chi_R(\|u_{\eta}\|_{H^{s-3/2}})
		J_\eta[J_\eta (\HH u_{\eta})\partial_xJ_\eta (u_\e-u_\eta)]\notag\\
		\triangleq&\sum_{i=1}^{6}q_i.\label{G1-G1}
	\end{align}
	and
	\begin{align}
		&G_{2}(t,u_\e)-G_{2}(t,u_{\eta})\notag\\
		=&\chi_R(\|u_\e\|_{H^{s-3/2}})h(t,u_\e)
		-\chi_R(\|u_\eta\|_{H^{s-3/2}})h(t,u_\eta)\notag\\
		=&\left[\chi_R(\|u_{\e}\|_{H^{s-3/2}})-\chi_R(\|u_{\eta}\|_{H^{s-3/2}})\right]
		h(t,u_\e)+\chi_R(\|u_{\eta}\|_{H^{s-3/2}})[h(t,u_\e)-h(t,u_\eta)]\notag\\
		\triangleq&\sum_{i=7}^{8}q_i.\label{G2-G2}
	\end{align}
Invoking It\^{o}'s formula in \eqref{v ee equation} and recalling \eqref{G1-G1}, \eqref{G2-G2} we find that for any $t>0$,
	\begin{align}
		\|v_{\e,\eta}(t)\|^2_{H^{s-3/2}}
		=&\mathcal{Q}_1
		-\int_0^t\mathcal{Q}_2{\rm d}t'
		+\int_0^t\mathcal{Q}_3{\rm d}t',\label{v-ee Hs-3}
	\end{align}
	where 
	\begin{equation}\label{Ni qi}
		\mathcal{Q}_1=2\int_0^t\left( \sum_{i=7}^{8}q_i {\rm d}\W,
		v_{\e,\eta}\right)_{H^{s-3/2}},\ \ 
		\mathcal{Q}_2=2\sum_{i=1}^6\left(q_i,v_{\e,\eta}\right)_{H^{s-3/2}},\ \ 
		\mathcal{Q}_3=\left\|\sum_{i=7}^{8} q_i\right\|_{\LL_2\left(U;H^{s-3/2}\right)}^2.
	\end{equation}
			
			\begin{Lemma}\label{N2 Lemma}
				Let $s>3$. For any $\e,\eta\in(0,1)$, there is a constant $C>0$ such that $\mathcal{Q}_2$ given by \eqref{Ni qi} satisfies
				\begin{align*}
					|\mathcal{Q}_2|
					\leq& C(1+\|u_\e\|^2_{H^s}+\|u_\eta\|^2_{H^s})\|v_{\e,\eta}\|^2_{H^{s-3/2}}+C(\|u_\e\|^4_{H^s}+\|u_\eta\|^4_{H^s})\max\{\e,\eta\}.
				\end{align*}
			\end{Lemma}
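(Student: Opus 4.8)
The plan is to bound each of the six pairings in $\mathcal{Q}_2=2\sum_{i=1}^{6}(q_i,v_{\e,\eta})_{H^{s-3/2}}$ separately. For the five terms with $i\neq 6$ I would use $|(q_i,v_{\e,\eta})_{H^{s-3/2}}|\le\|q_i\|_{H^{s-3/2}}\|v_{\e,\eta}\|_{H^{s-3/2}}$ and then estimate $\|q_i\|_{H^{s-3/2}}$. Since $s>3$ we have $s-3/2>3/2$, so $H^{s-3/2}$ is a Banach algebra and $H^{s-3/2}\hookrightarrow W^{1,\infty}$; the main tools are then the product and commutator estimates of Lemma~\ref{Kato-Ponce commutator estimate}, the boundedness of $\HH$ and $J_\e$ on $H^\sigma$ and $L^\infty$ (see \eqref{H Je Hs norm}), and the mollifier bounds \eqref{mollifier property 1}--\eqref{mollifier property 2}. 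I would repeatedly use that $\partial_x J_\e u_\e\in H^{s-1}\hookrightarrow H^{s-3/2}$ with $\|\partial_x J_\e u_\e\|_{H^{s-3/2}}+\|\partial_x J_\e u_\e\|_{L^\infty}\lesssim\|u_\e\|_{H^s}$, together with Young's inequality $ab\le\tfrac12 a^2+\tfrac12 b^2$, which turns products such as $\max\{\e,\eta\}^{1/2}\|u_\e\|_{H^s}^2\|v_{\e,\eta}\|_{H^{s-3/2}}$ into $\max\{\e,\eta\}\|u_\e\|_{H^s}^4+\|v_{\e,\eta}\|_{H^{s-3/2}}^2$.

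The six terms split into three groups. For $q_1$, the Lipschitz continuity of $\chi_R$ gives $|\chi_R(\|u_\e\|_{H^{s-3/2}})-\chi_R(\|u_\eta\|_{H^{s-3/2}})|\lesssim\|v_{\e,\eta}\|_{H^{s-3/2}}$, while the algebra property yields $\|J_\e[J_\e(\HH u_\e)\partial_x J_\e u_\e]\|_{H^{s-3/2}}\lesssim\|u_\e\|^2_{H^s}$, so that $(q_1,v_{\e,\eta})_{H^{s-3/2}}\lesssim\|u_\e\|^2_{H^s}\|v_{\e,\eta}\|^2_{H^{s-3/2}}$. The terms $q_2$, $q_3$, $q_5$ each carry a factor $J_\e-J_\eta$ (acting on the whole product, on $\HH u_\e$, and on $\partial_x u_\e$ respectively), so \eqref{mollifier property 1}--\eqref{mollifier property 2} extract a factor $\max\{\e,\eta\}^{1/2}$ (or a higher power) at the cost of at most half a derivative, which $u_\e\in H^s$ can afford; the remaining factors are bounded by $\|u_\e\|^2_{H^s}$ or $\|u_\eta\|_{H^s}\|u_\e\|_{H^s}$, and Young's inequality then gives $|(q_i,v_{\e,\eta})_{H^{s-3/2}}|\lesssim\max\{\e,\eta\}(\|u_\e\|^4_{H^s}+\|u_\eta\|^4_{H^s})+\|v_{\e,\eta}\|^2_{H^{s-3/2}}$. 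Finally, for $q_4$ the spatial derivative falls on $u_\e\in H^s$, so a plain product estimate gives $\|q_4\|_{H^{s-3/2}}\lesssim\|\HH v_{\e,\eta}\|_{H^{s-3/2}}\|\partial_x J_\e u_\e\|_{L^\infty}+\|\HH v_{\e,\eta}\|_{L^\infty}\|\partial_x J_\e u_\e\|_{H^{s-3/2}}\lesssim\|u_\e\|_{H^s}\|v_{\e,\eta}\|_{H^{s-3/2}}$, whence $(q_4,v_{\e,\eta})_{H^{s-3/2}}\lesssim\|u_\e\|_{H^s}\|v_{\e,\eta}\|^2_{H^{s-3/2}}$.

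The main obstacle is the diagonal term $q_6=\chi_R(\|u_\eta\|_{H^{s-3/2}})J_\eta[J_\eta(\HH u_\eta)\partial_x J_\eta v_{\e,\eta}]$: here the derivative falls on $v_{\e,\eta}$, and bounding $\|q_6\|_{H^{s-3/2}}$ directly would force $\partial_x J_\eta v_{\e,\eta}\in H^{s-3/2}$, costing a factor $\eta^{-1}$, so Cauchy--Schwarz is useless. Instead I would proceed exactly as in the proof of Lemma~\ref{Huux Hs inner product}: by self-adjointness of $J_\eta$ and its commutation with $D^{s-3/2}$ one moves one copy of $J_\eta$ onto $v_{\e,\eta}$, and setting $g=J_\eta\HH u_\eta$, $\phi=J_\eta v_{\e,\eta}$ (so $\|\partial_x g\|_{L^\infty}+\|g\|_{H^{s-3/2}}\lesssim\|u_\eta\|_{H^s}$ and $\|\phi\|_{H^{s-3/2}}\lesssim\|v_{\e,\eta}\|_{H^{s-3/2}}$), the pairing equals, up to the bounded factor $\chi_R$,
\begin{align*}
\big(D^{s-3/2}(g\,\phi_x),D^{s-3/2}\phi\big)_{L^2}
=\,&\big([D^{s-3/2},g]\phi_x,D^{s-3/2}\phi\big)_{L^2}\\
&+\big(g\,\partial_x D^{s-3/2}\phi,D^{s-3/2}\phi\big)_{L^2}.
\end{align*}
The commutator term is controlled via Lemma~\ref{Kato-Ponce commutator estimate} together with $H^{s-3/2}\hookrightarrow W^{1,\infty}$, and the last term via integration by parts, $\int g\,\partial_x|D^{s-3/2}\phi|^2=-\int\partial_x g\,|D^{s-3/2}\phi|^2$; together these give $|(q_6,v_{\e,\eta})_{H^{s-3/2}}|\lesssim\|u_\eta\|_{H^s}\|v_{\e,\eta}\|^2_{H^{s-3/2}}$. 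Summing the six contributions and bounding every $\|u\|_{H^s}$-linear coefficient by $1+\|u\|^2_{H^s}$ then yields the claim. I expect the bookkeeping of the powers of $\e$ and $\eta$ in $q_2,q_3,q_5$, and the commutator/integration-by-parts argument for $q_6$, to be the only genuinely delicate points.
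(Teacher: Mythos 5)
Your proposal is correct and follows essentially the same route as the paper: Cauchy--Schwarz plus the Lipschitz property of $\chi_R$ for $q_1$, the mollifier estimates to extract $\max\{\e,\eta\}^{1/2}$ from $q_2,q_3,q_5$ followed by Young's inequality, a direct product bound for $q_4$, and the commutator/integration-by-parts argument for the problematic diagonal term $q_6$. No gaps.
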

			\begin{proof}
				Using the mean value theorem for $\chi_R(\cdot)$ and \eqref{H Je Hs norm}, we have
				\begin{align*}
					\left\| q_1\right\|_{H^{s-3/2}}
					\lesssim\|v_{\e,\eta}\|_{H^{s-3/2}}\|u_\e\|^2_{H^{s}},
				\end{align*}
				Using \eqref{mollifier property 1} and \eqref{H Je Hs norm}, we see that
				\begin{align*}
					\left\| q_i\right\|_{H^{s-3/2}}\lesssim&\max\{\e^{1/2},\eta^{1/2}\}\|u_\e\|^2_{H^{s}},\ i=2,3,\\
					\left\| q_4\right\|_{H^{s-3/2}}		\lesssim&\|v_{\e,\eta}\|_{H^{s-3/2}}\|u_\e\|_{H^{s}},\\
					\left\| q_5\right\|_{H^{s-3/2}}\lesssim&\max\{\e^{1/2},\eta^{1/2}\}\|u_\e\|_{H^{s}}\|u_\eta\|_{H^{s}}.
				\end{align*}
				For $q_6$, using \eqref{mollifier property 4}, \eqref{H Je Ds} and then integrating by part, we have
				\begin{align*}
					\left(q_6,v_{\e,\eta}\right)_{H^{s-3/2}}
					=&\chi_R(\|u_{\eta}\|_{H^{s-3/2}})\int_\R D^{s-3/2}[J_\eta (\HH u_{\eta})\partial_xJ_\eta v_{\e,\eta}]\cdot D^{s-3/2}J_\eta v_{\e,\eta}{\rm d}x\\
					=&\chi_R(\|u_{\eta}\|_{H^{s-3/2}})\int_\R [D^{s-3/2},J_\eta (\HH u_{\eta})]\partial_xJ_\eta v_{\e,\eta}\cdot D^{s-3/2}J_\eta v_{\e,\eta}{\rm d}x\\
					&+\chi_R(\|u_{\eta}\|_{H^{s-3/2}})\int_\R J_\eta (\HH u_{\eta})\partial_xD^{s-3/2}J_\eta v_{\e,\eta}\cdot D^{s-3/2}J_\eta v_{\e,\eta}{\rm d}x\\
					=&\chi_R(\|u_{\eta}\|_{H^{s-3/2}})\int_\R [D^{s-3/2},J_\eta (\HH u_{\eta})]\partial_xJ_\eta v_{\e,\eta}\cdot D^{s-3/2}J_\eta v_{\e,\eta}{\rm d}x\\
					&-\frac12\chi_R(\|u_{\eta}\|_{H^{s-3/2}})\int_\R J_\eta (\HH u_{\eta})_x\cdot (D^{s-3/2}J_\eta v_{\e,\eta})^2{\rm d}x.
				\end{align*}
				Using $\chi_R(\cdot)\leq1$, Lemma \ref{Kato-Ponce commutator estimate}, \eqref{H Je Hs norm} and the embedding $H^{s-3/2}\hookrightarrow W^{1,\infty}$, we have
				\begin{align*}
					\left(q_6,v_{\e,\eta}\right)_{H^{s-3/2}}
					\lesssim&\|u_\eta\|_{H^{s-3/2}}\|\partial_xJ_\eta v_{\e,\eta}\|_{L^{\infty}}\|v_{\e,\eta}\|_{H^{s-3/2}}
					+\|\partial_xJ_\eta (\HH u_\eta)\|_{L^{\infty}}\|v_{\e,\eta}\|^2_{H^{s-3/2}}
					\\
					\lesssim&\|u_\eta\|_{H^{s}}\|v_{\e,\eta}\|^{2}_{H^{s-3/2}}.
				\end{align*}
				Therefore we can put these all together to see that there is a constant $C>0$ such that
				\begin{align*}
					|\mathcal{Q}_2|
					\leq& C(1+\|u_\e\|^2_{H^s}+\|u_\eta\|^2_{H^s})\|v_{\e,\eta}\|^2_{H^{s-3/2}}+C(\|u_\e\|^4_{H^s}+\|u_\eta\|^4_{H^s})\max\{\e,\eta\},
				\end{align*}
				which is the desired estimate.
			\end{proof}

			\begin{Lemma}\label{Convergene of ue probability}
				Let $\s=(\Omega, \mathcal{F},\p,\{\mathcal{F}_t\}_{t\geq0}, \W)$ be a fixed stochastic basis. Let $s>3$, $R>1$ and $\e\in(0,1)$.  Let $u_{\e}\in C([0,\infty);H^{s})$ solve \eqref{approximate problem} $\p-a.s.$	For any $T>0$ and $K>1$, we define
				\begin{equation} \label{tau-e,T}
					\tau^{T}_{\e,K}=\inf\left\{t\geq0:\|u_\e(t)\|_{H^{s}}\geq K\right\}\wedge T,
				\end{equation}
				and 
				\begin{equation} \label{tau-e,eta,T}
					\tau^T_{\e,\eta,K}
					=\tau^{T}_{\e,K}\wedge\tau^{T}_{\eta,K}.
				\end{equation}
				Then we have
				\begin{equation} \label{Cauchy H s-3/2}
					\lim_{\e\rightarrow0}\sup_{\eta\leq\e}
					\E\sup_{t\in[0,\tau^T_{\e,\eta,K}]}\|u_\e-u_\eta\|_{H^{s-3/2}}=0,\ \ K>1.
				\end{equation}
			\end{Lemma}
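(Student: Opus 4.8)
The plan is to run a Gr\"onwall argument on
\[
X_{\e,\eta}(t)\triangleq\E\sup_{t'\in[0,t\wedge\tau^T_{\e,\eta,K}]}\|v_{\e,\eta}(t')\|^2_{H^{s-3/2}},
\]
starting from the It\^o identity \eqref{v-ee Hs-3}, and then to pass to the first power by Jensen's inequality. Write $\tau\triangleq\tau^T_{\e,\eta,K}$ for brevity. The key structural fact is that on the stochastic interval $[0,\tau]$ one has, by \eqref{tau-e,T}--\eqref{tau-e,eta,T} and the continuity of $t\mapsto u_\e(t),u_\eta(t)$, the bounds $\|u_\e(t)\|_{H^s}\le K$ and $\|u_\eta(t)\|_{H^s}\le K$; combined with the embedding $H^{s-3/2}\hookrightarrow W^{1,\infty}$ (valid since $s>3$, so that $s-3/2>3/2$) this turns every occurrence of $\|u_\e\|_{H^s}$, $\|u_\eta\|_{H^s}$, $f(\|u_{\e,x}\|_{L^\infty}+\|\HH u_{\e,x}\|_{L^\infty})$ from Assumption \ref{Assumption-1}, the modulus in \eqref{assumption 2 for h}, and the quartic terms appearing in Lemma \ref{N2 Lemma}, into deterministic constants depending only on $R$, $K$ and $T$.

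Next I would estimate, on $[0,\tau]$, the three terms of \eqref{v-ee Hs-3}. For $\mathcal{Q}_2$ I simply invoke Lemma \ref{N2 Lemma} together with $\|u_\e\|_{H^s},\|u_\eta\|_{H^s}\le K$, which gives $|\mathcal{Q}_2|\le C(K)\|v_{\e,\eta}\|^2_{H^{s-3/2}}+C(K)\max\{\e,\eta\}$. For $\mathcal{Q}_3=\|q_7+q_8\|^2_{\LL_2(\U;H^{s-3/2})}$ I would bound $q_7$ using the mean value theorem for $\chi_R$ (whose derivative is bounded and supported in $[R,2R]$) together with \eqref{assumption 1 for h}, and $q_8$ using \eqref{assumption 2 for h} with Sobolev index $s-3/2$ and $N=K$; both pieces are bounded by $C(R,K)\|v_{\e,\eta}\|_{H^{s-3/2}}$, so $\mathcal{Q}_3\le C(R,K)\|v_{\e,\eta}\|^2_{H^{s-3/2}}$. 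For the martingale term, the Burkholder--Davis--Gundy inequality and the Cauchy--Schwarz inequality in $H^{s-3/2}$ give
\[
\E\sup_{t'\in[0,t\wedge\tau]}|\mathcal{Q}_1|\le C\,\E\left(\sup_{t'\in[0,t\wedge\tau]}\|v_{\e,\eta}(t')\|^2_{H^{s-3/2}}\int_0^{t\wedge\tau}\mathcal{Q}_3\,{\rm d}t'\right)^{1/2},
\]
so that Young's inequality absorbs $\tfrac14\E\sup_{[0,t\wedge\tau]}\|v_{\e,\eta}\|^2_{H^{s-3/2}}$ into the left side and leaves $C(R,K)\E\int_0^{t\wedge\tau}\|v_{\e,\eta}\|^2_{H^{s-3/2}}\,{\rm d}t'$.

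Inserting these estimates into \eqref{v-ee Hs-3}, taking $\sup_{[0,t\wedge\tau]}$ and then $\E$, and using that $X_{\e,\eta}(t)<\infty$ by Proposition \ref{global solution to appro and estimates} (which legitimizes the absorption), one arrives at $X_{\e,\eta}(t)\le C(R,K,T)\max\{\e,\eta\}+C(R,K)\int_0^t X_{\e,\eta}(t')\,{\rm d}t'$ for $t\in[0,T]$; Gr\"onwall's inequality then yields $X_{\e,\eta}(T)\le C(R,K,T)\max\{\e,\eta\}$, hence $\sup_{\eta\le\e}X_{\e,\eta}(T)\le C(R,K,T)\,\e\to0$ as $\e\to0$, and Jensen's inequality $\E\sup_{[0,\tau]}\|u_\e-u_\eta\|_{H^{s-3/2}}\le(X_{\e,\eta}(T))^{1/2}$ gives \eqref{Cauchy H s-3/2}. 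The real difficulty has already been dealt with in Lemma \ref{N2 Lemma}: extracting a vanishing factor $\max\{\e,\eta\}$ from the mollifier-difference pieces $q_2,q_3,q_5$ while keeping the remaining drift pieces proportional to $\|v_{\e,\eta}\|^2_{H^{s-3/2}}$. Within the present lemma the only delicate point is to ensure that all constants are deterministic and finite, which is exactly why one works up to the stopping time $\tau$ (which freezes $\|u_\e\|_{H^s},\|u_\eta\|_{H^s}\le K$, so that the superlinear quantities $f(\cdot)$, $\chi_R'(\cdot)$, the modulus in \eqref{assumption 2 for h} and the quartic terms of Lemma \ref{N2 Lemma} become constants) and why the a priori bound of Proposition \ref{global solution to appro and estimates} is invoked before absorbing the Young term.
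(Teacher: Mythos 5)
Your proposal is correct and follows essentially the same route as the paper: start from the It\^o identity \eqref{v-ee Hs-3}, bound $\mathcal{Q}_2$ via Lemma \ref{N2 Lemma}, bound $q_7,q_8$ (hence $\mathcal{Q}_3$) via the mean value theorem for $\chi_R$ together with \eqref{assumption 1 for h}--\eqref{assumption 2 for h} on the stochastic interval where $\|u_\e\|_{H^s},\|u_\eta\|_{H^s}\le K$, absorb the BDG term by Young's inequality, apply Gr\"onwall to get $\E\sup_{[0,\tau^T_{\e,\eta,K}]}\|v_{\e,\eta}\|^2_{H^{s-3/2}}\le C(K,T)\max\{\e,\eta\}$, and pass to the first power by Cauchy--Schwarz. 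No gaps.
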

			\begin{proof}
				Recalling \eqref{v-ee Hs-3} and \eqref{Ni qi}, we have
				\begin{align}
					\|v_{\e,\eta}(t)\|^2_{H^{s-3/2}}
					\leq\ &|\mathcal{Q}_1|+
					\int_0^t|\mathcal{Q}_2|{\rm d}t'
					+\int_0^t|\mathcal{Q}_3|{\rm d}t'.\label{v ee Hs-3 BDG}
				\end{align}
				The mean value theorem for $\chi_R(\cdot)$ and Assumption \ref{Assumption-1} yield that
				\begin{align*}
					\|q_7\|_{\LL_2\left(U;H^{s-3/2}\right)}
					\leq C\|v_{\e,\eta}\|_{H^{s-3/2}}f(\|u_\e\|_{H^{s}}) \left(1+\|u_\e\|_{H^s}\right).
				\end{align*}
				By \eqref{tau-e,eta,T} and Assumption \ref{Assumption-1}, we see that 
				\begin{align*}
					\|q_{8}\|_{\LL_2\left(U;H^{s-3/2}\right)}
					\leq C\|v_{\e,\eta}\|_{H^{s-3/2}}q(K),\ \ t\in[0,\tau^T_{\e,\eta,K}]\ \ \p-a.s.,
				\end{align*}
				where $q(\cdot)$ is given in Assumption \ref{Assumption-1}. Therefore we find a constant $C=C(K)>0$ such that
				\begin{align}
					\E \int_0^{\tau^T_{\e,\eta,K}}|\mathcal{Q}_{3}|{\rm d}t
					\leq&
					C(K)\E \int_0^{\tau^T_{\e,\eta,K}}\|v_{\e,\eta}\|^{2}_{H^{s-3/2}}{\rm d}t\leq C(K)\int_0^T\E\sup_{t'\in[0,\tau^t_{\e,\eta,K}]}\|v_{\e,\eta}(t')\|^{2}_{H^{s-3/2}}{\rm d}t.
				\end{align}
				Then we employ the BDG inequality to \eqref{v-ee Hs-3} to find
				\begin{align*}
					&\E\sup_{t\in[0,\tau^T_{\e,\eta,K}]}\|v_{\e,\eta}(t)\|^{2}_{H^{s-3/2}}\\
					\leq&
					C(K)\E\left(\int_0^{\tau^T_{\e,\eta,K}}\|v_{\e,\eta}\|^4_{H^{s-3/2}}
					{\rm d}t\right)^{\frac12}
					+\sum_{i=2}^3\E \int_0^{\tau^T_{\e,\eta,K}}|\mathcal{Q}_{i}|{\rm d}t\\
					\leq&
					\frac12\E\sup_{t\in[0,\tau^T_{\e,\eta,K}]}\|v_{\e,\eta}\|^{2}_{H^{s-3/2}}
					+C(K)\E\int_0^{\tau^T_{\e,\eta,K}}\|v_{\e,\eta}\|^{2}_{H^{s-3/2}}
					{\rm d}t+\sum_{i=2}^3\E \int_0^{\tau^T_{\e,\eta,K}}|\mathcal{Q}_{i}|{\rm d}t\notag\\
					\leq&
					\frac12\E\sup_{t\in[0,\tau^T_{\e,\eta,K}]}\|v_{\e,\eta}\|^{2}_{H^{s-3/2}}
					+C(K)\int_0^{T}\E\sup_{t'\in[0,\tau^t_{\e,\eta,K}]}\|v_{\e,\eta}(t')\|^{2}_{H^{s-3/2}}
					{\rm d}t+\E \int_0^{\tau^T_{\e,\eta,K}}|\mathcal{Q}_{2}|{\rm d}t,
				\end{align*}
				On account of Lemma \ref{N2 Lemma}, we arrive at
				\begin{align}
					\E \int_0^{\tau^T_{\e,\eta,K}}|\mathcal{Q}_2|{\rm d}t
					\leq&
					C(K)\E \int_0^{\tau^T_{\e,\eta,K}}\|v_{\e,\eta}\|^{2}_{H^{s-3/2}}{\rm d}t
					+C(K)T\max\{\e,\eta\}\notag\\
					\leq&
					C(K) \int_0^{T}\E\sup_{t'\in[0,\tau^t_{\e,\eta,K}]}\|v_{\e,\eta}(t')\|^{2}_{H^{s-3/2}}{\rm d}t
					+C(K)T\max\{\e,\eta\}.
				\end{align}
				Now we can put these all together to obtain
				\begin{align}
					\E\sup_{t\in[0,\tau^T_{\e,\eta,K}]}\|v_{\e,\eta}(t)\|^2_{H^{s-3/2}}
					\leq
					C(K) \int_0^{T}\E\sup_{t'\in[0,\tau^t_{\e,\eta,K}]}\|v_{\e,\eta}(t')\|^2_{H^{s-3/2}}{\rm d}t
					+C(K)T\max\{\e,\eta\},
				\end{align}
				which means that
				\begin{align}\label{Cauchy H s-3/2 ee}
					\E\sup_{t\in[0,\tau^T_{\e,\eta,K}]}\|v_{\e,\eta}(t)\|^2_{H^{s-3/2}}
					\leq
					C(K,T)\max\{\e,\eta\},
				\end{align}
				and hence \eqref{Cauchy H s-3/2} holds true. 
			\end{proof}

			\begin{Lemma}\label{Convergence of ue}
				For any fixed $s>3$ and $T>0$, there is an $\{\mathcal{F}_t\}_{t\geq0}$ progressive measurable $H^{s}$-valued process
				\begin{equation}\label{u L2 bound}
					u\in L^2\left(\Omega; L^\infty\left(0,T;H^{s}\right) \right)
				\end{equation}
				and a countable subsequence of $\{u_\e\}$ $($still denoted as $\{u_\e\})$ such that 
				\begin{equation}\label{convergence ye a.s.}
					u_\e\xrightarrow[]{\e\rightarrow 0}u \  {\rm in}\  C([0,T];H^{s-3/2})\ \ \p-a.s.
				\end{equation}
			\end{Lemma}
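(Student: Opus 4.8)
The plan is to combine the quantitative Cauchy estimate of Lemma~\ref{Convergene of ue probability} with the uniform moment bound of Proposition~\ref{global solution to appro and estimates}. The first step is to upgrade the stopped-interval estimate to a Cauchy-in-probability statement on all of $[0,T]$. Equip $C([0,T];H^{s-3/2})$ with the distance $d(f,g)=\sup_{t\in[0,T]}\|f(t)-g(t)\|_{H^{s-3/2}}$, which makes it a complete metric space. Fix $\lambda,\delta>0$. By Proposition~\ref{global solution to appro and estimates} and Chebyshev's inequality there is $K=K(\delta)>1$, \emph{independent of} $\e$, with $\p\{\tau^T_{\e,K}<T\}=\p\{\sup_{t\in[0,T]}\|u_\e(t)\|_{H^s}\ge K\}\le \delta/4$ for all $\e$, hence $\p\{\tau^T_{\e,\eta,K}<T\}\le\delta/2$; on the complementary event $\tau^T_{\e,\eta,K}=T$, so
\[
\p\{d(u_\e,u_\eta)>\lambda\}\le \p\{\tau^T_{\e,\eta,K}<T\}+\frac1\lambda\,\E\sup_{t\in[0,\tau^T_{\e,\eta,K}]}\|u_\e-u_\eta\|_{H^{s-3/2}}.
\]
By Lemma~\ref{Convergene of ue probability} the last term is $<\delta/2$ once $\e$ (and $\eta\le\e$) is small, and by symmetry the same bound holds for $\e,\eta$ both small; thus $\{u_\e\}$ is Cauchy in probability in $C([0,T];H^{s-3/2})$.

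Since a sequence that is Cauchy in probability with values in a complete separable metric space admits a subsequence converging almost surely (via a Borel--Cantelli argument), I would extract a countable subsequence, still denoted $\{u_\e\}$, and a process $u$ with $u_\e\to u$ in $C([0,T];H^{s-3/2})$ $\p$-a.s.; redefining $u\equiv0$ on the exceptional null set, $u$ is an a.s.\ limit of the $\{\mathcal F_t\}$-progressively measurable processes $u_\e$, hence itself $\{\mathcal F_t\}$-progressively measurable as an $H^{s-3/2}$-valued process (and, since it will take values in the Borel subset $H^s$ of $H^{s-3/2}$, also as an $H^s$-valued process). To see $u\in L^2(\Omega;L^\infty(0,T;H^s))$, fix $\omega$ off a null set: for each $t\in[0,T]$ one has $u_\e(\omega,t)\to u(\omega,t)$ in $H^{s-3/2}$, and if $\liminf_{\e\to0}\sup_{t\in[0,T]}\|u_\e(\omega,t)\|_{H^s}$ is finite, then along a subsequence realizing this liminf the family $u_\e(\omega,t)$ is $H^s$-bounded for each fixed $t$, a further subsequence converges weakly in $H^s$ to the strong $H^{s-3/2}$-limit $u(\omega,t)$, and weak lower semicontinuity of $\|\cdot\|_{H^s}$ gives $\sup_{t\in[0,T]}\|u(\omega,t)\|_{H^s}^2\le\liminf_{\e\to0}\sup_{t\in[0,T]}\|u_\e(\omega,t)\|_{H^s}^2$. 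Taking expectations, Fatou's lemma together with Proposition~\ref{global solution to appro and estimates} yields $\E\sup_{t\in[0,T]}\|u(t)\|_{H^s}^2\le C$, which is \eqref{u L2 bound}, and completes the proof.

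The main obstacle is the first step: Lemma~\ref{Convergene of ue probability} only provides convergence on the \emph{random} interval $[0,\tau^T_{\e,\eta,K}]$, and passing to the deterministic interval $[0,T]$ relies essentially on the moment bound of Proposition~\ref{global solution to appro and estimates} being \emph{uniform in $\e$}, so that a single $K$ makes all the exit probabilities $\p\{\tau^T_{\e,K}<T\}$ simultaneously small. A secondary subtlety is that Proposition~\ref{global solution to appro and estimates} controls $\sup_{\e}\E(\cdot)$ rather than $\E\sup_{\e}(\cdot)$, which is why Fatou's lemma (rather than, say, dominated convergence) must be invoked to obtain the regularity of the limit $u$.
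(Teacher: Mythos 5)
Your proposal is correct and follows essentially the same route as the paper: combine the uniform-in-$\e$ moment bound with Chebyshev to control the exit probabilities, use the stopped Cauchy estimate of Lemma \ref{Convergene of ue probability} plus Markov's inequality to conclude that $\{u_\e\}$ is Cauchy in probability in $C([0,T];H^{s-3/2})$, extract an a.s.\ convergent subsequence, and then obtain \eqref{u L2 bound} via Fatou's lemma and lower semicontinuity of the $H^s$-norm along the $H^{s-3/2}$-limit. The only (immaterial) difference is that the paper implements the lower semicontinuity step with truncation maps $\pi_m:H^{s-3/2}\to H^s$ and a double $\liminf$, whereas you use weak compactness in $H^s$ together with weak lower semicontinuity of the norm.
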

			
			\begin{proof}
				We notice that for each $\e\in(0,1)$, \eqref{approximate problem} has solution $u_\e$ almost surely. Now we first take $\e$ to be discrete such that for all $\e$, $u_\e$ can be defined on the same set $\widetilde{\Omega}$ with $\p\{\widetilde{\Omega}\}=1$ (Actually, one can pick  discrete $\e=\frac{1}{n}$ from the beginning in \eqref{approximate problem}).  Recall \eqref{tau-e,T} and \eqref{tau-e,eta,T}. For any $\epsilon>0$, by using Proposition \ref{global solution to appro and estimates} and Chebyshev's inequality, we see that
				\begin{align*}
					&\p\left\{\sup_{t\in[0,T]}\|u_\e-u_\eta\|_{H^{s-\frac32}}>\epsilon\right\}\\
					=\, &\p\left\{
					\left(\left\{\tau^T_{\e,\eta,K}<T\right\}\cup \left\{\tau^T_{\e,\eta,K}=T\right\}\right)\cap
					\left\{\sup_{t\in[0,T]}\|u_\e-u_\eta\|_{H^{s-\frac32}}>\epsilon\right\}\right\}\\
					\leq\, & \p\left\{\tau^T_{\e,K}<T\right\}+\p\left\{\tau^T_{\eta,K}<T\right\}
					+\p\left\{
					\sup_{t\in[0,\tau^T_{\e,\eta,K}]}\|u_\e-u_\eta\|_{H^{s-\frac32}}>\epsilon\right\}\\
					\leq\, &\frac{2C(R,T,u_0)}{K^2}
					+\p\left\{
					\sup_{t\in[0,\tau^T_{\e,\eta,K}]}\|u_\e-u_\eta\|_{H^{s-\frac32}}>\epsilon\right\}.
				\end{align*}
				Now \eqref{Cauchy H s-3/2} clearly forces
				$$\lim_{\e\rightarrow0}\sup_{\eta\leq\e}\p\left\{\sup_{t\in[0,T]}\|u_\e-u_\eta\|_{H^{s-3/2}}>\epsilon\right\}
				\leq
				\frac{2C(R,T,u_0)}{K^2},\ \ K>1.$$
				Letting $K\rightarrow\infty$, we see that $u_\e$ converges in probability in $C([0,T];H^{s-3/2})$. 
				Therefore, up to a further subsequence, \eqref{convergence ye a.s.} holds true. 
				
				Now we prove \eqref{u L2 bound}. Indeed, since $H^{s}\hookrightarrow H^{s-3/2}$ is continuous, there exist  continuous maps $\pi_m: H^{s-3/2}\to H^{s},\ m\ge 1$ such that
				$$ \|\pi_m u\|_{H^s} \le \|u\|_{H^s}\ \  
				\lim_{m\rightarrow\infty} \|\pi_m u\|_{H^s} = \|u\|_{H^s},\ \ u\in H^{s-3/2},$$  where $\|u\|_{H^s}\triangleq\infty$ if $u\notin H^s.$ For example, one may take $\pi_m$ as the  standard mollifier.
				Then it follows from Proposition \ref{global solution to appro and estimates} and  Fatou's lemma that
				\begin{align*}
					\E\sup_{t\in[0,T]}\|u(t)\|^2_{H^s}
					\leq&
					\liminf_{m\rightarrow \infty}
					\E\sup_{t\in[0,T]}\|\pi_mu(t)\|^2_{H^s}\notag\\
					\leq&
					\liminf_{m\rightarrow \infty}
					\liminf_{\e\rightarrow 0}
					\E\sup_{t\in[0,T]} \|\pi_m u_\e(t)\|^2_{H^s}\notag\\
					\leq&
					\liminf_{m\rightarrow \infty}
					\liminf_{\e\rightarrow 0}
					\E\sup_{t\in[0,T]} \|u_\e(t)\|^2_{H^s}
					<C(R,u_0,T).
				\end{align*} 	
				Hence \eqref{u L2 bound} holds true.
			\end{proof}
			
			\subsection{Global pathwise solution to the cut-off problem}\label{subsec:global:path}
			\begin{Proposition}\label{global solution to cut-off problem}
				Let $\s=(\Omega, \mathcal{F},\p,\{\mathcal{F}_t\}_{t\geq0}, \W)$ be a stochastic basis fixed in advance. Let $s>3$,  $R>1$ and $\e\in(0,1)$. Assume Assumption  \ref{Assumption-1} is satisfied. Let $u_0\in L^2(\Omega;H^{s})$ be an $H^{s}$-valued $\mathcal{F}_0$ measurable random variable. Then for  any $T>0$, \eqref{cut-off problem} has a solution $u\in  L^2\left(\Omega; C\left([0,T];H^{s}\right)\right)$.
				Moreover, there is a constant $C(R,T,u_0)>0$ such that
				\begin{equation*}
					\E\sup_{t\in[0,T]}\|u\|^2_{H^{s}}\leq C(R,T,u_0).
				\end{equation*}
			\end{Proposition}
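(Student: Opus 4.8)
The plan is to pass to the limit $\e\to0$ along the subsequence produced in Lemma \ref{Convergence of ue}, to identify the limit $u$ recorded in \eqref{u L2 bound} as a solution of the cut-off problem \eqref{cut-off problem}, and then to upgrade its time-regularity from $C([0,T];H^{s-3/2})$ to $C([0,T];H^s)$; the moment bound is already the one contained in \eqref{u L2 bound}.

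First I would pass to the limit in the drift. On the support of $\chi_R(\|\cdot\|_{H^{s-3/2}})$ one has $\|u_\e\|_{H^{s-3/2}}\le 2R$, so---using $H^{s-3/2}\hookrightarrow W^{1,\infty}$ (recall $s>3$), the product and commutator estimates of Lemma \ref{Kato-Ponce commutator estimate}, the mollifier bounds \eqref{mollifier property 1}--\eqref{mollifier property 2} and \eqref{H Je Hs norm}---the integrand $G_{1,\e}(u_\e)$ is bounded in $H^{s-5/2}$ by a deterministic constant $C(R)$ and converges, uniformly in $t\in[0,T]$ and $\p$-a.s., to $\chi_R(\|u\|_{H^{s-3/2}})(\HH u)u_x$ in $H^{s-5/2}$; dominated convergence then gives convergence of $\int_0^t G_{1,\e}(u_\e)\,{\rm d}t'$. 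For the noise term I would use that $h$ is also continuous into $\LL_2(\U;H^{s-3/2})$ and locally Lipschitz there (Assumption \ref{Assumption-1} at regularity $s-3/2>3/2$); consequently, on the support of $\chi_R(\|\cdot\|_{H^{s-3/2}})$, the process $G_2(t,u_\e)$ is bounded in $\LL_2(\U;H^{s-3/2})$ by a deterministic constant $C(R,T)$ and converges $\p$-a.s., for all $t$, to $\chi_R(\|u\|_{H^{s-3/2}})h(t,u)$. Dominated convergence then yields convergence in $L^2(\Omega\times[0,T];\LL_2(\U;H^{s-3/2}))$, and the It\^{o} isometry together with the BDG inequality gives $\int_0^\cdot G_2(t',u_\e)\,{\rm d}\W\to\int_0^\cdot \chi_R(\|u\|_{H^{s-3/2}})h(t',u)\,{\rm d}\W$ in probability in $C([0,T];H^{s-3/2})$. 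Letting $\e\to0$ in \eqref{approximate problem} shows that $u$ solves \eqref{cut-off problem}, $\p$-a.s., as an identity in $H^{s-5/2}$ valid for every $t\in[0,T]$.

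The next and harder step is to promote the path $u$ to $C([0,T];H^s)$. By \eqref{u L2 bound} and the continuity $u\in C([0,T];H^{s-3/2})$, the path takes values in $H^s$ with $\sup_{[0,T]}\|u(t)\|_{H^s}<\infty$ $\p$-a.s., and boundedness in $H^s$ together with continuity in the weaker topology already forces $u\in C_w([0,T];H^s)$ $\p$-a.s. It then suffices to show that $t\mapsto\|u(t)\|_{H^s}^2$ is $\p$-a.s. continuous, since in a Hilbert space weak convergence together with convergence of norms implies strong convergence. For this I would apply It\^{o}'s formula to $\|J_\delta u\|_{H^s}^2$---legitimate because $J_\delta u$ and $J_\delta[(\HH u)u_x]$ are $H^s$-valued and $J_\delta u$ solves the equation obtained by applying $J_\delta$ to \eqref{cut-off problem}---and then let $\delta\to0$. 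The stochastic term converges to a continuous martingale and the It\^{o} correction to a continuous process, while for the drift term $\left(D^sJ_\delta[(\HH u)u_x],D^sJ_\delta u\right)_{L^2}$ the commutator/cancellation structure behind Lemma \ref{Huux Hs inner product} (cf.\ \eqref{Huux u Hs}) shows that the integrand stays controlled uniformly in $\delta$ by $Q(\|u_x\|_{L^\infty}+\|\HH u_x\|_{L^\infty})\|u\|_{H^s}^2$ and converges pointwise; dominated convergence then produces a representation of $\|u(t)\|_{H^s}^2$ as $\|u_0\|_{H^s}^2$ plus terms continuous in $t$. Hence $t\mapsto\|u(t)\|_{H^s}$ is continuous, so $u\in L^2(\Omega;C([0,T];H^s))$, and \eqref{u L2 bound} gives $\E\sup_{[0,T]}\|u\|_{H^s}^2\le C(R,T,u_0)$.

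I expect the main obstacle to be precisely this last step---rigorously justifying the It\^{o} formula for $\|u(t)\|_{H^s}^2$ via the mollification-and-limit argument, i.e.\ establishing the $\delta$-uniform control and the $\delta\to0$ convergence of $\left(D^sJ_\delta[(\HH u)u_x],D^sJ_\delta u\right)_{L^2}$ through the Kato--Ponce commutator estimate and integration by parts. Once this energy identity is available, the weak-continuity-plus-norm-convergence argument is routine, and the passage to the limit in the equation is also routine given Lemmas \ref{Convergene of ue probability}--\ref{Convergence of ue} and Proposition \ref{global solution to appro and estimates}.
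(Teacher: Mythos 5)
Your proposal is correct and follows the paper's strategy for the first part (passing to the limit in \eqref{approximate problem} using Lemma \ref{Convergence of ue}, then establishing $u\in C_w([0,T];H^s)$ and reducing the problem to the continuity of $t\mapsto\|u(t)\|_{H^s}$ via mollification and It\^{o}'s formula for $\|J_\delta u\|_{H^s}^2$), but it diverges at the decisive final step. You derive an energy \emph{identity} by sending $\delta\to0$ inside the It\^{o} formula, which requires genuine pointwise convergence of each term: in particular you need the commutator $[J_\delta,\HH u]D^su_x$ to converge to zero \emph{strongly} in $L^2$ (a Friedrichs-type lemma, strictly stronger than the uniform bound of Lemma \ref{Je commutator} that the paper states), and you need a localization by the stopping times $\tau_N$ to justify the convergence of the stochastic integrals in probability in $C([0,T];H^{s-3/2})$ --- neither is fatal, but both need to be supplied. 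The paper instead avoids any $\delta\to0$ limit of the identity: it uses Lemma \ref{Huux Hs inner product} (estimate \eqref{Huux u Hs}), the BDG inequality and Assumption \ref{Assumption-1} only to obtain the increment bound $\E[(\|J_\delta u(t_1\wedge\tau_N)\|_{H^s}^2-\|J_\delta u(t_2\wedge\tau_N)\|_{H^s}^2)^4]\le C(N,T)|t_1-t_2|^2$ \emph{uniformly} in $\delta$, removes the mollifier by Fatou's lemma (using $\|J_\delta u\|_{H^s}\to\|u\|_{H^s}$ pointwise, from \eqref{mollifier property 1}), and concludes by Kolmogorov's continuity theorem. Your route buys an explicit It\^{o} representation of $\|u(t)\|_{H^s}^2$ (useful later, e.g.\ for blow-up arguments), at the cost of the extra convergence lemmas; the paper's route needs only uniform-in-$\delta$ bounds, at the cost of a fourth-moment computation and an appeal to Kolmogorov's criterion together with the lower semicontinuity coming from weak continuity to identify the continuous modification with $\|u\|_{H^s}^2$ itself.
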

			
			\begin{proof}
				Since for each $\e\in(0,1)$, $u_\e$ is $\{\mathcal{F}_t\}_{t\geq0}$ progressive measurable, so is $u$. 
				By Lemma \ref{Convergence of ue} and the embedding $H^{s-3/2}\hookrightarrow \Wlip$, we can send $\e\rightarrow0$ in \eqref{approximate problem} to conclude that  $u$ solves \eqref{cut-off problem}. Now we only need to prove \eqref{L2 moment bound}. 
				Due to  Lemma \ref{Convergence of ue},  $u\in C([0,T];H^{s-3/2})\cap L^\infty\left(0,T;H^s\right) $ almost surely. Since $H^s$ is dense in $H^{s-3/2}$, we see that (\cite[page 263, Lemma 1.4]{Temam-1977-book}) $u\in C_w\left([0,T];H^s\right)$,  where $C_w\left([0,T];H^s\right)$ is the space of weakly continuous functions with values in $H^s$.  Therefore to prove \eqref{L2 moment bound}, we only need to  prove the continuity of $[0,T]\ni t\mapsto \|u(t)\|_{H^s}$.  
				
				However, we cannot directly apply the It\^{o} formula for $\|u\|^2_{H^s}$ to get control of $\E\|u(t)\|_{H^s}^2$ because we only have $u\in H^s$ and $(\HH u)u_x\in H^{s-1}$. Indeed, the It\^{o} formula in a Hilbert space  (\cite[Theorem 4.32]{Prato-Zabczyk-2014-Cambridge} or \cite[Theorem 2.10]{Gawarecki-Mandrekar-2010-Springer})  requires $ \left((\HH u)u_x, u\right)_{H^s}$ to be well-defined and the It\^{o} formula  under a  Gelfand triplet (\cite[Theorem I.3.1]{Krylov-Rozovskiui-1979-chapter} or \cite[Theorem 4.2.5]{Prevot-Rockner-2007-book}) requires the dual product ${}_{H^{s-1}}\langle (\HH u) u_x, u\rangle_{H^{s+1}}$ to be well-defined. In our case neither of them is satisfied.  To this end, we recall the mollifier $J_\e$ defined in Section \ref{Preliminaries} and apply the It\^{o} formula to $\|J_\e u\|^2_{H^s}$ to obtain
				\begin{align}
					{\rm d}\|J_\e u(t)\|^2_{H^s}
					=&2\chi_R(\|u\|_{\Wlip}) \left(J_\e h(t,u){\rm d}\W,J_\e u\right)_{H^s}\nonumber\\
					&-2\chi_R(\|u\|_{\Wlip}) \left(J_{\varepsilon} [(\HH u)u_x], J_\e u\right)_{H^s}{\rm d}t\notag\\
					&+\chi^2_R(\|u\|_{\Wlip})\|J_\e h(t,u)\|_{\LL_2(\U;H^s)}^2{\rm d}t.\label{TnX 2}
				\end{align}
				By \eqref{u L2 bound}, 
				\begin{equation} 
					\tau_N=\inf\{t\ge0:\|u(t)\|_{H^s}>N\}\rightarrow \infty\ \text{as}\ N\rightarrow\infty\ \ \p-a.s.\label{tau N continuous in t}
				\end{equation}
				Then we only need to prove the continuity up to time $\tau_N\wedge T$ for each $N\ge 1$. 
				We first notice that $J_\e$ satisfies \eqref{mollifier property 4}, \eqref{H Je Ds} and \eqref{H Je Hs norm}. 
				Therefore for any $[t_2,t_1]\subset[0,T]$ with $t_1-t_2<1$, we use Lemma \ref{Huux Hs inner product}, the BDG inequality and Assumption \ref{Assumption-1}  and \eqref{tau N continuous in t} to find
				\begin{align*}
					\E\left[\left(\|J_\e u(t_1\wedge\tau_N)\|^2_{H^s}-\|J_\e u(t_2\wedge\tau_N)\|^2_{H^s}\right)^4\right]
					\leq& C(N,T)|t_1-t_2|^{2}.
				\end{align*}
				Using Fatou's lemma, we arrive at
				\begin{align*}
					\E\left[\left(\|u(t_1\wedge\tau_N)\|^2_{H^s}-\| u(t_2\wedge\tau_N)\|^2_{H^s}\right)^4\right]
					\leq& C(N,T)|t_1-t_2|^{2}.
				\end{align*}
				This and Kolmogorov's continuity theorem ensure the continuity of $t\mapsto\|u(t\wedge\tau_N)\|_{H^{s}}$, completing the proof. 
			\end{proof}

			\subsection{Concluding the proof of Theorem \ref{Local pathwise solution}}
			Finally, we are in the position to finish the proof of Theorem \ref{Local pathwise solution}.  For the sake of clarity, we split the proof into three steps.

			\textit{Step 1: Existence.} For $u_0(\omega,x)\in L^2(\Omega; H^s)$, we let  
			$$
			\Omega_k=\{k-1\leq\|u_0\|_{H^s}<k\},\ k\in\N,\ k\ge1.
			$$
			Since $\E\|u_0\|^2_{H^s}<\infty$, we have  
			\begin{align*}
				u_0(\omega,x)&=\sum_{k\geq1}u_{0,k}(\omega,x)
				=\sum_{k\geq1}u_0(\omega,x)\textbf{1}_{\{k-1\leq\|u_0\|_{H^s}<k\}}\ \  \p-a.s.
			\end{align*}
			On account of Proposition \ref{global solution to cut-off problem}, we let $u_{k,R}$ be the pathwise  global solution to the cut-off problem \eqref{cut-off problem} with initial value $u_{0,k}$ and cut-off function $\chi_R(\cdot)$. Define
			\begin{equation}\label{Remove tau k}
				\tau_{k,R}=\inf\left\{t>0:\sup_{t'\in[0,t]}\|u_{k,R}(t')\|^2_{H^s}>\|u_{0,k}\|^2_{H^s}+2\right\}.
			\end{equation}
			Then for any $R>0$ and $k\in\N$, we have $\p\{\tau_{k,R}>0\}=1$. The difficulty here is that we have to take $R$ to be deterministic. Otherwise Proposition \ref{global solution to appro and estimates} will fail. To overcome this difficulty, we let $R=R_k$ be discrete and then denote $(u_k,\tau_k)=(u_{k,R_k},\tau_{k,R_k})$. It is clear that
			$\p\{\tau_k>0,\ \forall\ k\ge1\}=1$. Let $E>0$ be the embedding constant such that $\|\cdot\|_{\Wlip}\leq E \|\cdot\|_{H^s}$ for $s>3$. Particularly, we take $R^2_k>E^2\|u_{0,k}\|^2_{H^s}+2E^2$, and then we have
			$$\p\left\{\|u_k\|^2_{W^{1,\infty}}\leq E^2\|u_k\|^2_{H^{s}}\leq E^2\|u_{0,k}\|^2_{H^{s}}+2E^2<R^2_k,\ \forall\ t\in[0,\tau_{k}],\ \forall\  k\ge1\right\}=1,$$
			which means $$\p\left\{\chi_{R_k}(\|u_k\|_{W^{1,\infty}})=1,\ \forall\ t\in[0,\tau_k],\ \forall\ k\ge1\right\}=1.$$
			Therefore $(u_k,\tau_k)$ is the pathwise solution  to \eqref{SCCF problem} with initial value $u_{0,k}$.
			Notice that
			\begin{align*}
				\textbf{1}_{\Omega_k}u_k(t\wedge \tau_k)-\textbf{1}_{\Omega_k}u_{0,k}
				=&\int_0^{t\wedge \textbf{1}_{\Omega_k}\tau_k}\textbf{1}_{\Omega_k}[(\HH u_k)\partial_x u_k]\, {\rm d}t'
				+\int_0^{t\wedge \textbf{1}_{\Omega_k}\tau_k}\textbf{1}_{\Omega_k} h(t,u_k){\rm d}\W,
			\end{align*}
			$$\textbf{1}_{\Omega_k} h(t,u_k)=h(t,\textbf{1}_{\Omega_k} u_k)-\textbf{1}_{\Omega^C_k}h(t,0)$$
			and
			$$\textbf{1}_{\Omega_k} [(\HH u_k)\partial_x u_k]=(\HH \textbf{1}_{\Omega_k} u_k)\partial_x \textbf{1}_{\Omega_k} u_k.$$
			By Assumption \ref{Assumption-1}, we have $\|h(t,\mathbf{0})\|_{\LL_2(\U;H^s)}<\infty$. Then we have
			\begin{align*}
				&\textbf{1}_{\Omega_k}u_k(t\wedge \tau_k)-\textbf{1}_{\Omega_k}u_{0,k}\\
				=&\textbf{1}_{\Omega_k}u_k(t\wedge \textbf{1}_{\Omega_k}\tau_k)-u_{0,k}\\
				=&-\int_0^{t\wedge \textbf{1}_{\Omega_k}\tau_k}(\HH \textbf{1}_{\Omega_k} u_k)\partial_x (\textbf{1}_{\Omega_k} u_k)\, {\rm d}t'
				+\int_0^{t\wedge \textbf{1}_{\Omega_k}\tau_k}h(t,\textbf{1}_{\Omega_k} u_k){\rm d}\W.
			\end{align*}
			Therefore $(\textbf{1}_{\Omega_k}u_k,\textbf{1}_{\Omega_k}\tau_k)$ is a solution to \eqref{SCCF problem} with initial data $u_{0,k}$. Since $\Omega_k\bigcap\Omega_{k'}=\emptyset$ for $k\neq k'$ and $\bigcup_{k\geq1}\Omega_k$ is a set of full measure, we  see that
			\begin{equation*}
				\left(u=\sum_{k\geq1}\textbf{1}_{\{k-1\leq\|u_0\|_{H^s}<k\}}u_k,\ \
				\tau=\sum_{k\geq1}\textbf{1}_{\{k-1\leq\|u_0\|_{H^s}<k\}}\tau_k\right)
			\end{equation*}
			is a pathwise solution to \eqref{SCCF problem} corresponding to the initial condition $u_0$. Besides, using \eqref{Remove tau k}, we have
			\begin{align*}
				\sup_{t\in[0,\tau]}\|u\|_{H^s}^2
				=&\sum_{k\geq1}\textbf{1}_{\{k-1\leq\|u_0\|_{H^s}<k\}}\sup_{t\in[0,\tau_k]}\|u_k\|_{H^s}^2\\
				\leq&\sum_{k\geq1}\textbf{1}_{\{k-1\leq\|u_0\|_{H^s}<k\}}\left( \|u_{0,k}\|^2_{H^s}+2\right)
				\leq2\|u_{0}\|^2_{H^s}+4.
			\end{align*}
			Taking expectation gives rise to \eqref{L2 moment bound}. 
			
			\textit{Step 2: Uniqueness and maximal pathwise solution.} With $(u,\tau)$ in hand, we can extend $(u,\tau)$ to a maximal pathwise solution in the sense of Definition \ref{pathwise solution definition} by following the techniques as in \cite{Crisan-Flandoli-Holm-2018-JNS,GlattHoltz-Vicol-2014-AP,GlattHoltz-Ziane-2009-ADE,Rockner-Zhu-Zhu-2014-SPTA}. 
			For uniqueness, we let $(u_1,\tau_1)$ and $(u_2,\tau_2)$ be two solutions to \eqref{SCCF problem} such that $u_j(0)=u_0$ almost surely and $u_j(\cdot\wedge \tau_j)\in L^2\left(\Omega; C\left([0,\infty);H^s\right)\right)$ with $s>3$ for $j=1,2$. Let $\frac12<\delta<s-1$ and define
			\begin{equation*}
				\tau^{T}_{K}=\inf\left\{t\geq0:\|u_1(t)\|_{H^{s}}+\|u_2(t)\|_{H^{s}}\geq K\right\}\wedge T,\ \ K\in\N,\ T>0.
			\end{equation*}
			Using \eqref{assumption 2 for h} and the definition of $\tau^{T}_{K}$, 
			then the estimate of $\E\sup_{t\in[0,\tau^T_{K}]}\|u_1(t)-u_2(t)\|^2_{H^{\delta}}$ is essential as in the derivation of \eqref{Cauchy H s-3/2} and we have
			\begin{align*}
				\E\sup_{t\in[0,\tau^T_{K}]}\|u_1(t)-u_2(t)\|^2_{H^{\delta}}
				=0.
			\end{align*}
			If necessary, to guarantee $\tau^T_{K}>0$ almost surely, we can first assume $u_0\in L^\infty(\Omega;H^s)$ and then remove this restriction by using the techniques as in \textit{Step 1}. Hence we obtain uniqueness and  the details are omitted here for brevity.  
			
			\textit{Step 3: Blow-up criterion.} We first define
			\begin{align*}
				\tau_{1,m}=\inf\left\{t\geq0: \|u(t)\|_{H^s}\geq m\right\},\ \ \
				\tau_{2,n}=\inf\left\{t\geq0: \|u_x(t)\|_{L^{\infty}}+\|\HH u_x(t)\|_{L^{\infty}}\geq n\right\},
			\end{align*}
			and then let $\displaystyle\tau_1=\lim_{m\rightarrow\infty}\tau_{1,m}$ and $\displaystyle\tau_2=\lim_{n\rightarrow\infty}\tau_{2,n}$.  
			We notice that
			for fixed $m,n>0$, even if $\p\{\tau_{1,m}=0\}$ or $\p\{\tau_{2,n}=0\}$ is larger than $0$, for a.e. $\omega\in\Omega$, there is $m>0$ or $n>0$ such that $\tau_{1,m},\tau_{2,n}>0$. By continuity of $\|u(t)\|_{H^s}$ and the uniqueness of $u$, it is easy to check that $\tau_1=\tau_2$ is actually the maximal existence time $\tau^*$ of $u$ in the sense of Definition \ref{pathwise solution definition}. Therefore
			to prove \eqref{Blow-up criterion common},   we only need to
			verify that 
			\begin{equation}\label{tau1=tau2 blow-up}
				\tau_{1}=\tau_{2} \ \ \p-a.s.
			\end{equation}
			The approach here is motivated by \cite{Crisan-Flandoli-Holm-2018-JNS,Alonso-Rohde-Tang-2021-JNLS}. Since $H^s\hookrightarrow W^{1,\infty}$ and $\HH$ is continuous in $H^s$ (cf. \eqref{H Je Hs norm}), there exists a constant $M>0$ such that,
			\begin{align*}
				\sup_{t\in[0,\tau_{1,m}]}\left(\|u_x(t)\|_{L^{\infty}}+\|\HH u_x\|_{L^{\infty}}\right)\leq M\sup_{t\in[0,\tau_{1,m}]}\|u(t)\|_{H^s}
				\leq ([M]+1)m,
			\end{align*}
			where $[M]$ denotes the integer part of $M$. Therefore we have
			$\tau_{1,m}\leq\tau_{2,([M]+1)m}\leq \tau_2$ $\p-a.s.,$
			which means that
			$\tau_{1}\leq \tau_2$ $\p-a.s.$
			Now we only need to prove $\tau_{2}\leq \tau_1$ $\p-a.s.$ We do the following claim:
			
			\textbf{Claim:} 
			\begin{align}
				\p\left\{\sup_{t\in[0,\tau_{2,n_1}\wedge n_2]}\|u(t)\|_{H^s}<\infty\right\}=1,\ \
				\forall\ n_1,n_2\in\N.\label{tau2<tau1 condition}
			\end{align}
			As is explained before,
			we cannot directly apply the It\^{o} formula for $\|u\|^2_{H^s}$ to get control of $\E\|u(t)\|_{H^s}^2$.
			Similar to \eqref{TnX 2}, by applying $J_\e$ to \eqref{SCCF problem} and using the It\^{o} formula for $\|J_\e u\|^2_{H^s}$, we have that 
			for  any $t>0$,	
			\begin{align}
				{\rm d}\|J_\e u(t)\|^2_{H^s}
				=& \left(J_\e h(t,u){\rm d}\W,J_\e u\right)_{H^s}
				-2\left(D^sJ_\e\left[(\HH u)u_x\right],D^sJ_\e u\right)_{L^2}\, {\rm d}t 
				+ \|J_\e h(t,u)\|_{\LL_2(\U;H^s)}^2\, {\rm d}t.\label{Jn X 2}
			\end{align}
			By the BDG inequality, we have
			\begin{align*}
				&\E\sup_{t\in[0,\tau_{2,n_1}\wedge n_2]}\|J_\e u(t)\|^2_{H^s}\\
				\leq&\E\|J_\e u_0\|^2_{H^s}
				+C\E\left(\int_0^{\tau_{2,n_1}\wedge n_2}\|J_\e h(t,u)\|^2_{\LL_2(\U; H^s)}\|J_\e u\|^2_{H^s}{\rm d}t\right)^{\frac12}\\
				&+2\E\int_0^{\tau_{2,n_1}\wedge n_2}|\left(D^sJ_\e\left[(\HH u)u_x\right],D^sJ_\e u\right)_{L^2}|{\rm d}t
				+\E\int_0^{\tau_{2,n_1}\wedge n_2}\|J_\e h(t,u)\|_{\LL_2(\U;H^s)}^2{\rm d}t.
			\end{align*}
			Then \eqref{assumption 1 for h} and \eqref{H Je Hs norm}  lead to
			\begin{align*}
				&C\E\left(\int_0^{\tau_{2,n_1}\wedge n_2}\|J_\e h(t,u)\|^2_{\LL_2(\U; H^s)}\|J_\e u\|^2_{H^s}{\rm d}t\right)^{\frac12}\\
				\leq& \frac12\E\sup_{t\in[0,\tau_{2,n_1}\wedge n_2]}\|J_\e u\|_{H^s}^2
				+Cf^2(2n_1)\int_0^{n_2}\left(1+\E\|u\|_{H^s}^2\right){\rm d}t.
			\end{align*}
			By Lemma \ref{Huux Hs inner product}, we find
			\begin{align}
				2\E\int_0^{\tau_{2,n_1}\wedge n_2}|\left(D^sJ_\e\left[(\HH u)u_x\right],D^sJ_\e u\right)_{L^2}|{\rm d}t\leq Cn_1\int_0^{n_2}\left(1+\E\|u\|_{H^s}^2\right){\rm d}t.\label{Huux u blow-up}
			\end{align}
			It follows from \eqref{assumption 1 for h} that for some constant $C>0$,
			\begin{align*}
				\E\int_0^{\tau_{2,n_1}\wedge n_2}\|J_\e h(t,u)\|_{\LL_2(\U;H^s)}^2\, {\rm d}t\leq Cf^2(2n_1)\int_0^{n_2}\left(1+\E\|u\|_{H^s}^2\right){\rm d}t,
			\end{align*}
			Therefore we combine the above estimates, use \eqref{H Je Hs norm},
			and then send $\e\rightarrow0$ in the resulting inequality to obtain
			\begin{align}\label{u Hs estimate}
				\E\sup_{t\in[0,\tau_{2,n_1}\wedge n_2]}\|u(t)\|^2_{H^s}
				\leq C\E\|u_0\|^2_{H^s}+ C\int_0^{n_2}\left(1+\E\sup_{t'\in[0,t\wedge\tau_{2,n_1}]}\|u(t')\|_{H^s}^2\right){\rm d}t.
			\end{align}
			Then Gr\"{o}nwall's inequality shows that for each $n_1,n_2\in\N$, there is a constant $C=C(n_1,n_2,u_0)>0$ such that $$\E\sup_{t\in[0,\tau_{2,n_1}\wedge n_2]}\|u(t)\|^2_{H^s}<C(n_1,n_2,u_0),$$ which gives \eqref{tau2<tau1 condition} and concludes the claim.
			
			Hence \eqref{tau2<tau1 condition} implies that for all $n_1,n_2\in\N$, $\p\left\{\sup_{t\in[0,\tau_{2,n_1}\wedge n_2]}\|u(t)\|_{H^s}<\infty\right\}=1$. On the other hand,
			it is easy to see that for all $n_1,n_2\in\N$,
			\begin{align*}
				\left\{\sup_{t\in[0,\tau_{2,n_1}\wedge n_2]}\|u(t)\|_{H^s}<\infty\right\}
				\subset\bigcup_{m\in\N}\left\{\tau_{2,n_1}\wedge n_2\leq\tau_{1,m}\right\}
				\subset\left\{\tau_{2,n_1}\wedge n_2\leq\tau_{1}\right\}.
			\end{align*}
			Consequently,
			\begin{align}
				\p\left\{\tau_2\leq\tau_1\right\}
				=\p\left\{\bigcap_{n_1\in\N}\left\{\tau_{2,n_1}\leq \tau_{1}\right\}\right\}
				=\p\left\{\bigcap_{n_1,n_2\in\N}\left\{\tau_{2,n_1}\wedge n_2\leq \tau_{1}\right\}\right\}=1.\label{tau2<tau1}
			\end{align}
			Combining the above three steps, we complete the proof of Theorem \ref{Local pathwise solution}.	
			


			\section{Proof of Theorem \ref{Non blowup}}\label{Section:global:strong}
			
			To begin with, we can follow the steps as in the proof of Theorem \ref{Local pathwise solution} to obtain that, if $u_0$ is an $H^s$-valued $\mathcal{F}_0$-measurable random variable satisfying $\E\|u_0\|^2_{H^s}<\infty$ with $s>3$, then \eqref{SCCF non blow up Eq} has a unique pathwise solution $u\in H^s$ with maximal existence time $\tau^*$. 
			Now the target is to show that $\p\{\tau^*=\infty\}=1$. To this end, we define
			\begin{align*}
				\widehat{\tau}_{k}=\inf\left\{t\geq0: \|u(t)\|_{H^{s-1}}\geq k\right\},\ \ k\ge1\ \ \text{and}\ \ \widehat{\tau^*}=\lim_{k\rightarrow\infty}\widehat{\tau}_{k}.
			\end{align*}
			Recalling Remark \ref{Blow-up time remark}, we have 
			\begin{equation}\label{hat tau=tau star}
				\widehat{\tau^*}=\tau^*\ \ \p-a.s.
			\end{equation}
				Therefore we only need to show $\p\{\widehat{\tau^*}=\infty\}=1$.
			Applying the It\^{o} formula to 
			$\|u(t)\|^2_{H^{s-1}}$ gives
			\begin{align}
				{\rm d}\| u\|^2_{H^{s-1}}
				=\, & 2\left( \alpha(t,u), u\right)_{H^{s-1}}\, {\rm d}W
				-2\left((\HH u)u_x,  u\right)_{H^{s-1}}\, {\rm d}t
				+\| \alpha(t,u)\|^2_{H^{s-1}}\, {\rm d}t.\label{Jn non blow-up}
			\end{align}
			Let $\G\in\mathfrak{G}$. On account of the It\^{o} formula, we derive
			\begin{align*}
				{\rm d}\G(\| u\|^2_{H^{s-1}})
				=\, &2\G'(\| u\|^2_{H^{s-1}})\left( \alpha(t,u),  u\right)_{H^{s-1}}\, {\rm d}W\notag\\
				&+\G'(\| u\|^2_{H^{s-1}})
				\left\{-2\left((\HH u)u_x,  u\right)_{H^{s-1}}
				+\|\alpha(t,u)\|^2_{H^{s-1}}\right\}\, {\rm d}t\notag\\
				&	+2\G''(\| u\|^2_{H^{s-1}})\left|\left(\alpha(t,u), u\right)_{H^{s-1}}\right|^2\, {\rm d}t.
			\end{align*}
		Recall that in Assumption \ref{Assumption-alpha},
			$$\mathcal{M}(t)=2Q(\|u_x(t)\|_{L^\infty}+\|\HH u_x(t)\|_{L^\infty})\|u(t)\|^2_{H^{s-1}}
			+\|\alpha(t,u)\|^2_{H^{s-1}}.$$
			Hence taking expectation, using inequality \eqref{H Je Hs norm},  Lemma \ref{Huux Hs inner product} and Assumption \ref{Assumption-alpha} we find that for any $t>0$,
			\begin{align}
				&\E \G(\| u(t)\|^2_{H^{s-1}})  \notag\\
				=\, &\E \G(\|u_0\|^2_{H^{s-1}})+\E\int_0^{t}\G'(\|u\|^2_{H^{s-1}})
				\left\{-2\left( (\HH u)u_x , u\right)_{H^{s-1}}
				+\|\alpha(t',u)\|^2_{H^{s-1}}\right\}\, {\rm d}t'\notag\\
				&
				+\E\int_0^{t}2\G''(\|u\|^2_{H^{s-1}})\left|\left(\alpha(t',u), u\right)_{H^{s-1}}\right|^2\, {\rm d}t'\notag\\
				\leq\, &\E \left\{\G(\|u_0\|^2_{H^{s-1}})+\E\int_0^{t}\G'(\|u\|^2_{H^{s-1}})
				\mathcal{M}(t')+2\G''(\|u\|^2_{H^{s-1}})\left|\left(\alpha(t',u), u\right)_{H^{s-1}}\right|^2\right\}\, {\rm d}t'\notag\\
				\leq\, & \E \G(\| u_0\|^2_{H^{s-1}})+K_1 t-\E\int_0^{t}
				K_2\frac{\left\{\G'(\|u\|^2_{H^{s-1}})\left|\left(\alpha(t',u), u\right)_{H^{s-1}}\right|\right\}^2}{ 1+\G(\|u\|^2_{H^{s-1}})}
				\, {\rm d}t',\label{Huux u global existence}
			\end{align}
			which shows that there exists a constant $C=C(u_0,K_1,K_2,t)>0$ such that
			\begin{align}
				\E\int_0^{t}
				\frac{\left\{\G'(\|u\|^2_{H^{s-1}})\left|\left(\alpha(t',u), u\right)_{H^{s-1}}\right|\right\}^2}
				{ 1+\G(\|u\|^2_{H^{s-1}})}\, {\rm d}t'
				\leq C(u_0,K_1,K_2,t).\label{to use BDG 2}
			\end{align}
			Moreover,  for any $T>0$, it follows from Assumption \ref{Assumption-alpha} and the BDG inequality that
			\begin{align*}
				&\E\sup_{t\in[0,{T}]}\G(\|u\|^2_{H^{s-1}})\\
				\leq\, & \E \G(\|u_0\|^2_{H^{s-1}})+
				C\E\left(\int_0^{T}
				\left\{\G'(\|u\|^2_{H^{s-1}})\left|\left( \alpha(t,u), u\right)_{H^{s-1}}\right|\right\}^2
				\, {\rm d}t\right)^\frac12\\
				&+K_1T+K_2 \E\int_0^{T}
				\frac{\left\{\G'(\|u\|^2_{H^{s-1}})\left|\left(\alpha(t,u), u\right)_{H^{s-1}}\right|\right\}^2}{ 1+\G(\|u\|^2_{H^{s-1}})} \, {\rm d}t\\
				\leq\, &\E \G(\|u_0\|^2_{H^s})+
				\frac12\E\sup_{t\in[0,{T}]}\left(1+\G(\|u\|^2_{H^{s-1}})\right)
				+C\E\int_0^{T}
				\frac{\left\{\G'(\|u\|^2_{H^{s-1}})
					\left|\left(\alpha(t,u), u\right)_{H^{s-1}}\right|\right\}^2}
				{ 1+\G(\|u\|^2_{H^{s-1}})}
				\, {\rm d}t\\
				&+K_1T
				+K_2\E\int_0^{T}\frac{\left\{\G'(\|u\|^2_{H^{s-1}})\left|\left(\alpha(t,u), u\right)_{H^{s-1}}\right|\right\}^2}{ 1+\G(\|u\|^2_{H^{s-1}})}
				\, {\rm d}t.
			\end{align*}
			Thus, using \eqref{to use BDG 2} we obtain
			\begin{align*}
				\E\sup_{t\in[0,{T}]}\G(\| u\|^2_{H^{s-1}})
				&\leq\, 
				C(u_0,K_1,T)+C(K_2) \E\int_0^{T}
				\frac{\left\{\G'(\|u\|^2_{H^{s-1}})\left|\left(\alpha(t,u), u\right)_{H^{s-1}}\right|\right\}^2}{ 1+\G(\|u\|^2_{H^{s-1}})}
				\, {\rm d}t\\
				&\leq\, C(u_0,K_1,K_2,T).
			\end{align*}
			We can infer from the above estimate that
			\begin{equation*}
				\p\{\widehat{\tau^*}<T\}\leq \p\{\widehat{\tau}_k<T\}\leq 
				\p\left\{\sup_{t\in[0,T]}\G(\|u\|^2_{H^{s-1}})\geq \G(k^2)\right\}\leq \frac{C(u_0,K_1,K_2,T)}{\G(k^2)}.
			\end{equation*}
			Therefore, since $\lim_{x\rightarrow\infty}\G(x)=\infty$,
			one can send $k\rightarrow\infty$ to identify that
			$\p\{\widehat{\tau^*}<T\}=0$. Since $T>0$ is arbitrary, we have that $\p\{\widehat{\tau^*}=\infty\}=1$ which shows the desired assertion.

		\begin{Remark}\label{Remark global existence}
		We remark that the using of Lyapunov is motivated by the non-explosion text \cite{Hasminskii-1969-Book}, see also \cite{Brzezniak-etal-2005-PTRF,Ren-Tang-Wang-2020-Arxiv,Rohde-Tang-2021-NoDEA}. In the above proof, \eqref{Huux u Hs-1} is used to obtain \eqref{Huux u global existence}, but we remark that \eqref{Huux u Hs} is also used implicitly. Indeed, as in the proof of \eqref{tau1=tau2 blow-up}, \eqref{Huux u Hs} is used to obtain \eqref{Huux u blow-up}, and here \eqref{hat tau=tau star}  also requires \eqref{Huux u Hs} because it is a consequence of \eqref{tau1=tau2 blow-up}. In this work we estimate $H^{s-1}$ norm (i.e., $\E\sup_{t\in[0,{T}]}\G(\| u\|^2_{H^{s-1}})$) and use the fact \eqref{hat tau=tau star} to prove global existence. Let us stress that this is different from the recent work \cite{Rohde-Tang-2021-NoDEA}, where the authors estimate $H^s$ norm of the solution to show global existence.
		\end{Remark}

			\section{Proof of Theorem \ref{linear:theorem:blowup}}\label{Sec:blow:up:linear}
			
			Due to the linear nature of the noise we use the Girsanov type transformation
			\begin{equation}\label{Girsanov:type}
				v=\frac{1}{\beta(\omega,t)} u,\ \
				\beta(\omega,t)={\rm e}^{\int_0^tb(t') {\rm d} W_{t'}-\int_0^t\frac{b^2(t')}{2} {\rm d}t'}.
			\end{equation}
			In the following lemma we show that the defined process $v$ is the solution to a random PDE enjoying desired regularity properties.
			\begin{Lemma}\label{WP:gisanov:transformation}
				Let $s>3$, $b(t)$ satisfies assumption \ref{Assumption-3} and fix $\s=\left(\Omega, \mathcal{F},\p,\{\mathcal{F}_t\}_{t\geq0}, W\right)$. Assume that $u_0(\omega,x)$ is an $H^s$-valued $\mathcal{F}_0$-measurable random variable with $\E\|u_0\|^2_{H^s}<\infty$ and $(u,\tau^*)$ is the corresponding unique maximal pathwise solution to \eqref{linear:SCCF:problem}.
				Then for $t\in[0,\tau^*)$,  the process $v$ as in \eqref{Girsanov:type} is a solution $\mathbb{P}-a.s.$ to 
				\begin{equation} \label{Solution:v:girsanov}
					\left\{\begin{aligned}
						&v_t+\beta   (\HH v) v_x=0, \quad \beta(\omega,t)= {\rm e}^{\int_0^tb(t') {\rm d} W_{t'}-\int_0^t\frac{b^2(t')}{2} {\rm d}t'}, \\
						&v(\omega,0,x)=u_0(\omega,x), \quad x\in \mathbb{R}
					\end{aligned} \right.
				\end{equation}
				with $v\in C\left([0,\tau^*);H^s\right)\bigcap C^1([0,\tau^*) ;H^{s-1})$ $\mathbb{P}-a.s.$
			\end{Lemma}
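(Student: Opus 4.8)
The plan is to remove the linear multiplicative noise by the Girsanov-type substitution and then read off the regularity from the resulting transport equation. First I would record the It\^o dynamics of the integrating factor: writing $X_t=\int_0^t b(t')\,{\rm d}W_{t'}-\int_0^t\frac{b^2(t')}{2}\,{\rm d}t'$, so that $\beta={\rm e}^{X_t}$, It\^o's formula gives ${\rm d}\beta=b(t)\beta\,{\rm d}W$, and applying It\^o to ${\rm e}^{-X_t}$ gives ${\rm d}(\beta^{-1})=\beta^{-1}\big(-b(t)\,{\rm d}W+b^2(t)\,{\rm d}t\big)$. Next I would apply the It\^o product rule in $H^s$ to $v=\beta^{-1}u$, using that $u$ solves \eqref{linear:SCCF:problem}, i.e.\ ${\rm d}u+(\HH u)u_x\,{\rm d}t=b(t)u\,{\rm d}W$. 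The cross-variation term is $-\beta^{-1}b^2(t)u\,{\rm d}t$ (coming from the martingale parts $\int_0^t b u\,{\rm d}W$ of $u$ and $-\int_0^t b\beta^{-1}\,{\rm d}W$ of $\beta^{-1}$), and one checks that the two stochastic integrals cancel and so do the It\^o correction drifts, leaving
\begin{equation*}
	{\rm d}v=-\beta^{-1}(\HH u)u_x\,{\rm d}t.
\end{equation*}
Since $\beta=\beta(\omega,t)$ is independent of $x$, one has $\HH u=\beta\,\HH v$ and $u_x=\beta\,v_x$, hence $\beta^{-1}(\HH u)u_x=\beta\,(\HH v)v_x$, which yields the random PDE $v_t+\beta\,(\HH v)v_x=0$ with $v(\omega,0,\cdot)=u_0(\omega,\cdot)$, i.e.\ \eqref{Solution:v:girsanov}.

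To make the It\^o computation rigorous I would first localize: let $\tau_n\uparrow\tau^*$ be the announcing sequence of stopping times from Definition \ref{pathwise solution definition} (so $u(\cdot\wedge\tau_n)\in C([0,\infty);H^s)$ $\p$-a.s. and $\sup_{[0,\tau_n]}\|u\|_{H^s}\le n$), carry out the product-rule identity on $[0,\tau_n]$, and then send $n\to\infty$ using the continuity of all objects in $t$; since $\beta$ and $\beta^{-1}$ are continuous, strictly positive, and bounded on compact time intervals $\p$-a.s., no integrability issue arises on $[0,\tau_n]$. For the regularity of $v$: from $v=\beta^{-1}u$ and $u(\cdot\wedge\tau_n)\in C([0,\infty);H^s)$ together with the pathwise continuity and positivity of $\beta^{-1}$, we get $v\in C([0,\tau^*);H^s)$ $\p$-a.s. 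Then I would bootstrap the time-regularity from the equation itself: for $s>3$ the product estimate in Lemma \ref{Kato-Ponce commutator estimate} (with $\HH$ bounded on $H^s$, cf.\ \eqref{H Je Hs norm}, and the embedding $H^{s}\hookrightarrow W^{1,\infty}$) gives $\|(\HH v)v_x\|_{H^{s-1}}\lesssim\|v\|_{H^s}^2$, so $t\mapsto \beta(\omega,t)(\HH v)v_x$ lies in $C([0,\tau^*);H^{s-1})$ $\p$-a.s.; consequently $v_t=-\beta\,(\HH v)v_x\in C([0,\tau^*);H^{s-1})$ and hence $v\in C^1([0,\tau^*);H^{s-1})$ $\p$-a.s.

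I expect the computation of the cancellations to be entirely routine; the only genuinely delicate point is the rigorous justification of the It\^o product rule for $\beta^{-1}u$ in the infinite-dimensional setting, i.e.\ interpreting $u$ as an $H^s$-valued semimartingale with the stated martingale part and invoking the integration-by-parts formula for the product of a scalar It\^o process with a Hilbert-space-valued one. This is handled by the localization above, after which the identity is an equality of continuous $H^{s-1}$-valued processes that holds for all $t\in[0,\tau^*)$ $\p$-a.s.
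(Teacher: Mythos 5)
Your proposal is correct and follows essentially the same route as the paper: compute ${\rm d}(\beta^{-1})$ by It\^o's formula, apply the product rule to $v=\beta^{-1}u$ so that the stochastic integrals and It\^o corrections cancel, and then read off the $C([0,\tau^*);H^s)\cap C^1([0,\tau^*);H^{s-1})$ regularity from $v=\beta^{-1}u$ and the resulting transport equation. The only difference is that you spell out the localization and the product estimate for $(\HH v)v_x$ explicitly, which the paper leaves as "straightforward computation."
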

			
			\begin{proof}
				Applying Theorem \ref{Local pathwise solution} for the particular case $h(t,u)= b(t)u$ and noticing that  $b(t)$ satisfies Assumption \ref{Assumption-3} (and therefore $h(t,u)=b(t)u$ satisfies Assumption \ref{Assumption-1}), we infer that equation \eqref{linear:SCCF:problem} has a unique maximal pathwise solution $(u,\tau^*)$. It\^{o}'s formula yields 
				$$ {\rm d}\frac{1}{\beta}= -b(t)\frac{1}{\beta} {\rm d}W+ b^{2}(t)\frac{1}{\beta} {\rm d}t,$$
				and hence straightforward computation shows that 
				\begin{equation}
					{\rm d}v=-\beta   (\HH v) v_x {\rm d} t,
				\end{equation}
				yielding the first equation in \eqref{Solution:v:girsanov}. At time $t=0$, $v(\omega,0,x)=u_{0}(\omega,x)$ since $\beta(\omega,0)=1$ almost surely, so $v$ satisfies \eqref{Solution:v:girsanov} almost surely. Furthermore, Theorem \ref{Local pathwise solution} shows that $u\in C\left([0,\tau^*);H^s\right)$ $\p-a.s.$, hence $v\in C\left([0,\tau^*);H^s\right)$ and $v\in C^{1}\left([0,\tau^*);H^{s-1}\right)$ $\p-a.s.$.
			\end{proof}

				Invoking Lemma \ref{WP:gisanov:transformation} we have that a.e. $\omega\in\Omega$ the process $v(\omega,t,x)$ solves \eqref{Solution:v:girsanov} on $[0,\tau^*)$ and $v\in C\left([0,\tau^*);H^s\right)\bigcap C^1([0,\tau^*) ;H^{s-1})$ for $s>3$. In particular, by the Sobolev embedding, $v\in C\left([0,\tau^*);C^1\right)$, therefore for a.e. $\omega\in\Omega$, the particle trajectory mapping related to the process $v$ given by
				\begin{equation} \label{particle:trajectory}
					\left\{\begin{aligned}
						&\frac{{\rm d} \phi(\omega,t,x)}{{\rm d}t}=\beta(\omega,t)\HH v(\omega,t,\phi(\omega,t,x)), \quad t\in [0,\tau^*), \\
						&\phi(\omega,0,x)=x, \quad x\in \mathbb{R},
					\end{aligned} \right.
				\end{equation}
				has a unique  solution  $\phi(\omega,t,x)\in C^1([0,\tau^*)\times \mathbb{R})$.  Now for a.e. $\omega\in \Omega$, we let $x_0=x_0(\omega)\in \mathbb{R}$ be the point that $u_{0}$ attains it global maximum, i.e., 
				$$v(\omega,0,x_0(\omega))= u_{0}(\omega,x_0(\omega))=\displaystyle \max_{x\in \mathbb{R}}u_{0}(\omega,x), \mbox{ for a.e. } \omega\in \Omega. $$ Then we focus on the  particle trajectory mapping from $x_0$ in \eqref{particle:trajectory}, i.e.,
				\begin{equation} \label{particle:trajectory:x0}
					\left\{\begin{aligned}
						&\frac{{\rm d} \phi(\omega,t,x_0)}{{\rm d}t}=\beta(\omega,t)\HH v(\omega,t,\phi(\omega,t,x_0)), \quad t\in [0,\tau^*), \\
						&\phi(\omega,0,x_{0})=x_{0}.
					\end{aligned} \right.
				\end{equation}
				On the other hand, by the transport nature of equation \eqref{Solution:v:girsanov} the value of $v$ is constant along characteristics and since $v(\omega,0,x_0)$ attains a global maximum, we have that
				\begin{equation}\label{maxmium transport}
					\partial_{x}v(\omega,t,\phi(\omega,t,x_0))=0,\ \ t\in [0,\tau^*)\ \ \p-a.s.
				\end{equation}
				Computing the quantity $\Lambda v(\omega,t,\phi(\omega,t,x_0))$, i.e., the evolution of the $\Lambda$ operator of $v$ along the trajectory,  we  see that for a.e. $\omega\in\Omega$ and $t\in [0,\tau^*)$, there holds
				\begin{align}
					&\frac{{\rm d} \Lambda v(\omega,t,\phi(\omega,t,x_0))}{{\rm d}t}\notag\\
					=\,& \Lambda v_{t} (\omega,t,\phi(\omega,t,x_0)) + 
					\frac{{\rm d} \Lambda v (\omega,t,\phi(\omega,t,x_0))}{{\rm d} \phi(\omega,t,x_0))}
					\frac{{\rm d} \phi(\omega,t,x_{0})}{{\rm d}t}  \notag\\
					=\,& -\beta(\omega,t) \left\{\Lambda[(\HH v) v_{x}](\omega,t,\phi(\omega,t,x_0))- (\HH v)(\omega,t,\phi(\omega,t,x_0))\Lambda v_{x}(\omega,t,\phi(\omega,t,x_0)) \right\}, \label{chain:rule:computation}
				\end{align}
				where chain rule is used in the former equality and the fact that $v$ solves \eqref{Solution:v:girsanov} and the particle trajectory equation \eqref{particle:trajectory:x0} in the latter. Let $\tilde{v}= \HH v$. Then we have $v_{x}=\Lambda \tilde{v}$ and $\Lambda v_{x}= -\Tilde{v}_{xx}$ and then we can rewrite equation \eqref{chain:rule:computation} as
				\begin{align}
					&\frac{{\rm d} \Lambda v(\omega,t,\phi(\omega,t,x_0))}{{\rm d}t}\notag\\
					=&-\beta(\omega,t) \left[ \Lambda(\tilde{v}\Lambda\tilde{v})(\omega,t,\phi(\omega,t,x_0))+ \tilde{v}(\omega,t,\phi(\omega,t,x_0))\tilde{v}_{xx}(\omega,t,\phi(\omega,t,x_0))  \right].\label{prepare:ineq:ode}
				\end{align}
				Now we denote $z_0\triangleq\phi(\omega,t,x_0)$ and omit the dependence of $\omega$ and $t$ in \eqref{prepare:ineq:ode} for simplicity if there is no ambiguity.

\subsection{An identity for the fractional Laplacian $\Lambda$}
	\begin{Lemma}\label{identity:SV Lemma}
	For a.e. $\omega\in\Omega$ and $t\in [0,\tau^*)$, there holds the following equation
	\begin{align}\label{identity:SV}
		\Lambda(\tilde{v}\Lambda\tilde{v})(z_0)+ \tilde{v}(z_0)\tilde{v}_{xx}(z_0)  = -\frac{1}{2}(\Lambda v(z_0))^2-\frac{1}{\pi}\norm{\frac{\HH v(z_0)-\HH v(\cdot)}{z_0-\cdot}}^{2}_{\dot{H}^{\frac{1}{2}}}.
	\end{align}	
	\end{Lemma}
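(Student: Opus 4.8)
The plan is to prove \eqref{identity:SV} pathwise, i.e.\ for fixed $\omega$ and $t\in[0,\tau^*)$. By Lemma \ref{WP:gisanov:transformation} the function $v=v(\omega,t,\cdot)$ lies in $H^s$ with $s>3$, so $\tilde v=\HH v\in H^s\hookrightarrow C^2$ decays at infinity and all the singular integrals below converge. The structural input is \eqref{maxmium transport}: since $v_x=\Lambda\tilde v$, $\Lambda v_x=-\tilde v_{xx}$ and $\HH\partial_x=\partial_x\HH$ with $\HH f_x=-\Lambda f$, at $z_0=\phi(\omega,t,x_0)$ we have
\[
\Lambda\tilde v(z_0)=v_x(z_0)=0,\qquad \tilde v_x(z_0)=\HH v_x(z_0)=-\Lambda v(z_0).
\]

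First I would reduce the left-hand side of \eqref{identity:SV} to one bilinear singular integral. From \eqref{frac:integral:rep} with $\alpha=1$ (so $c_1=1/\pi$) one has, for smooth decaying $f,g$, the product identity $\Lambda(fg)=f\Lambda g+g\Lambda f-\mathcal{B}(f,g)$ with
\[
\mathcal{B}(f,g)(x)\triangleq\frac1\pi\,\mathrm{p.v.}\!\int_{\R}\frac{\big(f(x)-f(y)\big)\big(g(x)-g(y)\big)}{(x-y)^2}\,{\rm d}y .
\]
Taking $f=\tilde v$, $g=\Lambda\tilde v$ at $x=z_0$ and using $\Lambda\tilde v(z_0)=0$ together with $\Lambda(\Lambda\tilde v)=\Lambda^2\tilde v=-\tilde v_{xx}$ gives $\Lambda(\tilde v\Lambda\tilde v)(z_0)=-\tilde v(z_0)\tilde v_{xx}(z_0)-\mathcal{B}(\tilde v,\Lambda\tilde v)(z_0)$, so the left-hand side of \eqref{identity:SV} equals $-\mathcal{B}(\tilde v,\Lambda\tilde v)(z_0)$.

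Next I would evaluate $\mathcal{B}(\tilde v,\Lambda\tilde v)(z_0)$. Using $\Lambda\tilde v(z_0)=0$ it equals $-\frac1\pi\,\mathrm{p.v.}\!\int\frac{(\tilde v(z_0)-\tilde v(y))\Lambda\tilde v(y)}{(z_0-y)^2}{\rm d}y$; inserting the representation of $\Lambda\tilde v(y)$ and rewriting everything through $F(y)\triangleq\frac{\HH v(z_0)-\HH v(y)}{z_0-y}$ (note $F(z_0)=\tilde v_x(z_0)=-\Lambda v(z_0)$ and $\tilde v(y)-\tilde v(w)=(z_0-w)F(w)-(z_0-y)F(y)$) turns it into a double integral; the splitting $(z_0-w)F(w)-(z_0-y)F(y)=(z_0-y)(F(w)-F(y))+(y-w)F(w)$ gives $\mathcal{B}(\tilde v,\Lambda\tilde v)(z_0)=\mathcal{B}_1+\mathcal{B}_2$ with
\[
\mathcal{B}_1=-\frac1{\pi^2}\iint\frac{F(y)(F(w)-F(y))}{(y-w)^2}\,{\rm d}w\,{\rm d}y,\qquad
\mathcal{B}_2=-\frac1{\pi^2}\iint\frac{F(y)F(w)}{(z_0-y)(y-w)}\,{\rm d}w\,{\rm d}y .
\]
Symmetrising $\mathcal{B}_1$ in $(y,w)$ yields $\mathcal{B}_1=\frac1{2\pi^2}\iint\frac{(F(y)-F(w))^2}{(y-w)^2}{\rm d}w\,{\rm d}y=\frac1\pi\norm{F}^2_{\dot H^{1/2}}$, using $\norm{F}^2_{\dot H^{1/2}}=(F,\Lambda F)_{L^2}=\frac1{2\pi}\iint\frac{|F(y)-F(w)|^2}{(y-w)^2}{\rm d}w\,{\rm d}y$. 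In $\mathcal{B}_2$ the inner $w$-integral is $\mathrm{p.v.}\!\int\frac{F(w)}{y-w}{\rm d}w=-\pi\HH F(y)$, so $\mathcal{B}_2=-\frac1\pi\,\mathrm{p.v.}\!\int\frac{F(y)\HH F(y)}{y-z_0}{\rm d}y=-\HH(F\,\HH F)(z_0)=\tfrac12\big(F(z_0)^2-(\HH F(z_0))^2\big)$ by Cotlar's identity \eqref{hilbert:iden:2}; since $\HH F(z_0)=-\frac1\pi\,\mathrm{p.v.}\!\int\frac{\tilde v(z_0)-\tilde v(y)}{(z_0-y)^2}{\rm d}y=-\Lambda\tilde v(z_0)=0$ (equivalently, by Lemma \ref{hilbert:iden:3} applied to $\tilde v(\cdot)-\tilde v(z_0)=(\cdot-z_0)F(\cdot)$), this gives $\mathcal{B}_2=\tfrac12(\Lambda v(z_0))^2$. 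Collecting, $-\mathcal{B}(\tilde v,\Lambda\tilde v)(z_0)=-\tfrac12(\Lambda v(z_0))^2-\tfrac1\pi\norm{F}^2_{\dot H^{1/2}}$, which is \eqref{identity:SV}.

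The hard part is analytic rather than algebraic: the decomposition $\mathcal{B}=\mathcal{B}_1+\mathcal{B}_2$, the $w$-integration and the symmetrisations introduce integrands with non-integrable singularities on $y=w$ and at $y=z_0$ which only make sense as iterated principal values, so one must first truncate the kernels at scale $\varepsilon$, perform the manipulations for the truncated integrals, and let $\varepsilon\to0$; it is here that the regularity $v\in H^s$ with $s>3$ and the decay of $\tilde v=\HH v$ enter (to justify convergence, Fubini, and the use of Cotlar's identity and Lemma \ref{hilbert:iden:3}). The genuinely new point compared with \cite[Proposition~3.5]{Silvestre-Vicol-TAMS} is that $z_0=\phi(\omega,t,x_0)$ depends on $(\omega,t)$, so one cannot normalise $\tilde v(z_0)=0$; the terms carrying $\tilde v(z_0)$ — equivalently, that $F$ is $(\tilde v(z_0)-\tilde v(\cdot))/(z_0-\cdot)$ rather than $\tilde v(\cdot)/(\cdot-z_0)$ — must be retained throughout, which is what makes the bookkeeping in the splitting above delicate.
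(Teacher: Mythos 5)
Your proof is correct and follows essentially the same route as the paper's: both reduce the left-hand side to $\frac{1}{\pi}\,\mathrm{p.v.}\!\int(\tilde v(z_0)-\tilde v(y))\Lambda\tilde v(y)(z_0-y)^{-2}\,{\rm d}y$, factor out $F(y)=(\tilde v(z_0)-\tilde v(y))/(z_0-y)$ (the paper's $\eta$), and conclude via Cotlar's identity \eqref{hilbert:iden:2} together with $\Lambda\tilde v(z_0)=0$. The only cosmetic difference is that the paper applies Lemma \ref{hilbert:iden:3} to $\Lambda((z_0-\cdot)\eta)$ pointwise, whereas you expand $\Lambda\tilde v(y)$ into a double integral and recover the same two terms by the splitting and symmetrisation.
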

\begin{proof}
		We remark that \eqref{identity:SV} has been obtained in \cite[Proposition 3.5]{Silvestre-Vicol-TAMS} in the deterministic case. However, notice that one cannot assume without loss of generality (as in the deterministic case) that $\tilde{v}(z_0) = 0$ which simplifies the proof of \eqref{identity:SV} (see Remark \ref{remark blow-up}) and hence we present also the complete proof here.
	
	Recalling \eqref{maxmium transport} and $v_{x}=\Lambda \tilde{v}$, we have $\Lambda \tilde{v}(z_{0})=0$ for $t\in [0,\tau^*)$ almost surely. Then, invoking the integral representation  \eqref{frac:integral:rep} for $\alpha=1$, we  arrive at
	\begin{align}\label{equality:point:1}
		\Lambda(\tilde{v}\Lambda\tilde{v})(z_0)=& \frac{1}{\pi}\mbox{p.v.} \int_{\mathbb{R}}\frac{\tilde{v}(z_0)\Lambda\tilde{v}(z_0)-\tilde{v}(y)\Lambda\tilde{v}(y) }{\abs{z_{0}-y}^{2}}\, {\rm d}y\notag\\
		=& \frac{1}{\pi}\mbox{p.v.} \int_{\mathbb{R}}\frac{-\tilde{v}(y)\Lambda\tilde{v}(y)}{\abs{z_{0}-y}^{2}} \, {\rm d}y,\ \ t\in [0,\tau^*)\ \ \p-a.s.
	\end{align}
	On the other hand, we have that
	\[ \tilde{v}(z_0)\tilde{v}_{xx}(z_0)=-\tilde{v}(z_{0})\Lambda(\Lambda \tilde{v})(z_{0})= \tilde{v}(z_0)\frac{1}{\pi} \mbox{p.v.}\int_{\mathbb{R}}\frac{\Lambda \tilde{v}(y)}{\abs{z_{0}-y}^{2}}\,{\rm d}y, \ \ t\in [0,\tau^*)\ \ \p-a.s., \]
	where we have used in the first equality the fact that $\partial_{xx}=-\Lambda^{2}$ and the semigroup property of the fractional Laplace operator $\Lambda^{2}=\Lambda \Lambda$. Therefore, we have that
	\begin{equation}\label{v-tilde prepare}
		\Lambda(\tilde{v}\Lambda\tilde{v})(z_0)+ \tilde{v}(z_0)\tilde{v}_{xx}(z_0) =\frac{1}{\pi} \mbox{p.v.}\int_{\mathbb{R}} \frac{\left(\tilde{v}(z_0)-\tilde{v}(y)\right)\Lambda \tilde{v}(y)}{\abs{z_{0}-y}^{2}}\,{\rm d}y, \ \ t\in [0,\tau^*)\ \ \p-a.s.
	\end{equation}
	Recalling the notation $\tilde{v}(z_0)=\tilde{v}(\omega,t,z_{0})$, then for a.e. $\omega\in\Omega$ and $t\in [0,\tau^*)$ we define
	\begin{equation}\label{v-bar v-tilde}
		\overline{v}(\omega,t,y)\triangleq\tilde{v}(\omega,t,z_{0})-\tilde{v}(\omega,t,y).
	\end{equation}
	Since $\overline{v}(\omega,t,z_{0})=0$, 
	factorizing the root implies that  there exists a process $\eta(\omega,t,y)$ such that
	\begin{equation}\label{v bar eta}
		\overline{v}(\omega,t,y)=(z_{0}-y)\eta(\omega,t,y), \ \ t\in [0,\tau^*)\ \ \p-a.s.
	\end{equation}  Therefore we can observe that
	\begin{equation}\label{v eta}
		\eta(\omega,t,z_0)=-\overline{v}_{x}(\omega,t,z_0),\ \HH \eta(\omega,t,z_0)=\Lambda\overline{v}(\omega,t,z_0), \ \ t\in [0,\tau^*)\ \ \p-a.s.
	\end{equation}
	Again, we drop $\omega$ and $t$ if there is no ambiguity. 
	Then \eqref{v-tilde prepare} reduces to
	\begin{align*}
		\Lambda(\tilde{v}\Lambda\tilde{v})(z_0)+ \tilde{v}(z_0)\tilde{v}_{xx}(z_0) =&-\frac{1}{\pi} \mbox{p.v.}\int_{\mathbb{R}} \frac{\overline{v}(y)\Lambda \overline{v}(y)}{\abs{z_{0}-y}^{2}}\,{\rm d}y\\
		=&-\frac{1}{\pi}\mbox{p.v.} \int_{\mathbb{R}}
		\frac{(z_{0}-y)\eta(y)[\Lambda((z_{0}-\cdot)\eta(\cdot))](y)}{\abs{z_{0}-y}^{2}}\, {\rm d}y, \ \ t\in [0,\tau^*)\ \ \p-a.s.
	\end{align*}
	Using the linearity of the fractional Laplacian operator and Lemma \ref{hilbert:iden:3}, we have
	$$[\Lambda((z_{0}-\cdot)\eta(\cdot))](y)=z_{0}\Lambda \eta(y)-y\Lambda \eta(y)+\mathcal{H}\eta(y) = (z_{0}-y)\Lambda \eta(y)+\mathcal{H} \eta(y)$$ 
	and hence
	\begin{align*}
		\Lambda(\tilde{v}\Lambda\tilde{v})(z_0)&+\tilde{v}(z_0)\tilde{v}_{xx}(z_0)=\frac{1}{\pi} \mbox{p.v.}\int_{\mathbb{R}}\frac{-(z_{0}-y)\eta(y)\left((z_{0}-y)\Lambda \eta(y)+\mathcal{H} \eta(y) \right)}{\abs{z_{0}-y}^{2}}\, {\rm d}y \notag\\
		&=\frac{1}{\pi}\mbox{p.v.}\int_{\mathbb{R}}\frac{-(z_{0}-y)^2\eta(y)\Lambda \eta(y) \, {\rm d}y}{\abs{z_{0}-y}^{2}} -\frac{1}{\pi} \mbox{p.v.}\int_{\mathbb{R}}\frac{(z_{0}-y)\eta(y)\HH \eta(y) }{\abs{z_{0}-y}^{2}}\, {\rm d}y \notag\\
		&=-\frac{1}{\pi}\mbox{p.v.}\int_{\mathbb{R}}\eta(y)\Lambda \eta(y) \, {\rm d}y + \frac{1}{\pi}\mbox{p.v.}\int_{\mathbb{R}}\frac{\eta(y)\HH \eta(y) }{y-z_{0}}\, {\rm d}y\notag\\
		&= - \frac{1}{\pi}\norm{\Lambda^{1/2}\eta}^2_{L^2}+[\HH(\eta\HH \eta)](z_{0}), \ \ t\in [0,\tau^*)\ \ \p-a.s.
	\end{align*}
	Therefore, applying the identity \eqref{hilbert:iden:2} we can rewrite the above equation as
	\begin{equation*}
		\Lambda(\tilde{v}\Lambda\tilde{v})(z_0)+\tilde{v}(z_0)\tilde{v}_{xx}(z_0)= -\frac{1}{2}(\eta(z_{0}))^2+\frac{1}{2}(\HH \eta(z_{0}))^2-  \frac{1}{\pi}\norm{\Lambda^{1/2}\eta}^2_{L^2}, \ \ t\in [0,\tau^*)\ \ \p-a.s.
	\end{equation*}
	Using \eqref{v-bar v-tilde}, \eqref{v bar eta}, \eqref{v eta} and noticing $\Lambda \tilde{v}(z_{0})=0$, we arrive at
	\begin{align*}
		\Lambda(\tilde{v}\Lambda\tilde{v})(z_0)+\tilde{v}(z_0)\tilde{v}_{xx}(z_0)&=- \frac{1}{2}(\overline{v}_x(z_{0}))^2+\frac{1}{2}(\Lambda \overline{v}(z_{0}))^2-\frac{1}{\pi}\norm{\frac{\overline{v}(\cdot)}{z_{0}-\cdot}}^2_{\dot{H}^{\frac{1}{2}}}\\
		&=-\frac{1}{2}\left(\tilde{v}_x(z_{0})\right)^{2}-\frac{1}{\pi}\norm{\frac{\tilde{v}(z_0)-\tilde{v}(\cdot)}{z_{0}-\cdot}}^2_{\dot{H}^{\frac{1}{2}}}, \ \ t\in [0,\tau^*)\ \ \p-a.s.
	\end{align*}
	Then \eqref{identity:SV} is a direct consequence of the above equation and the fact $\tilde{v}= \HH v$. 
\end{proof}

		\subsection{Proof of Theorem \ref{linear:theorem:blowup}}
		Now we are in the position to prove Theorem \ref{linear:theorem:blowup}. To begin with, 
	we can  infer  from Lemma \ref{identity:SV Lemma} and \eqref{prepare:ineq:ode} that
		\begin{align*}
			\frac{{\rm d} \Lambda v (z_0)}{{\rm d}t} = \beta \left[ \frac{1}{2}(\Lambda v(z_0))^2+\frac{1}{\pi}\norm{\frac{\HH v(z_0)-\HH v(\cdot)}{z_0-\cdot}}^{2}_{\dot{H}^{\frac{1}{2}}} \right] \nonumber \geq \beta  \frac{1}{2}(\Lambda v(z_0))^2,\ \ t\in [0,\tau^*)\ \ \p-a.s.
		\end{align*}
		Let $F(\omega,t)\triangleq\Lambda v(\omega,t,z_0)$. Then the above estimate becomes
		\begin{equation}\label{ODE:2}
			\frac{{\rm d}F(\omega,t)}{{\rm d}t} \geq \frac{1}{2} \beta(\omega,t) F^2(\omega,t),\ \ t\in [0,\tau^*)\ \ \p-a.s.
		\end{equation}
		Let $0<K<1$ and define $\Omega^*\triangleq\{ \omega : \beta(t)\geq K e^{-\frac{b^*}{2}t} \mbox{ for all } t\}.$ If $F(\omega,0) > \frac{b^*}{K}$ almost surely, then $\tau^*<\infty$ for a.e. $\omega\in \Omega^*$. Indeed, integrating  \eqref{ODE:2}  leads to
		\begin{equation*}
			-\frac{1}{F(\omega,t)}+\frac{1}{F(\omega,0)} \geq \frac{1}{2}\int_{0}^{t} \beta(\omega,t') \, {\rm d}t', \ \ t\in [0,\tau^*)\ \ \p-a.s.
		\end{equation*}
		Since $F(\omega,0) > \frac{b^*}{K}$ almost surely, \eqref{ODE:2} means that $F$ is increasing almost surely. Therefore we restrict the above inequality to $\omega\in \Omega^*$ and we arrive at
		$$ \frac{1}{F(\omega,0)}\geq  \frac{1}{2} K  \int_{0}^{\tau^*} e^{-\frac{b^*}{2}t'} dt'= \frac{K}{b^*} \left(1-e^{-\frac{b^*}{2}\tau^*} \right)$$
		and hence
		$$ \frac{1}{F(\omega,0)}-\frac{K}{b^*} \left( 1-e^{-\frac{b^*}{2}\tau^*} \right)\geq0.$$ 
		By the assumption $F(\omega,0)>\frac{b^*}{K}$ almost surely, we arrive at
		$$ \frac{K}{b^*}e^{-\frac{b^*}{2}\tau^*} \geq \frac{K}{b^*}-\frac{1}{F(0)}>0, \quad \mbox{ a.e } \omega\in \Omega^*.$$
		Thus $\tau^*< \infty$ a.e. on $\Omega^*$ as desired. Recalling that  $\beta(\omega,t)={\rm e}^{\int_0^tb(t') {\rm d} W_{t'}-\int_0^t\frac{b^2(t')}{2} {\rm d}t'}
		$, we have shown that 
		$$ \mathbb{P}\{ \tau^* < \infty \} \geq \mathbb{P}\{\beta(t)\geq K e^{-\frac{b^*}{2}t} \mbox{ for all } t \}$$
		which implies that 
		$$ \mathbb{P}\{ \tau^* < \infty \} \geq \mathbb{P}\{{\rm e}^{\int_{0}^{t} b(t') \rm{d}W_{t'}}> K \mbox{ for all } t \}$$
		since $b^2(t)<b^*$ for all $t>0$. The proof is now complete.

			\section{Proof of Theorem \ref{Weak instability}}\label{sect:weak instability}
			In this section, we provide the proof of Theorem \ref{Weak instability}. As is mentioned in Remark \ref{remark on weak instability}, since we cannot  get an explicit expression of the solution to \eqref{SCCF problem}, we start with constructing some approximate solutions from which \eqref{sup sin t} can be established.
			Similarly as before we divide the proof into several subsections.

			\subsection{Approximate solutions and actual solutions}\label{subsec:approximate:weak}
			Following  \cite{Miao-Rohde-Tang-2021-arXiv}, we construct the approximate solution as follows. First, we fix two functions $\phi$, $\tilde{\phi}\in C_c^{\infty}$ such that
			\begin{equation}\label{phi phi-tilde}
				\phi(x)=\left\{\begin{aligned}
					&1,\ \text{if}\ |x|<1,\\
					&0,\ \text{if}\ |x|\ge 2,
				\end{aligned}\right.
				\quad\text{and}\quad \tilde{\phi}(x)=1\ \text{if}\ x\in \text{supp}\ \phi.
			\end{equation}	
			Next, we construct the following sequence of approximate solutions
			\begin{align}\label{approximation solution u-m n}
				u_{m,n}=u_{h}+u_{l},\ \ m\in\{-1,1\},
			\end{align}
			where 
			\begin{itemize}
				\item $u_h = u_{h,m,n}$ is the high-frequency part  defined by
				\begin{align}\label{high-frequency approximation solutions}
					u_h = u_{h,m,n}(t,x)= n^{-\frac{\delta}{2}-s}\phi\left(\frac{x}{n^{\delta}}\right)\cos(nx-mt),\ \ n\in\N.
				\end{align}
				\item $u_l=u_{l,m,n}$ is the low-frequency part  defined as the solution to the following  problem:
				\begin{equation}\label{low-frequency equation}
					\left\{\begin{aligned}
						&\partial_tu_l+(\HH u_l)\partial_xu_l=0,\quad x\in\R, \ t>0,\\
						&u_l(0,x)=-\HH\left(mn^{-1}\tilde{\phi}\left(\frac{x}{n^{\delta}}\right)\right),\quad  x\in\R.
					\end{aligned}
					\right.
				\end{equation}
			\end{itemize}
			In \eqref{high-frequency approximation solutions}-\eqref{low-frequency equation}, $\delta>0$ is a parameter that will be determined later in the proof. 
			
			Let us consider the problem \eqref{SCCF problem} with the initial data $u_{m,n}(0,x)$, i.e.,
			\begin{equation} \label{SCCF m n}
				\left\{
				\begin{aligned}
					&{\rm d}u+(\HH u)u_x{\rm d}t=h(t,u){\rm d}\W,\quad x\in\R, \ t>0,\\
					&u(0,x)=-\HH\left(mn^{-1}\tilde{\phi}\left(\frac{x}{n^{\delta}}\right)\right)+n^{-\frac{\delta}{2}-s}\phi\left(\frac{x}{n^{\delta}}\right)\cos(nx),\quad x\in\R.
				\end{aligned} 
				\right.
			\end{equation}
			Since Assumption \ref{Assumption-2} implies Assumption \ref{Assumption-1}, Theorem \ref{Local pathwise solution} immediately yields that for each fixed $n\in\N$, \eqref{SCCF m n} has a unique pathwise solution $(u^{m,n},\tau^{m,n})$ such that $u^{m,n}\in C\left([0,\tau^{m,n}];H^s\right)$ $\p-a.s.$ with $s>3$.

			\subsection{Estimates on the errors}
			
			Substituting \eqref{approximation solution u-m n} into \eqref{SCCF problem}, we define the error $\EE(\omega,t,x)$ as
			\begin{align*}
				\EE(\omega,t,x)
				=\,   u_{m,n}(t,x)-u_{m,n}(0,x)+ \int_0^t(\HH u_{m,n})\partial_xu_{m,n}\, {\rm d}t'-\int_0^th(t',u_{m,n})\, {\rm d}\W\ \ \p-a.s.
			\end{align*}
			By using \eqref{approximation solution u-m n} and \eqref{low-frequency equation}, we reformulate $\EE(\omega,t,x)$ as
			\begin{align}\label{error EE}
				\EE&(\omega,t,x)\notag\\
				=\,  & u_{l}(t,x)-u_{l}(0,x)+ \int_0^t(\HH u_{l})\partial_xu_{l}\, {\rm d}t'+u_{h}(t,x)-u_{h}(0,x)\notag\\
				&+\int_0^t(\HH u_{l})\partial_xu_{h}+(\HH u_{h})(\partial_xu_{l}+\partial_xu_{h})\, {\rm d}t'-\int_0^th(t',u_{m,n})\, {\rm d}\W\notag\\
				=\,  & u_{h}(t,x)-u_{h}(0,x)
				+\int_0^t(\HH u_{l})\partial_xu_{h}+(\HH u_{h})(\partial_xu_{l}+\partial_xu_{h})\, {\rm d}t'-\int_0^th(t',u_{m,n})\, {\rm d}\W\ \ \p-a.s.
			\end{align}

			The following lemma shows the decay estimate for the low-frequency part of $u_{m,n}$.
			\begin{Lemma}\label{lemma ul Hr}
				Let $|m|=1$, $s>3$, $\delta\in(0,2)$ and $n\gg 1$. Then there exists a $T_l>0$ such that for all $n\gg 1$, the initial value problem \eqref{low-frequency equation} has a unique smooth solution $u_l=u_{l,m,n}\in C([0,T_l];H^s)$ such that $T_l$ does not depend on $n$. Besides, for all fixed $r>0$, there exists a constant $C=C_{r,\tilde{\phi},T_l}>0$ such that $u_l$ satisfies 
				\begin{equation}\label{ul Hr estimate}
					\|u_l(t)\|_{H^r}\leq C|m|n^{\frac{\delta}{2}-1},\ \  t\in[0,T_l].
				\end{equation}
				
			\end{Lemma}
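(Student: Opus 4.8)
The plan is to read \eqref{low-frequency equation} as the \emph{deterministic} CCF equation \eqref{CCF} fed with a small smooth datum, and to combine a scaling estimate for that datum with the local theory already in hand (Theorem \ref{Local pathwise solution} applied with $h\equiv0$) and its blow-up criterion \eqref{Blow-up criterion common}. \textbf{Step 1 (size of the datum).} First I would record that for every $r\ge0$ and all $n$ with $n^\delta\ge1$,
\[
\|u_l(0)\|_{H^r}\le|m|\,n^{-1}\|\tilde\phi(\cdot/n^\delta)\|_{H^r}\le|m|\,n^{\delta/2-1}\|\tilde\phi\|_{H^r},
\]
using $\|\HH g\|_{H^r}\le\|g\|_{H^r}$ (see \eqref{H Je Hs norm}) together with the Plancherel identity $\|\tilde\phi(\cdot/n^\delta)\|_{H^r}^2=n^\delta\int_\R(1+n^{-2\delta}\xi^2)^r|\widehat{\tilde\phi}(\xi)|^2\,{\rm d}\xi\le n^\delta\|\tilde\phi\|_{H^r}^2$. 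Because $\delta<2$ gives $n^{\delta/2-1}\le1$ for $n\gg1$, in particular $\|u_l(0)\|_{H^s}\le\kappa:=\|\tilde\phi\|_{H^s}$ for $n\gg1$, a bound independent of $n$.

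\textbf{Step 2 (existence on an $n$-independent interval).} Next, Theorem \ref{Local pathwise solution} applied with $h\equiv0$ — which satisfies Assumption \ref{Assumption-1} trivially with $f\equiv q\equiv0$ — gives a unique maximal solution $u_l=u_{l,m,n}\in C([0,\tau^*_n);H^s)$ of \eqref{low-frequency equation} obeying the criterion \eqref{Blow-up criterion common}. The $H^s$ energy estimate — via Lemma \ref{Huux Hs inner product}, the embedding $H^s\hookrightarrow\Wlip$ and $\|\HH\,\cdot\|_{H^s}\le\|\cdot\|_{H^s}$ — reads $\frac{{\rm d}}{{\rm d}t}\|u_l\|_{H^s}^2\le C_s\|u_l\|_{H^s}^3$, so an elementary ODE comparison yields
\[
\|u_l(t)\|_{H^s}\le 2\|u_l(0)\|_{H^s}\le 2|m|\,n^{\delta/2-1}\|\tilde\phi\|_{H^s},\qquad t\in[0,T_l],\quad T_l:=(C_s\kappa)^{-1},
\]
and $T_l$ depends only on $\tilde\phi$ and $s$. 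Since on $[0,T_l]$ the quantity $\|\partial_xu_l\|_{L^\infty}+\|\HH\partial_xu_l\|_{L^\infty}\lesssim\|u_l\|_{H^s}$ stays bounded (by $\lesssim\kappa$), the blow-up criterion forces $\tau^*_n>T_l$ for all $n\gg1$; uniqueness is part of Theorem \ref{Local pathwise solution}. Finally, since $u_l(0)=-\HH(mn^{-1}\tilde\phi(\cdot/n^\delta))$ lies in $H^r$ for every $r$, running the same argument at each level $r>3$ (the resulting solutions agree with $u_l$ by uniqueness, and their $\Wlip$ norm is the bounded quantity above) shows $u_l\in C([0,T_l];H^r)$ for all $r>3$, whence $u_l$ is smooth on $[0,T_l]\times\R$.

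\textbf{Step 3 (the $H^r$ bound).} For $0<r\le s$ the estimate \eqref{ul Hr estimate} is immediate from Step 2: $\|u_l(t)\|_{H^r}\le\|u_l(t)\|_{H^s}\le2|m|n^{\delta/2-1}\|\tilde\phi\|_{H^s}$. For $r>s$, the $H^r$ version of the energy inequality,
\[
\frac{{\rm d}}{{\rm d}t}\|u_l\|_{H^r}^2\le C_r\big(\|\partial_xu_l\|_{L^\infty}+\|\HH\partial_xu_l\|_{L^\infty}\big)\|u_l\|_{H^r}^2\le 2C_rC_s'\kappa\,\|u_l\|_{H^r}^2,
\]
together with Grönwall's inequality on $[0,T_l]$ and Step 1, gives $\|u_l(t)\|_{H^r}\le e^{C_rC_s'\kappa T_l}\|u_l(0)\|_{H^r}\le C_{r,\tilde\phi,T_l}\,|m|\,n^{\delta/2-1}$, which is \eqref{ul Hr estimate}.

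I do not expect a deep obstacle here. The two points that need care are: the uniformity of the lifespan $T_l$ in $n$ — which is precisely why the hypothesis $\delta<2$ is imposed, so that $\|u_l(0)\|_{H^s}$ stays bounded (indeed tends to $0$) as $n\to\infty$ — and the fact that no higher-order norm $\|u_l\|_{H^r}$ may blow up while $\|u_{l}\|_{\Wlip}$ stays finite, which is exactly the continuation criterion \eqref{Blow-up criterion common} transplanted to the $H^r$ level.
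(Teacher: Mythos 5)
Your proof is correct and follows essentially the same route as the paper's: the identical scaling computation for $\|u_l(0)\|_{H^r}$, a Riccati-type energy estimate $\frac{{\rm d}}{{\rm d}t}\|u_l\|_{H^r}^2\lesssim\|u_l\|_{H^r}^3$ yielding an $n$-independent lifespan $T_l$ (the paper runs this doubling argument at every level $r>3/2$ and notes $T_{l,m,n}\to\infty$, while you fix $T_l$ once at level $s$), and a linear Gr\"onwall bound for the remaining Sobolev indices. The only difference is the case split --- you treat $0<r\le s$ by monotonicity of Sobolev norms and $r>s$ by Gr\"onwall, whereas the paper splits at $r=3/2$ --- and that is cosmetic.
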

			
			\begin{proof}
				For $|m|=1$ and any fixed $n\ge 1$, since $u_l(0,x)\in H^\infty$, by applying Theorem \ref{Local pathwise solution} with  $h=0$ and deterministic initial data,  we see that for any $s>3$, \eqref{low-frequency equation} has a unique (deterministic) solution $u_l=u_{l,m,n}\in C\left([0,T_l];H^s\right)$. 	
				We will show that there exists a lower bound of the existence time, i.e., there is a $T_l>0$ such that for all $n\gg 1$, $u_l=u_{l,m,n}$ exists on $[0,T_l]$ and satisfies \eqref{ul Hr estimate}. The proof of Lemma \ref{lemma ul Hr} consists of three main steps.
				
				\textit{Step 1: Estimate $\|u_l(0,x)\|_{H^r}$.} Let $g(x)=mn^{-1}\tilde{\phi}\left(\frac{x}{n^{\delta}}\right)$. For $n\gg1$, by using \eqref{H Je Hs norm}, we have that
				\begin{align*}
					\|u_l(0,x)\|^2_{H^r}=\,  \left\|-\HH\left(mn^{-1}\tilde{\phi}\left(\frac{x}{n^{\delta}}\right)\right)\right\|^2_{H^r}
					&\leq \left\|mn^{-1}\tilde{\phi}\left(\frac{x}{n^{\delta}}\right)\right\|^2_{H^r}\\
					&= \int_\R(1+|\xi|^2)^r|\widehat{g}(\xi)|^2\, {\rm d}\xi\\
					&= m^2n^{2\delta-2}
					\int_\R(1+|\xi|^2)^r\left|\widehat{\tilde{\phi}}(n^{\delta}\xi)\right|^2\, {\rm d}\xi\\
					&= m^2n^{\delta-2}\int_\R\left(1+\left|\frac{z}{n^{\delta}}\right|^2\right)^r\left|\widehat{\tilde{\phi}}(z)\right|^2\, {\rm d}z\\
					&\leq  m^2n^{\delta-2}
					\int_\R\left(1+\left|z\right|^2\right)^r\left|\widehat{\tilde{\phi}}(z)\right|^2\, {\rm d}z
					\leq C m^2n^{\delta-2},
				\end{align*}
				for some constant $C=C_{r,\tilde{\phi}}>0$. Therefore we find that
				\begin{align*}
					\|u_l(0,x)\|_{H^r}
					\leq C |m|n^{\frac{\delta}{2}-1}.
				\end{align*}
				
				\textit{Step 2: Proof  of \eqref{ul Hr estimate} for $r>3/2$.} 
				In this case, we apply Lemma \ref{Kato-Ponce commutator estimate}, \eqref{H Je Hs norm}, $H^r\hookrightarrow 
				\Wlip$ and integration by parts to find 
				\begin{align*}
					\frac{1}{2}&\frac{\, {\rm d}}{\, {\rm d}t}\|u_l\|^2_{H^r}\notag\\
					=\,  & -\int_\R D^{r}u_lD^{r}\left((\HH u_l)\partial_xu_l\right)\, {\rm d}x\\
					\leq  \, &    \left|\left(D^{r}u_l,D^{r}\left((\HH u_l)\partial_xu_l\right)\right)_{L^2}\right|\\
					\leq  \, &    \left|\left([D^{r},\HH u_l]\partial_xu_l,D^{r}u_l\right)_{L^2}\right|+\left|\left((\HH u_l)D^{r}\partial_xu_l,D^{r}u_l\right)_{L^2}\right|\\
					\lesssim \, &  \left(\|D^{r}(\HH u_l)\|_{L^2}\|\partial_xu_l\|_{L^{\infty}}
					+\|\partial_x\left(\HH u_l\right)\|_{L^{\infty}}\|D^{r-1}\partial_x u_l\|_{L^2}\right)\|u_l\|_{H^r}
					+\left|\frac12\int_\R \left(\HH u_l\right)\partial_x(D^ru_l)^2\, {\rm d}x\right|\\
					\lesssim \, &  \|\partial_xu_l\|_{L^{\infty}}\|u_l\|_{H^r}^2
					+\|\partial_x\left(\HH u_l\right)\|_{L^{\infty}}\|u_l\|_{H^r}^2\\
					\lesssim \, &  \|u_l\|_{\Wlip}\|u_l\|_{H^r}^2
					+\|\HH u_l\|_{\Wlip}\|u_l\|_{H^r}^2\\
					\leq  \, &     C\|u_l\|^{3}_{H^r},\ \ C=C_r>0.
				\end{align*}
				Solving the above inequality gives
				\begin{equation*}
					\|u_l\|_{H^r}\leq \frac{\|u_l(0)\|_{H^r}}{1-Ct\|u_l(0)\|_{H^r}},\ \ 
					0\le t<\frac{1}{C\|u_l(0)\|_{H^r}}.
				\end{equation*}
			Remember that $u_l=u_{l,m,n}$. Then we define the time interval $[0,T_{l,m,n}]$ such that
				\begin{equation}
					\|u_l\|_{H^r}\leq 2 \|u_l(0)\|_{H^r},\ \ t\in[0,T_{l,m,n}],\ \ T_{l,m,n}=\frac{1}{2C\|u_l(0)\|_{H^r}}.\label{solution bound T u_0}
				\end{equation}
				By \textit{Step 1}, we have that for $|m|=1$, 
				$T_{l,m,n}\gtrsim \frac{1}{2Cn^{\frac{\delta}{2}-1}}\rightarrow\infty,\ \text{as}\ n\rightarrow\infty.
				$
				Therefore we can find a common time interval $[0,T_{l}]$ such that 
				\begin{equation}\label{u-l r>3/2}
					\|u_l\|_{H^r}\leq 2 \|u_l(0)\|_{H^r}\leq C|m|n^{\frac{\delta}{2}-1},\ \ t\in[0,T_l],\ \ C=C_{r,\tilde{\phi}}>0,
				\end{equation}
				which is \eqref{ul Hr estimate}.
				
				\textit{Step 3: Proof of \eqref{ul Hr estimate} for $0<r\leq 3/2$.} Applying Lemma \ref{Kato-Ponce commutator estimate}, \eqref{H Je Hs norm}, we have
				\begin{align*}
					&\frac{1}{2}\frac{\, {\rm d}}{\, {\rm d}t}\|u_l\|^2_{H^r}\\
					=\,  & -\int_\R D^{r}u_lD^{r}\left((\HH u_l)\partial_xu_l\right)\, {\rm d}x\\
					\leq  \, &    \left|\left(D^{r}u_l,D^{r}\left((\HH u_l)\partial_xu_l\right)\right)_{L^2}\right|\\
					\leq  \, &    \left|\left([D^{r},\HH u_l]\partial_xu_l,D^{r}u_l\right)_{L^2}\right|+\left|\left((\HH u_l)D^{r}\partial_xu_l,D^{r}u_l\right)_{L^2}\right|\\
					\lesssim \, &  \left(\|D^{r}(\HH u_l)\|_{L^2}\|\partial_xu_l\|_{L^{\infty}}
					+\|\partial_x\left(\HH u_l\right)\|_{L^{\infty}}\|D^{r-1}\partial_x u_l\|_{L^2}\right)\|u_l\|_{H^r}
					+\left|\frac12\int_\R \left(\HH u_l\right)\partial_x(D^ru_l)^2\, {\rm d}x\right|\\
					\lesssim \, &  \|\partial_xu_l\|_{L^{\infty}}\|u_l\|_{H^r}^2
					+\|\partial_x\left(\HH u_l\right)\|_{L^{\infty}}\|u_l\|_{H^r}^2.
				\end{align*}
				It follows from the embedding $H^{r+\frac32}\hookrightarrow W^{1,\infty}$ that
				\begin{align*}
					\frac{1}{2}\frac{\, {\rm d}}{\, {\rm d}t}\|u_l\|^2_{H^r}	\lesssim \, &  \|\partial_xu_l\|_{L^{\infty}}\|u_l\|_{H^r}^2
					+\|\partial_x\left(\HH u_l\right)\|_{L^{\infty}}\|u_l\|_{H^r}^2\\
					\lesssim \, &  \|u_l\|_{\Wlip}\|u_l\|^2_{H^r}
					+\|\HH u_l\|_{\Wlip}\left\|u_l\right\|_{H^r}^2\\
					\lesssim \, &  	\|u_l\|_{H^{r+\frac{3}{2}}}\|u_l\|_{H^{r}}^2+\|\HH u_l\|_{H^{r+\frac{3}{2}}}\|u_l\|_{H^{r}}^2\\
					\lesssim \, &  	\|u_l\|_{H^{r+\frac{3}{2}}}\|u_l\|_{H^{r}}^2.
				\end{align*}
				Using the conclusion of \textit{Step 2} for $r+\frac{3}{2}>\frac{3}{2}$, we have
				\begin{align*}
					\frac{\, {\rm d}}{\, {\rm d}t}\|u_l\|_{H^r}\lesssim\|u_l\|_{H^{r}}\|u_l(0)\|_{H^{r+\frac{3}{2}}},\ \ t\in[0,T_l],
				\end{align*}
				and hence
				\begin{align*}
					\|u_l(t)\|_{H^r}\lesssim \|u_l(0)\|_{H^r}+\int_0^t\|u_l\|_{H^{r}}\|u_l(0)\|_{H^{r+\frac{3}{2}}}\, {\rm d}t,\ \ t\in[0,T_l].
				\end{align*}
				Applying Gr\"{o}nwall's inequality to the above inequality, we have
				\begin{align*}
					\|u_l\|_{H^r}\lesssim\|u_l(0)\|_{H^r}\exp\left\{\|u_l(0)\|_{H^{r+\frac{3}{2}}}T_l\right\},\ \ t\in[0,T_l].
				\end{align*}
				Since $\delta\in(0,2)$, we can infer from \textit{Step 1} that $\exp\left\{\|u_l(0)\|_{H^{r+\frac{3}{2}}}T_l\right\} <C(r,\tilde{\phi},T_l)$ for some constant $C(r,\tilde{\phi},T_l)>0$. 
				Therefore we see that there exists a constant $C=C_{r,\tilde{\phi},T_l}>0$ such that
				\begin{align*}
					\|u_l\|_{H^r}\leq C|m|n^{\frac{\delta}{2}-1},\ \ t\in[0,T_l],
				\end{align*}
				concluding the desired estimate \eqref{ul Hr estimate}. 
			\end{proof}
			
			The above result implies that the $H^s$-norm of $u_l$, the low-frequency part of the approximate solution defined by \eqref{approximation solution u-m n}, is decaying. For the high-frequency part $u_h$, due to Lemma \ref{lemma uh Hr}, its $H^s$-norm  is bounded. To sum up, let $T_l$ be given in Lemma \ref{lemma ul Hr}, for any $s>0$, there is a constant  $M=M_{s,\tilde{\phi},\phi,T_l}>0$ such that
			\begin{equation}\label{u-m n Hs}
				\|u_{m,n}(t)\|_{H^s}\lesssim M,\ \ t\in[0,T_l].
			\end{equation}
		Moreover, although not strictly necessary, we can infer from \eqref{u-l r>3/2} that $M$ can be independent of $T_l$ when $s>3/2$.
			
			\subsubsection{Estimating the error $\EE$}
			Recall \eqref{error EE}. By using \eqref{phi phi-tilde}, we have that $\phi=\tilde{\phi}\phi$. Then by \eqref{high-frequency approximation solutions} and $u_l(0,x)$ in \eqref{low-frequency equation}, we see that for $m\in\{-1,1\}$, 
			\begin{align*}
				u_{h}(t,x)-&u_{h}(0,x)\notag\\
				=\,  & n^{-\frac{\delta}{2}-s}\phi\left(\frac{x}{n^{\delta}}\right)\cos(nx-mt)-n^{-\frac{\delta}{2}-s}\phi\left(\frac{x}{n^{\delta}}\right)\cos(nx)\\
				=\,  & m^{-1}m\tilde{\phi}\left(\frac{x}{n^{\delta}}\right)n^{-\frac{\delta}{2}-s}\phi\left(\frac{x}{n^{\delta}}\right)\cos(nx-mt)-m^{-1}m
				\tilde{\phi}\left(\frac{x}{n^{\delta}}\right)
				n^{-\frac{\delta}{2}-s}\phi\left(\frac{x}{n^{\delta}}\right)\cos(nx)\notag\\
				=\,  & m^{-1}\left(\HH u_l(0,x)\right)n^{1-\frac{\delta}{2}-s}\phi\left(\frac{x}{n^{\delta}}\right)\cos(nx-mt)-m^{-1}\left(\HH u_l(0,x)\right)n^{1-\frac{\delta}{2}-s}\phi\left(\frac{x}{n^{\delta}}\right)\cos(nx)\\
				=\,  & \int_0^t\left(\HH u_l(0,x)\right)n^{1-\frac{\delta}{2}-s}\phi\left(\frac{x}{n^{\delta}}\right)\sin(nx-mt')\, {\rm d}t'.
			\end{align*}
			Furthermore,
			\begin{align}
				\int_0^t(\HH u_{l})\partial_xu_h\, {\rm d}t'=\, & -\int_0^t(\HH u_{l})(t')n^{1-\frac{\delta}{2}-s}\phi\left(\frac{x}{n^{\delta}}\right)\sin(nx-mt')\, {\rm d}t'\notag\\
				&+\int_0^t(\HH u_{l})(t')n^{-\frac{3\delta}{2}-s}\partial_x\phi\left(\frac{x}{n^{\delta}}\right)\cos(nx-mt')\, {\rm d}t'\label{ul uh_x}.
			\end{align}	
			Thus, \eqref{error EE} becomes
			\begin{align}
				\EE(\omega,t,x)
				=\,  & \int_0^t[(\HH u_{l})(0)-(\HH u_{l})(t')]n^{1-\frac{\delta}{2}-s}\phi\left(\frac{x}{n^{\delta}}\right)\sin(nx-mt')\, {\rm d}t'\notag\\
				&+\int_0^t(\HH u_{l})(t')n^{-\frac{3\delta}{2}-s}\partial_x\phi\left(\frac{x}{n^{\delta}}\right)\cos(nx-mt')\, {\rm d}t'\notag\\
				&+\int_0^t(\HH u_{h})(\partial_xu_l+\partial_xu_h)\, {\rm d}t'-\int_0^th(t',u_{m,n})\, {\rm d}\W\notag\\
				=\,  & \int_0^tE \, {\rm d}t'-\int_0^th(t',u_{m,n})\, {\rm d}\W\ \ \p-a.s.,\label{Error}
			\end{align}	
			where
			\begin{align}
				E=\, E(\omega,t,x)=\,& [(\HH u_{l})(0)-(\HH u_{l})(t)]n^{1-\frac{\delta}{2}-s}\phi\left(\frac{x}{n^{\delta}}\right)\sin(nx-mt)\notag\\
				&+(\HH u_{l})(t)n^{-\frac{3\delta}{2}-s}\partial_x\phi\left(\frac{x}{n^{\delta}}\right)\cos(nx-mt)
				+(\HH u_{h})(\partial_xu_l+\partial_xu_h).\label{Error E}
			\end{align}

			Now we shall estimate the $H^{\sigma_0}$-norm of the error $\EE$, where $\sigma_0$ is given in Assumption \ref{Assumption-2}. Actually, we will show that the $H^{\sigma_0}$-norm of $\EE$ is decaying.
			
			\begin{Lemma}\label{Lemma Error estimate}
				Let $n\gg 1$, $s>3$, $\frac34<\delta<1$. Let $T_l$ be given in Lemma \ref{lemma ul Hr}, and $\sigma_0$ be given in Assumption \ref{Assumption-2}. Let
				\begin{equation}\label{rs}
				r_s=-s-1+\sigma_0+\delta.
				\end{equation}
				Then $r_s<0$ and the error $\EE$ given by \eqref{Error} satisfies
				\begin{align*}
					\E\sup_{t\in[0,T_l]}\|\EE(t)\|_{H^{\sigma_0}}^2\leq Cn^{2r_s},
				\end{align*}	
			where $C=C(\sigma_0,\tilde{\phi},\phi,T_l)>0$ is a constant independent of $n$. 
			\end{Lemma}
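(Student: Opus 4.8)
The plan is to split $\EE$ in \eqref{Error} into its pathwise ``drift'' part $\int_0^tE\,{\rm d}t'$, with $E$ as in \eqref{Error E}, and its stochastic part $\int_0^th(t',u_{m,n})\,{\rm d}\W$, and to show that the first has size $n^{r_s}$ in $C([0,T_l];H^{\sigma_0})$ while the second is super-polynomially small in $n$. The two workhorses I would use are Lemma \ref{lemma ul Hr}, giving $\|u_l(t)\|_{H^{r}}\lesssim n^{\frac{\delta}{2}-1}$ on $[0,T_l]$ for every fixed $r>0$, and Lemma \ref{lemma uh Hr} in the quantitative form $\|\psi(x/n^{\delta})\cos(nx-\alpha)\|_{H^{r}}\lesssim n^{\frac{\delta}{2}+r}$ for $n\gg1$ (uniformly in $\alpha$, e.g.\ via $\cos(nx-mt)=\cos(mt)\cos(nx)+\sin(mt)\sin(nx)$), which in particular yields $\|u_h(t)\|_{H^{r}}\lesssim n^{r-s}$ for each fixed $r$. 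Since $\sigma_0>3/2>1/2$, $H^{\sigma_0}$ is a Banach algebra, which I would invoke for products. Note also that $r_s=-s-1+\sigma_0+\delta<0$ since $-s-1<-4$ and $\sigma_0+\delta<11/4$.

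For the drift I would bound $\|E(t)\|_{H^{\sigma_0}}$ term by term in \eqref{Error E}. For the difference $(\HH u_l)(0)-(\HH u_l)(t)$ I would use $\partial_tu_l=-(\HH u_l)\partial_xu_l$ and the algebra property to get $\|(\HH u_l)(0)-(\HH u_l)(t)\|_{H^{\sigma_0}}\le\int_0^t\|\partial_tu_l\|_{H^{\sigma_0}}\,{\rm d}t'\lesssim T_l\|u_l\|_{H^{\sigma_0}}\|u_l\|_{H^{\sigma_0+1}}\lesssim n^{\delta-2}$; multiplying by $n^{1-\frac{\delta}{2}-s}\phi(x/n^{\delta})\sin(nx-mt)$, whose $H^{\sigma_0}$-norm is $\lesssim n^{1-\frac{\delta}{2}-s}n^{\frac{\delta}{2}+\sigma_0}=n^{1-s+\sigma_0}$, gives the first term of size exactly $n^{\delta-1-s+\sigma_0}=n^{r_s}$. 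The second term of \eqref{Error E} is $\lesssim\|u_l\|_{H^{\sigma_0}}\,n^{-\frac{3\delta}{2}-s}n^{\frac{\delta}{2}+\sigma_0}\lesssim n^{-\frac{\delta}{2}-1-s+\sigma_0}\le n^{r_s}$, and for the last term $\|(\HH u_h)\partial_xu_l\|_{H^{\sigma_0}}\lesssim n^{\sigma_0-s}n^{\frac{\delta}{2}-1}\le n^{r_s}$ while $\|(\HH u_h)\partial_xu_h\|_{H^{\sigma_0}}\lesssim n^{\sigma_0-s}n^{\sigma_0+1-s}=n^{2\sigma_0+1-2s}$. This last exponent is $\le r_s$ precisely when $\delta\ge\sigma_0+2-s$, which holds because $\sigma_0<7/4$ and $s>3$ force $\sigma_0+2-s<3/4<\delta$; this is exactly where the windows $\sigma_0\in(3/2,7/4)$ and $\delta\in(3/4,1)$ enter. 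Summing, $\sup_{t\in[0,T_l]}\left\|\int_0^tE\,{\rm d}t'\right\|_{H^{\sigma_0}}\le T_l\sup_{t\in[0,T_l]}\|E(t)\|_{H^{\sigma_0}}\lesssim n^{r_s}$, a deterministic bound.

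For the stochastic part I would apply the Burkholder--Davis--Gundy inequality to obtain $\E\sup_{t\in[0,T_l]}\left\|\int_0^th(t',u_{m,n})\,{\rm d}\W\right\|_{H^{\sigma_0}}^2\lesssim\int_0^{T_l}\|h(t',u_{m,n}(t'))\|_{\LL_2(\U;H^{\sigma_0})}^2\,{\rm d}t'$, the integrand being deterministic because $u_l$ and $u_h$ are deterministic on $[0,T_l]$. Writing $\rho_n:=\sup_{t\in[0,T_l]}\|u_{m,n}(t)\|_{H^{\sigma_0}}\lesssim n^{-a}$ with $a:=\min\{s-\sigma_0,\,1-\frac{\delta}{2}\}>0$, and using $\|u_{m,n}(t)\|_{H^s}\lesssim M$ from \eqref{u-m n Hs}, the second bound of Assumption \ref{Assumption-2} gives $\|h(t',u_{m,n})\|_{\LL_2(\U;H^{\sigma_0})}\le g(M)\,{\rm e}^{-1/\|u_{m,n}\|_{H^{\sigma_0}}}\le g(M)\,{\rm e}^{-cn^{a}}$ for some $c>0$, which is smaller than any power of $n$; hence this contribution is $\lesssim{\rm e}^{-2cn^{a}}\le n^{2r_s}$ for $n$ large. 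Combining with the drift estimate and $\|\EE\|_{H^{\sigma_0}}^2\le 2\left\|\int_0^tE\,{\rm d}t'\right\|_{H^{\sigma_0}}^2+2\left\|\int_0^th(t',u_{m,n})\,{\rm d}\W\right\|_{H^{\sigma_0}}^2$ yields $\E\sup_{t\in[0,T_l]}\|\EE(t)\|_{H^{\sigma_0}}^2\lesssim n^{2r_s}$ with a constant depending only on $\sigma_0,\tilde\phi,\phi,T_l$. The main obstacle is the bookkeeping of the powers of $n$ in the drift: one has to verify that each of the four contributions in \eqref{Error E} carries an exponent no larger than $r_s$, and it is exactly the parameter ranges $s>3$, $\sigma_0\in(3/2,7/4)$, $\delta\in(3/4,1)$ that make this possible; the stochastic term, by contrast, is rendered negligible purely by the exponential smallness built into Assumption \ref{Assumption-2}.
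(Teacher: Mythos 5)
Your proof is correct and follows the paper's argument almost verbatim: the same term-by-term estimation of $E$ in \eqref{Error E} with identical exponent bookkeeping (including the observation that $\sigma_0\in(3/2,7/4)$ and $\delta>3/4$ are exactly what make the $(\HH u_h)\partial_x u_h$ term fit under $n^{r_s}$), the same use of Lemmas \ref{lemma ul Hr} and \ref{lemma uh Hr}, and the same exploitation of the exponential bound in Assumption \ref{Assumption-2} to render the noise contribution negligible. The only difference is in the final assembly: the paper applies It\^{o}'s formula to $\|\EE\|^2_{H^{\sigma_0}}$ and closes with Gr\"{o}nwall's inequality, whereas you bound the deterministic drift $\int_0^t E\,{\rm d}t'$ directly and apply the BDG inequality only to the martingale part --- a valid and slightly more economical route, since $u_{m,n}$ is deterministic and no self-referential term needs to be absorbed.
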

			
			\begin{proof}
				It is obvious by construction that $r_s$ given by \eqref{rs} is negative. 
				Combining \eqref{Error E}, the embedding $H^{\sigma_0}\hookrightarrow L^\infty$ and Lemmas \ref{lemma ul Hr} and \ref{lemma uh Hr}, we find that for $t\in[0,T_l]$,
				\begin{align}
					\|E\|_{H^{\sigma_0}}
					\leq \,	&
					\left\|\left[(\HH u_{l})(0)-(\HH u_{l})(t)\right]n^{1-\frac{\delta}{2}-s}\phi\left(\frac{x}{n^{\delta}}\right)\sin(nx-mt)\right\|_{H^{\sigma_0}}\notag\\
					&+\left\|(\HH u_{l})(t)n^{-\frac{3\delta}{2}-s}\partial_x\phi\left(\frac{x}{n^{\delta}}\right)\cos(nx-mt)\right\|_{H^{\sigma_0}}\notag\\
					&+\left\|(\HH u_{h})\partial_xu_l\right\|_{H^{\sigma_0}}+\left\|(\HH u_{h})\partial_xu_h\right\|_{H^{\sigma_0}}\notag\\
					\lesssim \, &   n^{1-\frac{\delta}{2}-s}\left\|(\HH u_{l})(0)-(\HH u_{l})(t)\right\|_{H^{\sigma_0}}\left\|\phi\left(\frac{x}{n^{\delta}}\right)\sin(nx-mt)\right\|_{H^{\sigma_0}}\notag\\
					&+n^{-\frac{3\delta}{2}-s}\left\|(\HH u_{l})(t)\right\|_{H^{\sigma_0}}
					\left\|\partial_x\phi\left(\frac{x}{n^{\delta}}\right)\cos(nx-mt')\right\|_{H^{\sigma_0}}\notag\\		
					&+\left\|(\HH u_{h})\partial_xu_l\right\|_{H^{\sigma_0}}+\left\|(\HH u_{h})\partial_xu_h\right\|_{H^{\sigma_0}}\notag\\
					\lesssim \, &  n^{1-s+\sigma_0}\left\|(\HH u_{l})(0)-(\HH u_{l})(t)\right\|_{H^{\sigma_0}}
					+n^{-s-1+\sigma_0-\frac{1}{2}\delta}
					+\left\|(\HH u_{h})\partial_xu_l\right\|_{H^{\sigma_0}}+\left\|(\HH u_{h})\partial_xu_h\right\|_{H^{\sigma_0}}\notag\\
					\lesssim \, &  n^{1-s+\sigma_0}\left\|(\HH u_{l})(0)-(\HH u_{l})(t)\right\|_{H^{\sigma_0}}
					+n^{r_s}
					+\left\|(\HH u_{h})\partial_xu_l\right\|_{H^{\sigma_0}}+\left\|(\HH u_{h})\partial_xu_h\right\|_{H^{\sigma_0}}.\label{Error E estimate}
				\end{align}
				For $\left\|(\HH u_{l})(0)-(\HH u_{l})(t)\right\|_{H^{\sigma_0}}$, it follows from the fundamental theorem of calculus and $H^{\sigma_0}\hookrightarrow L^\infty$  that for $t\in[0,T_l]$,
				\begin{align*}
					\left\|(\HH u_{l})(0)-(\HH u_{l})(t)\right\|_{H^{\sigma_0}}
					\lesssim \, &  \|u_l(0)-u_l(t)\|_{H^{\sigma_0}}\\
					=\, &\left\|\int_0^t\partial_tu_l(t')\, {\rm d}t'\right\|_{H^{\sigma_0}}\\
					\lesssim \,& \int_0^t\|(\HH u_{l})\partial_xu_l\|_{H^{\sigma_0}}\, {\rm d}t'\\
					\lesssim \, &  \int_0^t	\|u_l\|_{H^{\sigma_0+1}}^{2}\, {\rm d}t'
					\lesssim \, n^{\delta-2}T_l,
				\end{align*}
				where we used \eqref{low-frequency equation} with $t\in[0,T_l]$, Lemma \ref{lemma ul Hr} and the embedding $H^{\sigma_0+1}\hookrightarrow \Wlip$. Therefore, 
				\begin{align}
					n^{1-s+\sigma_0}\left\|(\HH u_{l})(0)-(\HH u_{l})(t)\right\|_{H^{\sigma_0}}\lesssim n^{1-s+\sigma_0+\delta-2}T_l=n^{r_s}T_l,\ \ t\in[0,T_l].\label{E part 1}
				\end{align}
				Next, applying Lemma \ref{lemma ul Hr} and \ref{lemma uh Hr}, we have for $t\in[0,T_l]$,
				\begin{align}
					\left\|(\HH u_{h})\partial_xu_l\right\|_{H^{\sigma_0}}
					\lesssim \, &  \|u_{h}\|_{H^{\sigma_0}}\|u_l\|_{H^{\sigma_0+1}}\notag\\
					\lesssim \, &  n^{-s+\sigma_0}n^{\frac\delta2-1}
					=\, n^{-s+\sigma_0+\frac\delta2-1}	\lesssim n^{r_s},\label{E part 3}\\
					\left\|(\HH u_{h})\partial_xu_h\right\|_{H^{\sigma_0}}
					\lesssim \, &    \|u_{h}\|_{H^{\sigma_0}}\|u_h\|_{H^{\sigma_0+1}}\notag\\
					\lesssim \, &  n^{-s+\sigma_0}n^{-s+\sigma_0+1}
					=\, n^{-2s+2\sigma_0+1}	\lesssim n^{r_s}. \label{E part 4}
				\end{align}
				Here in \eqref{E part 4} we used the assumption $\sigma_0\in(3/2,7/4)$ to guarantee $n^{-2s+2\sigma_0+1}	\lesssim n^{r_s}$. Inserting \eqref{E part 1}, \eqref{E part 3} and \eqref{E part 4} into \eqref{Error E estimate}, we finally obtain
				\begin{align}\label{error E estimate}
					\|E\|_{H^{\sigma_0}}\lesssim n^{r_s}, \ \ t\in[0,T_l].
				\end{align}
With \eqref{error E estimate} at hand, we are in the position to estimate $	\E\sup_{t\in[0,T_l]}\|\EE(t)\|_{H^{\sigma_0}}^2$.  Invoking It\^{o} formula in \eqref{Error} leads to
				\begin{align*}
					\|\EE(t,x)\|_{H^{\sigma_0}}^2\leq \  & \left|-2\int_0^t(h(t',u_{m,n})\, {\rm d}\W,\EE)_{H^{\sigma_0}}\right|+2\int_0^t\left|(E,\EE)_{H^{\sigma_0}}\right|\, {\rm d}t'
					+\int_0^t\|h(t',u_{m,n})\|_{\LL_2(\U;H^{\sigma_0})}^2\, {\rm d}t'.
				\end{align*}
				Taking supremum with respect to $t\in[0,T_l]$ and using the BDG inequality, we can find some $\overline{C}>0$ such that
				\begin{align*}
					&\E\sup_{t\in[0,T_l]}\|\EE(t)\|_{H^{\sigma_0}}^2\\
					\leq \,&\frac 12\E\sup_{t\in[0,T_l]}\|\EE(t)\|_{H^{\sigma_0}}^2
					+\overline{C}\E\int_0^{T_l}\|h(t,u_{m,n})\|_{\LL_2(\U;H^{\sigma_0})}^2\, {\rm d}t
					+\overline{C}\int_0^{T_l}\left[\E\|E\|_{H^{\sigma_0}}^2+\E\|\EE(t)\|_{H^{\sigma_0}}^2\right]\, {\rm d}t.
				\end{align*}
				By virtue of \eqref{error E estimate}, we arrive at
				\begin{align*}
					\E\sup_{t\in[0,T_l]}\|\EE(t)\|_{H^{\sigma_0}}^2
					\lesssim T_l n^{2r_s}
					+\E\int_0^{T_l}\|h(t,u_{m,n})\|_{\LL_2(\U;H^{\sigma_0})}^2\, {\rm d}t
					+\int_0^{T_l}\E\sup_{t'\in[0,t]}\|\EE(t')\|_{H^{\sigma_0}}^2\, {\rm d}t.
				\end{align*}
				Now, we estimate $\|h(t,u_{m,n})\|_{\LL_2(\U;H^{\sigma_0})}$. For any fixed $s>3$, on account of Assumption \ref{Assumption-2}, Lemmas \ref{lemma uh Hr} and \ref{lemma ul Hr}, we can pick $\kappa> 2(1+\frac{s-\sigma_0-1}{2-\delta})$ such that
				\begin{align*}
					\|h(t,u_{m,n})\|_{\LL_2(\U;H^{\sigma_0})}^2
					\lesssim \left({\rm e}^\frac{-1}{\|u_{m,n}\|_{H^{\sigma_0}}}\right)^2 
					\lesssim  \left(n^{-s+\sigma_0}+n^{\frac{\delta}{2}-1}\right)^{2\kappa}
					\lesssim  n^{2r_s},
				\end{align*}
which gives
				\begin{align*}
					\E\sup_{t\in[0,T_l]}\|\EE(t)\|_{H^{\sigma_0}}^2
					\lesssim T_l n^{2r_s}+\int_0^{T_l}\E\sup_{t'\in[0,t]}\|\EE(t')\|_{H^{\sigma_0}}^2\, {\rm d}t.
				\end{align*}
				Obviously, for each $n\geq 1$, $\E\sup_{t\in[0,T_l]}\|\EE(t)\|_{H^{\sigma_0}}^2$ is finite and $T_l>0$ is fixed. Then by the Gr\"{o}nwall inequality, we have
				\begin{align*}
					\E\sup_{t\in[0,T_l]}\|\EE(t)\|_{H^{\sigma_0}}^2\leq Cn^{2r_s}.
				\end{align*}
				The proof is completed.
			\end{proof}
			
			\subsubsection{Estimating $u_{m,n}-u^{m,n}$}
			
			Recall the approximate solutions $u_{m,n}$ given by \eqref{approximation solution u-m n}. Then we have the following estimates on the difference between the actual solutions and the approximate solutions.
			\begin{Lemma}\label{difference estimate lemma}
				Let $s>3$, $\frac34<\delta<1$, $\sigma_0$ be given in Assumption \ref{Assumption-2} and $r_s<0$ be given in \eqref{rs}. For any $R>1$,  we define
				\begin{align}\label{tau actual solution mnR}
					\tau^{m,n}_R:=\inf\{t>0:\|u^{m,n}\|_{H^{s}}>R\}.
				\end{align}
				Then for $n\gg 1$ and $T_l>0$ given in Lemma \ref{lemma ul Hr},  
				\begin{align}
					&\E\sup_{t\in[0,T_l\wedge\tau^{m,n}_R]}\|u_{m,n}-u^{m,n}\|_{H^{\sigma_0}}^2 \leq Cn^{2r_s},\label{Error sigma}
				\end{align}
				\begin{align}
					&\E\sup_{t\in[0,T_l\wedge\tau^{m,n}_R]}\|u_{m,n}-u^{m,n}\|_{H^{2s-\sigma_0}}^2 \leq Cn^{2s-2\sigma_0},\label{Error 2s minues sigma}
				\end{align}
				where $C=C(s,\sigma_0,\tilde{\phi},\phi,T_l,R)>0$ is a constant independent of $n$.	
			\end{Lemma}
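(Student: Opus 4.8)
Write $\sigma_1:=2s-\sigma_0>3$ and $w:=u_{m,n}-u^{m,n}$. Since the approximate solution $u_{m,n}=u_h+u_l$ carries no noise and, by its construction together with \eqref{low-frequency equation}, satisfies $u_{m,n}(t)=u_{m,n}(0)-\int_0^t(\HH u_{m,n})\partial_xu_{m,n}\,{\rm d}t'+\int_0^tE\,{\rm d}t'$ with $E$ as in \eqref{Error E}, subtracting this from \eqref{SCCF m n} and using the identity $(\HH u_{m,n})\partial_xu_{m,n}-(\HH u^{m,n})\partial_xu^{m,n}=(\HH w)\partial_xu_{m,n}+(\HH u^{m,n})\partial_xw$ yields
\begin{equation*}
	{\rm d}w+\big[(\HH w)\partial_xu_{m,n}+(\HH u^{m,n})\partial_xw\big]\,{\rm d}t=E\,{\rm d}t-h(t,u^{m,n})\,{\rm d}\W,\qquad w(0)=0.
\end{equation*}
Here $\HH u^{m,n}$ plays the role of a transport velocity, with $\|u^{m,n}_x\|_{L^\infty}+\|\HH u^{m,n}_x\|_{L^\infty}\lesssim\|u^{m,n}\|_{H^s}\le R$ on $[0,\tau^{m,n}_R]$ by Theorem \ref{Local pathwise solution} and the definition \eqref{tau actual solution mnR}, while $\partial_xu_{m,n}$ is smooth with uniformly bounded $\Wlip$-norm. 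Two facts will be used throughout: first, since $u_{m,n}(0)\in H^\infty$ and the blow-up criterion \eqref{Blow-up criterion common} is independent of the Sobolev index, $u^{m,n}$ — hence $w$ — remains in every $H^\sigma$ on $[0,\tau^{m,n}_R]$, so all the norms below are finite; second, by Lemma \ref{lemma uh Hr}, Lemma \ref{lemma ul Hr} and \eqref{u-m n Hs}, $\|u_{m,n}(t)\|_{H^{\sigma_0}}\lesssim n^{\sigma_0-s}+n^{\delta/2-1}$, $\|u_{m,n}(t)\|_{H^{\sigma_1}}\lesssim n^{s-\sigma_0}$ and $\|u_{m,n}(0)\|_{H^{\sigma_1}}\lesssim n^{s-\sigma_0}$ uniformly for $t\in[0,T_l]$.

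To prove \eqref{Error sigma}, I would apply the It\^o formula to $\|w\|_{H^{\sigma_0}}^2$; no mollification is needed here since $\sigma_0<s-1$ forces the drift above into $H^{\sigma_0}$. The two transport contributions are bounded, via the commutator estimate of Lemma \ref{Kato-Ponce commutator estimate}, the embedding $H^{\sigma_0}\hookrightarrow\Wlip$ and an integration by parts, by $C(R)\|w\|_{H^{\sigma_0}}^2$; the error term obeys $2|(E,w)_{H^{\sigma_0}}|\le\|E\|_{H^{\sigma_0}}^2+\|w\|_{H^{\sigma_0}}^2$ with $\|E\|_{H^{\sigma_0}}\lesssim n^{r_s}$ as shown in \eqref{error E estimate}; and the stochastic integral is handled by the BDG inequality. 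For the It\^o correction $\|h(t,u^{m,n})\|_{\LL_2(\U;H^{\sigma_0})}^2$ one uses the exponential bound in Assumption \ref{Assumption-2}: on $[0,\tau^{m,n}_R]$ one has $\|u^{m,n}\|_{H^{\sigma_0}}\le\|u^{m,n}\|_{H^s}\le R$, so ${\rm e}^{-1/\|u^{m,n}\|_{H^{\sigma_0}}}\lesssim_\kappa\|u^{m,n}\|_{H^{\sigma_0}}^\kappa\lesssim\|w\|_{H^{\sigma_0}}^\kappa+\|u_{m,n}\|_{H^{\sigma_0}}^\kappa$ for every $\kappa>0$; choosing $\kappa$ large enough that $\big(n^{\sigma_0-s}+n^{\delta/2-1}\big)^{2\kappa}\lesssim n^{2r_s}$, and noting $\|w\|_{H^{\sigma_0}}\le R+1$ on the interval for $n$ large, we get $\|h(t,u^{m,n})\|_{\LL_2(\U;H^{\sigma_0})}^2\lesssim_R\|w\|_{H^{\sigma_0}}^2+n^{2r_s}$. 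Collecting these bounds, absorbing the half of $\E\sup\|w\|_{H^{\sigma_0}}^2$ produced by BDG, and applying Gr\"onwall's inequality gives $\E\sup_{[0,T_l\wedge\tau^{m,n}_R]}\|w\|_{H^{\sigma_0}}^2\lesssim n^{2r_s}$, which is \eqref{Error sigma}.

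For \eqref{Error 2s minues sigma} I would not use the $w$-equation at the top regularity but instead estimate the two pieces of $\|w\|_{H^{\sigma_1}}\le\|u_{m,n}\|_{H^{\sigma_1}}+\|u^{m,n}\|_{H^{\sigma_1}}$ separately: the first is $\lesssim n^{s-\sigma_0}$ by the construction, and for the second one runs a standard $H^{\sigma_1}$ energy estimate for $u^{m,n}$ itself. Mollifying the transport term as in Proposition \ref{global solution to cut-off problem} and applying It\^o to $\|J_\e u^{m,n}\|_{H^{\sigma_1}}^2$, one controls the transport term by the $\sigma_1$-version of \eqref{Huux u Hs} (legitimate since $\sigma_1>3$), which is $\lesssim R\|u^{m,n}\|_{H^{\sigma_1}}^2$ on $[0,\tau^{m,n}_R]$, and the noise by the first bullet of Assumption \ref{Assumption-2} applied with index $\sigma_1$, giving $\|h(t,u^{m,n})\|_{\LL_2(\U;H^{\sigma_1})}\le l(CR)\|u^{m,n}\|_{H^{\sigma_1}}$; BDG, a routine localization in the $H^{\sigma_1}$-norm, Gr\"onwall's inequality, $\|u^{m,n}(0)\|_{H^{\sigma_1}}=\|u_{m,n}(0)\|_{H^{\sigma_1}}\lesssim n^{s-\sigma_0}$, and Fatou's lemma as $\e\to0$ then yield $\E\sup_{[0,T_l\wedge\tau^{m,n}_R]}\|u^{m,n}\|_{H^{\sigma_1}}^2\lesssim C(R,T_l)n^{2(s-\sigma_0)}$. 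Combining the two bounds proves \eqref{Error 2s minues sigma}. I expect the principal technical points to be, on the one hand, the bookkeeping in the noise term of the $H^{\sigma_0}$ estimate — turning the exponential smallness ${\rm e}^{-1/\|u^{m,n}\|_{H^{\sigma_0}}}$ of Assumption \ref{Assumption-2} into the polynomial smallness needed to close the Gr\"onwall loop — and, on the other hand, justifying that $u^{m,n}$ genuinely propagates $H^{\sigma_1}$-regularity up to $\tau^{m,n}_R$ (so that the high-order It\^o computation is legitimate), which rests on the index-independence of the blow-up criterion \eqref{Blow-up criterion common}.
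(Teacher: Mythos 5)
Your proposal is correct and follows essentially the same route as the paper: the same equation for $w=u_{m,n}-u^{m,n}$, an It\^o/BDG/Gr\"onwall estimate at level $H^{\sigma_0}$ with the transport terms handled by Lemma \ref{Kato-Ponce commutator estimate}, and the $H^{2s-\sigma_0}$ bound obtained by a separate energy estimate for $u^{m,n}$ plus the triangle inequality. The only (harmless) deviation is in the noise term at level $\sigma_0$: you apply the exponential bound of Assumption \ref{Assumption-2} directly to $u^{m,n}$ and convert it to a power of $\|u^{m,n}\|_{H^{\sigma_0}}\le\|w\|_{H^{\sigma_0}}+\|u_{m,n}\|_{H^{\sigma_0}}$, whereas the paper first passes to $h(t,u_{m,n})$ via the Lipschitz property \eqref{assumption 2 for h} and then uses the same $e^{-1/x}\lesssim_\lambda x^\lambda$ device on $\|u_{m,n}\|_{H^{\sigma_0}}$; both close the Gr\"onwall loop with the same choice of exponent.
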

			
			\begin{proof}
				Let $v=v_{m,n}=u_{m,n}-u^{m,n}$. Then $v$ satisfies $v(0)=0$ and
\begin{align*}
	v(t)+\int_0^t\left(\left(\HH v\right)\partial_xu_{m,n}+\left(\HH u^{m,n}\right)\partial_xv\right)\, {\rm d}t'=-\int_0^th(t',u^{m,n})\, {\rm d}\W+\int_0^tE\, {\rm d}t'.
\end{align*}
				where \eqref{Error} is used. 
	For $T_l>0$, we use the It\^{o} formula to find
				\begin{align*}
					\|v(t)\|_{H^{\sigma_0}}^2=\, & -2\int_0^t\left(h(t',u^{m,n})\, {\rm d}\W,v\right)_{H^{\sigma_0}}+2\int_0^t(E,v)_{H^{\sigma_0}}\, {\rm d}t'-2\int_0^t\left(\left(\HH v\right)\partial_xu_{m,n},v\right)_{H^{\sigma_0}}\, {\rm d}t'\\
					&-2\int_0^t\left(\left(\HH u^{m,n}\right)\partial_xv,v\right)_{H^{\sigma_0}}\, {\rm d}t'+\int_0^t\|h(t',u^{m,n})\|_{\LL_2(\U;H^{\sigma_0})}^2\, {\rm d}t'.
				\end{align*}
				Taking supremum with respect to $t\in[0,T_l\wedge\tau^{m,n}_R]$, and then using the BDG inequality yield that for some $\overline{C}>0$,
				\begin{align*}
					&\E\sup_{t\in[0,T_l\wedge\tau^{m,n}_R]}\|v(t)\|_{H^{\sigma_0}}^2\\
					\lesssim  \, &    C\E\left(\int_0^{T_l\wedge\tau^{m,n}_R}\|v\|_{H^{\sigma_0}}^2\|h(t,u^{m,n})\|_{\LL_2(\U;H^{\sigma_0})}^2\, {\rm d}t\right)^{1/2}+2\E\int_0^{T_l\wedge\tau^{m,n}_R}\left|(E,v)_{H^{\sigma_0}}\right|\, {\rm d}t\\
					&+2\E\int_0^{T_l\wedge\tau^{m,n}_R}\left|\left(\left(\HH v\right)\partial_xu_{m,n},v\right)_{H^{\sigma_0}}\right|\, {\rm d}t+2\E\int_0^{T_l\wedge\tau^{m,n}_R}\left|\left(\left(\HH u^{m,n}\right)\partial_xv,v\right)_{H^{\sigma_0}}\right|\, {\rm d}t\\
					&+\E\int_0^{T_l\wedge\tau^{m,n}_R}\|h(t,u^{m,n})\|_{\LL_2(\U;H^{\sigma_0})}^2\, {\rm d}t\\
					\leq  \, &    \frac12\E\sup_{t\in[0,T_l\wedge\tau^{m,n}_R]}\|v(t)\|_{H^{\sigma_0}}^2
					+\overline{C}\E\int_0^{T_l\wedge\tau^{m,n}_R}\left|(E,v)_{H^{\sigma_0}}\right|\, {\rm d}t\\
					&+\overline{C}\E\int_0^{T_l\wedge\tau^{m,n}_R}\left|\left(\left(\HH v\right)\partial_xu_{m,n},v\right)_{H^{\sigma_0}}\right|\, {\rm d}t+\overline{C}\E\int_0^{T_l\wedge\tau^{m,n}_R}\left|\left(\left(\HH u^{m,n}\right)\partial_xv,v\right)_{H^{\sigma_0}}\right|\, {\rm d}t\\
			&+\overline{C}\E\int_0^{T_l\wedge\tau^{m,n}_R}\|h(t,u^{m,n})\|_{\LL_2(\U;H^{\sigma_0})}^2\, {\rm d}t.
				\end{align*}
Recall that \eqref{u-m n Hs} gives $\|u_{m,n}\|_{H^{s}}\lesssim M_{s,\tilde{\phi},\phi,T_l}$ on $t\in[0,T_l\wedge\tau^{m,n}_R]$. Hence we can infer from Assumption \ref{Assumption-2} that for some $\overline{C}>0$
				\begin{align*}
					\|h(t,u^{m,n})\|_{\LL_2(\U;H^{\sigma_0})}^2
					\lesssim \, &  \|h(t,u_{m,n})\|_{\LL_2(\U;H^{\sigma_0})}^2+\|h(t,u_{m,n})-h(t,u^{m,n})\|_{\LL_2(\U;H^{\sigma_0})}^2\\
					\leq \, & \overline{C}\left({\rm e}^\frac{-1}{\|u_{m,n}\|_{H^{\sigma_0}}}\right)^2
					+q(\overline{C}) \|v\|_{H^{\sigma_0}}^2,\ \ t\in[0,T_l\wedge\tau^{m,n}_R]\ \ \p-a.s.,
				\end{align*}
				where $q(\cdot)$ is given in \eqref{assumption 2 for h}. As a result, for any fixed $s>3$, by applying Lemmas \ref{lemma uh Hr} and \ref{lemma ul Hr} again, we can pick $\lambda>2(1+\frac{s-\sigma_0-1}{2-\delta})$ to derive
				\begin{align*}
					\|h(t,u^{m,n})\|_{\LL_2(\U;H^{\sigma_0})}^2
					\lesssim \, &  \|u_{m,n}\|^{2\lambda}_{H^{\sigma_0}}
					+ \|v\|_{H^{\sigma_0}}^2\\
					\lesssim \, &  \left(n^{-s+\sigma_0}+n^{\frac{\delta}{2}-1}\right)^{2\lambda}
					+  \|v\|_{H^{\sigma_0}}^2 
					\lesssim\,  n^{2r_s}+  \|v\|_{H^{\sigma_0}}^2,	\ \ t\in[0,T_l\wedge\tau^{m,n}_R]\ \ \p-a.s.
				\end{align*}
				Via \eqref{error E estimate}, we have
				\begin{align*}
					2\left|(E,v)_{H^{\sigma_0}}\right|\leq 2\|E\|_{H^{\sigma_0}}\|v\|_{H^{\sigma_0}}\lesssim\|E\|_{H^{\sigma_0}}^2+\|v\|_{H^{\sigma_0}}^2\lesssim  n^{2r_s}+\|v\|_{H^{\sigma_0}}^2.
				\end{align*}
				Using Lemma \ref{Kato-Ponce commutator estimate},  \eqref{u-m n Hs}, \eqref{H Je Hs norm}, integration by parts, and the embedding $H^{s}\hookrightarrow H^{\sigma_0}\hookrightarrow\Wlip$, we obtain that the almost surely
				\begin{align*}
					\left|\left(\left(\HH v\right)\partial_xu_{m,n},v\right)_{H^{\sigma_0}}\right|
					\lesssim\, \|\HH v\|_{H^{\sigma_0}}\|u_{m,n}\|_{H^{\sigma_0+1}}\|v\|_{H^{\sigma_0}}
					\lesssim\,  \|u_{m,n}\|_{H^{s}}\|v\|_{H^{\sigma_0}}^2 \lesssim\|v\|_{H^{\sigma_0}}^2,	\ t\in[0,T_l\wedge\tau^{m,n}_R],
				\end{align*}
				and 
				\begin{align*}
					&\left|\left(\left(\HH u^{m,n}\right)\partial_xv,v\right)_{H^{\sigma_0}}\right|\\
					=\,  & \left|([D^{\sigma_0},\HH u^{m,n}]\partial_xv,D^{\sigma_0}v)_{L^2}
					+(\HH u^{m,n}D^{\sigma_0}\partial_xv,D^{\sigma_0}v)_{L^2}\right|\\
					\lesssim \, &  \left(\|D^{\sigma_0}\HH u^{m,n}\|_{L^2}\|\partial_xv\|_{L^\infty}+\|\partial_x\HH u^{m,n}\|_{L^\infty}\|D^{\sigma_0-1}\partial_xv\|_{L^2}\right)\|v\|_{H^{\sigma_0}}+\|\partial_{x}\HH u^{m,n}\|_{L^\infty}\|v\|_{H^{\sigma_0}}^2\\
					\lesssim \, &  \|\HH u^{m,n}\|_{H^s}\|v\|_{H^{\sigma_0}}^2
					\lesssim\|u^{m,n}\|_{H^s}\|v\|_{H^{\sigma_0}}^2\lesssim\|v\|_{H^{\sigma_0}}^2,	\ t\in[0,T_l\wedge\tau^{m,n}_R].
				\end{align*}
				To sum up, we obtain that 
				\begin{align*}
					\E\sup_{t\in[0,T_l\wedge\tau^{m,n}_R]}\|v(t)\|_{H^{\sigma_0}}^2
					\lesssim T_ln^{2r_s}+\int_0^{T_l}\E\sup_{t'\in[0,t\wedge\tau^{m,n}_R]}\|v(t')\|_{H^{\sigma_0}}^2\, {\rm d}t.
				\end{align*}
				Using the Gr\"{o}nwall inequality, we obtain \eqref{Error sigma}. 
				
				Now we prove \eqref{Error 2s minues sigma}. To this end, we first notice that $2s-\sigma_0>3$ and $u_{m,n}$ is the unique solution to \eqref{SCCF m n}. Then, similar to \eqref{u Hs estimate}, we can use \eqref{tau actual solution mnR} and Assumption \ref{Assumption-2} to find for each fixed $n\in\N$ that
				\begin{align*}
					\E\sup_{t\in[0,T_l\wedge\tau^{m,n}_R]}\|u^{m,n}\|_{H^{2s-\sigma_0}}^2
					\lesssim \E\|u_{m,n}(0)\|_{H^{2s-\sigma_0}}^2
					+ \int_0^{T_l}\E\sup_{t'\in[0,t\wedge\tau^{m,n}_R]}\|u^{m,n}\|_{H^{2s-\sigma_0}}^2\, {\rm d}t.
				\end{align*}
				Using the Gr\"{o}nwall inequality and Lemmas \ref{lemma ul Hr} and \ref{lemma uh Hr}, we have
				\begin{align*}
					&\E\sup_{t\in[0,T_l\wedge\tau^{m,n}_R]}\|u^{m,n}\|_{H^{2s-\sigma_0}}^2
					\lesssim \E\|u_{m,n}(0)\|_{H^{2s-\sigma_0}}^2
					\lesssim (n^{\frac\delta2-1}+n^{s-\sigma_0})^2\lesssim n^{2s-2\sigma_0},\ \ n\geq1.
				\end{align*}
				Hence, by Lemmas \ref{lemma ul Hr} and \ref{lemma uh Hr} again, we arrive at 
				\begin{align*}
					&\E\sup_{t\in[0,T_l\wedge\tau^{m,n}_R]}\|u^{m,n}-u_{m,n}\|_{H^{2s-\sigma_0}}^2\\
					\leq\ & 2\E\sup_{t\in[0,T_l\wedge\tau^{m,n}_R]}\|u^{m,n}\|_{H^{2s-\sigma_0}}^2
					+2\E\sup_{t\in[0,T_l\wedge\tau^{m,n}_R]}\|u_{m,n}\|_{H^{2s-\sigma_0}}^2 
					\leq\  Cn^{2s-2\sigma_0},\ \ n\geq1.
				\end{align*}
				Therefore, we complete the proof.
			\end{proof}

			\subsection{Concluding the proof of Theorem \ref{Weak instability}}\label{conclude:weak:theorem}
			To begin with, we have the following property:
			\begin{Lemma}\label{exiting time infty lemma}
				Let Assumption \ref{Assumption-2} hold true. Suppose that for some $R_0\gg 1$, the $R_0$-exiting time of the zero solution to \eqref{SCCF problem} is strongly stable. Then we have
				\begin{equation}\label{tau actual solution mnR infty}
					\lim_{n\rightarrow\infty}\tau^{m,n}_{R_0}=\infty\ \p-a.s.
				\end{equation}	
			\end{Lemma}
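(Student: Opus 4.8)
The plan is to read \eqref{tau actual solution mnR infty} directly off the strong stability hypothesis, applied to the sequence of \emph{deterministic} initial data $u_{m,n}(0,\cdot)$. First I would record that, since Assumption \ref{Assumption-2} forces $h(t,0)=0$, the constant function $u\equiv0$ is the unique pathwise solution of \eqref{SCCF problem} with datum $u_0=0$, and, by the convention $\inf\emptyset=\infty$ in force throughout the paper, its $R_0$-exiting time is $\tau^{R_0}=\inf\{t\ge0:\|0\|_{H^s}>R_0\}=\infty$ (here $R_0>0$). Thus the assumed strong stability of the $R_0$-exiting time of the zero solution says precisely: whenever a sequence of $\mathcal F_0$-measurable data $u_{0,n}\to 0$ in $H^{s'}$ for every $s'<s$ $\p$-a.s., the associated $R_0$-exiting times converge to $\infty$ $\p$-a.s.

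Next I would verify that $u_{m,n}(0,\cdot)\to0$ in $H^{s'}$ for every $s'<s$. Writing
\[
u_{m,n}(0,x)=-\HH\!\left(mn^{-1}\tilde{\phi}\!\left(\tfrac{x}{n^{\delta}}\right)\right)+n^{-\frac{\delta}{2}-s}\phi\!\left(\tfrac{x}{n^{\delta}}\right)\cos(nx),
\]
the low-frequency term is controlled by the computation in Step 1 of the proof of Lemma \ref{lemma ul Hr} together with \eqref{H Je Hs norm}, which gives $\bigl\|\HH(mn^{-1}\tilde{\phi}(\cdot/n^{\delta}))\bigr\|_{H^{s'}}\lesssim |m|\,n^{\frac{\delta}{2}-1}\to0$ since $\delta<2$, while the high-frequency term is handled by Lemma \ref{lemma uh Hr}, which yields $\bigl\|\phi(\cdot/n^{\delta})\cos(nx)\bigr\|_{H^{s'}}\sim\tfrac{1}{\sqrt2}\|\phi\|_{L^2}\,n^{\frac{\delta}{2}+s'}$, hence $\bigl\|n^{-\frac{\delta}{2}-s}\phi(\cdot/n^{\delta})\cos(nx)\bigr\|_{H^{s'}}\sim\tfrac{1}{\sqrt2}\|\phi\|_{L^2}\,n^{s'-s}\to0$ because $s'<s$. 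Since these data are deterministic, the convergence in $H^{s'}$ for every $s'<s$ holds trivially $\p$-a.s.

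Then I would conclude: because $u^{m,n}$ is by construction the unique solution of \eqref{SCCF problem} with datum $u_{m,n}(0,\cdot)$, and because for $n$ large $\|u^{m,n}(0)\|_{H^s}=\|u_{m,n}(0)\|_{H^s}<R_0$, so that the infimum over $t>0$ in \eqref{tau actual solution mnR} agrees with the infimum over $t\ge0$ in Definition \ref{Definition stability of exiting time}, the strong stability hypothesis gives $\tau^{m,n}_{R_0}\to\tau^{R_0}=\infty$ $\p$-a.s., which is \eqref{tau actual solution mnR infty}; the same argument applies for each $m\in\{-1,1\}$.

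There is essentially no analytic difficulty in this lemma; the only point that must be handled with care — and it is exactly the feature the whole theorem is built to exploit — is that $u_{m,n}(0,\cdot)$ does \emph{not} converge to $0$ in the endpoint space $H^s$ (one has $\|u_{m,n}(0,\cdot)\|_{H^s}\to\tfrac{1}{\sqrt2}\|\phi\|_{L^2}\neq0$), so one genuinely needs the \emph{strong} stability assumption, i.e. convergence of the exiting times under the weaker hypothesis $u_{0,n}\to u_0$ in $H^{s'}$ for all $s'<s$; plain stability would be useless here.
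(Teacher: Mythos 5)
Your proposal is correct and follows essentially the same route as the paper's proof: identify $u\equiv 0$ as the unique solution with zero datum (so its $R_0$-exiting time is $\infty$), verify $u_{m,n}(0)\to 0$ in $H^{s'}$ for every $s'<s$ almost surely, and invoke the strong stability hypothesis. You merely fill in the norm computations (and the observation that convergence fails in $H^s$ itself) that the paper leaves implicit.
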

			\begin{proof}
				By Assumption \ref{Assumption-2}, the unique solution with zero initial data to \eqref{SCCF problem} is zero.
				Now we notice that for all $s'<s$, $\lim_{n\rightarrow\infty}\|u_{m,n}(0)\|_{H^{s'}}=\lim_{n\rightarrow\infty}\|u_{m,n}(0)-0\|_{H^{s'}}=0$ and the $R_0$-exiting time of the zero solution is $\infty$. Then the assumption that $R_0$-exiting time of the zero solution to \eqref{SCCF problem} is strongly stables immediately implies \eqref{tau actual solution mnR infty}.
			\end{proof}
			
			\begin{proof}[Proof of Theorem \ref{Weak instability}]
				We only need to show that  if  the $R_0$-exiting time is strongly stable at the zero solution  for some $R_0\gg1$, then $\{u^{-1,n}\}$ and $\{u^{1,n}\}$ are two sequences of pathwise solutions such that \eqref{tau 1 2 n}, \eqref{sup u}, \eqref{same initail data} and \eqref{sup sin t} are satisfied.
				
				For each $n>1$ and for fixed $R_0\gg 1$, Lemmas \ref{lemma ul Hr}, \ref{lemma uh Hr} and \eqref{tau actual solution mnR} give $\p\{\tau^{m,n}_{R_0}>0\}=1$, and  Lemma \ref{exiting time infty lemma} implies \eqref{tau 1 2 n}. Then, it follows from		
				Theorem \ref{Local pathwise solution} and \eqref{tau actual solution mnR} that $u^{m,n}\in C([0,\tau^{m,n}_{R_0}];H^s)$ $\p-a.s.$ and \eqref{sup u} holds true. 
				Next, we check \eqref{same initail data}.  By interpolation, we have
				\begin{align*}
					\, &\E\sup_{t\in[0,T_l\wedge\tau^{m,n}_{R_0}]}\|u_{m, n}-u^{m, n}\|_{H^{s}}\\
					\lesssim\, &
					\left( \E\sup_{t\in[0,T_l\wedge\tau^{m,n}_{R_0}]}\|u_{m, n}-u^{m, n}\|_{H^{\sigma_0}}\right)^{\frac12}
					\left( \E\sup_{t\in[0,T_l\wedge\tau^{m,n}_{R_0}]}\|u_{m, n}-u^{m, n}\|_{H^{2s-\sigma_0}}\right)^{\frac12}\\
					\lesssim\, &
					\left( \E\sup_{t\in[0,T_l\wedge\tau^{m,n}_{R_0}]}\|u_{m, n}-u^{m, n}\|^2_{H^{\sigma_0}}\right)^{\frac14}
					\left( \E\sup_{t\in[0,T_l\wedge\tau^{m,n}_{R_0}]}\|u_{m, n}-u^{m, n}\|^2_{H^{2s-\sigma_0}}\right)^{\frac14}.
				\end{align*} 
			Combining Lemma \ref{difference estimate lemma} and the above estimate yields
				\begin{align}
					\E\sup_{t\in[0,T_l\wedge\tau^{m,n}_{R_0}]}
					\|u_{m, n}-u^{m, n}\|_{H^{s}}
						\lesssim\, & n^{\frac{1}{4}\cdot 2r_{s}}\cdot n^{\frac{1}{4}\cdot(2s-2\sigma_0)}=n^{r'_s},\label{u difference r's}
				\end{align}
				where $r_s$ is defined by \eqref{rs} and
				\begin{equation*}
					0>r'_s=r_s\cdot \frac{1}{2}+(s-\sigma_0)\cdot\frac{1}{2}=\frac{\delta-1}{2}.
				\end{equation*}
				Since $r'_s<0$, we can deduce that
				\begin{align}
					\lim_{n\rightarrow\infty}
					\E\sup_{t\in[0,T_l\wedge\tau^{m,n}_{R_0}]}\|u_{m, n}-u^{m, n}\|_{H^{s}}
					=0.\label{u difference tends to 0}
				\end{align}
	Since $\delta<1$, we have
				\begin{align*} 
					\|u^{-1,n}(0)-u^{1,n}(0)\|_{H^{s}}^2
					=\, &\|u_{-1,n}(0)-u_{1,n}(0)\|_{H^{s}}^2\\
					=\,& \left\|2\HH\left(n^{-1}\tilde{\phi}\left(\frac{x}{n^{\delta}}\right)\right)\right\|_{H^{s}}^2
					\lesssim \left\|n^{-1}\tilde{\phi}\left(\frac{x}{n^{\delta}}\right)\right\|_{H^{s}}^2
					\lesssim n^{\frac{\delta}{2}-1}\|\tilde{\phi}\|_{H^s}\rightarrow 0, {~\rm as~}n\rightarrow \infty,
				\end{align*}
				which implies that \eqref{same initail data} holds true.
				
				Now we prove \eqref{sup sin t}. Let $T_l>0$ be given in Lemma \ref{lemma ul Hr}. We can infer from \eqref{u difference tends to 0} that
				\begin{align*}
					&\liminf_{n\rightarrow \infty}
					\E\sup_{t\in[0,T_l\wedge\tau^{-1,n}_{R_0}\wedge\tau^{1,n}_{R_0}]}
					\|u^{-1,n}(t)-u^{1,n}(t)\|_{H^s}\\
					\gtrsim \,  & 
					\liminf_{n\rightarrow \infty}
					\E\sup_{t\in[0,T_l\wedge\tau^{-1,n}_{R_0}\wedge\tau^{1,n}_{R_0}]}
					\|u_{-1,n}(t)-u_{1,n}(t)\|_{H^s}\\
					&-\lim_{n\rightarrow \infty}
					\E\sup_{t\in[0,T_l\wedge\tau^{-1,n}_{R_0}\wedge\tau^{1,n}_{R_0}]}
					\|u_{-1,n}(t)-u^{-1,n}(t)\|_{H^s}\\
					&-\lim_{n\rightarrow \infty}\E\sup_{t\in[0,T_l\wedge\tau^{-1,n}_{R_0}\wedge\tau^{1,n}_{R_0}]}
					\|u_{1,n}(t)-u^{1,n}(t)\|_{H^s}\\
					\gtrsim \,  & \liminf_{n\rightarrow \infty}
					\E\sup_{t\in[0,T_l\wedge\tau^{-1,n}_{R_0}\wedge\tau^{1,n}_{R_0}]}
					\|u_{-1,n}(t)-u_{1,n}(t)\|_{H^s}.
				\end{align*}
			Then it follows from the construction of $u_{m,n}$, Lemmas \ref{lemma ul Hr}, \ref{lemma uh Hr} and \ref{exiting time infty lemma}, and Fatou's lemma that
				\begin{align*}
					&\liminf_{n\rightarrow \infty}
					\E\sup_{t\in[0,T_l\wedge\tau^{-1,n}_{R_0}\wedge\tau^{1,n}_{R_0}]}
					\|u_{-1,n}(t)-u_{1,n}(t)\|_{H^s}\notag\\
					=\,  & \liminf_{n\rightarrow \infty}
					\E\sup_{t\in[0,T_l\wedge\tau^{-1,n}_{R_0}\wedge\tau^{1,n}_{R_0}]}
					\left\|-2n^{-\frac{\delta}{2}-s}\phi\left(\frac{x}{n^{\delta}}\right)\sin(nx)\sin(t)
					+[u_{l,-1,n}(t)-u_{l,1,n}(t)]\right\|_{H^s}\notag\\
					\gtrsim \,  & \liminf_{n\rightarrow \infty}\E\sup_{t\in[0,T_l\wedge\tau^{-1,n}_{R_0}\wedge\tau^{1,n}_{R_0}]}n^{-\frac{\delta}{2}-s}\Big\|\phi\left(\frac{x}{n^{\delta}}\right)\sin(nx)\Big\|_{H^s}|\sin t|-\liminf_{n\rightarrow \infty} n^{\frac{\delta}{2}-1}\notag\\
					\gtrsim \,  &  \sup_{t\in[0,T_l]}|\sin t|,
				\end{align*}
				which is \eqref{sup sin t}. The proof is therefore completed.
			\end{proof}


		\end{document}